\theoremstyle{plain}
\newtheorem{thm}{Theorem}[section]
\newtheorem*{thm*}{Theorem}
\newtheorem*{mainthm*}{Main theorem}
\newtheorem*{lem*}{Lemma}
\newtheorem{cor}[thm]{Corollary}
\newtheorem{prop}[thm]{Proposition}
\newtheorem{lem}[thm]{Lemma}
\theoremstyle{definition}
\newtheorem{defn}[thm]{Definition}%[section]
\newtheorem{exmp}[thm]{Example}
\theoremstyle{remark}
\newenvironment{claim}[1]{\vskip 1mm\par\noindent\underline{Claim\space#1:}}{}
\newenvironment{claimproof}[1]{\vskip
  1mm\par\noindent\underline{Proof:}\space#1}{\hfill $\blacksquare$
  \vskip 1mm}
\DeclareMathOperator{\Lip}{Lip}
\DeclareMathOperator{\lip}{lip}
\begin{document}
\title{Duality of Lipschitz-free spaces over ultrametric spaces}

% \date{\today}
\author[T.~A.~Abrahamsen]{Trond A.~Abrahamsen}
\address[T.~A.~Abrahamsen]{Department of Mathematics, University of
  Agder, Post\-boks 422, 4604 Kristiansand, Norway.}
\email{trond.a.abrahamsen@uia.no}
\urladdr{http://home.uia.no/trondaa/index.php3}

\author[V.~Lima]{Vegard Lima}
\address[V.~Lima]{Department of Mathematics, University of
  Agder, Postboks 422, 4604 Kristiansand, Norway.}
\email{vegard.lima@uia.no}

\author[A.~Ostrak]{Andre Ostrak}
\address[A.~Ostrak]{Department of Mathematics, University of
  Agder, Postboks 422, 4604 Kristiansand, Norway.}
\email{andre.ostrak@uia.no}

\begin{abstract}
  We give a metric characterisation of when the Lipschitz-free space over
  a separable ultrametric space is a dual Banach space.
  In the case where the Lipschitz-free space has a predual,
  we show that this predual is $M$-embedded if and only if
  the metric space is proper.
  We show that for ultrametric spaces, the little Lipschitz space is
  always an $M$-ideal in the corresponding space of Lipschitz
  functions, and we show that this is not the case for metric spaces in
  general, thus answering a question posed by Werner in the negative.
  Finally, we show that the space of Lipschitz functions of an
  ultrametric space contains a strongly extreme point.
\end{abstract}

\subjclass[2020]{46B04, 46B10, 46B20, 54E35}
\keywords{Lipschitz-free space, duality, ultrametric space, M-ideal, proper metric space}

\maketitle

\section{Introduction}

Let $M$ be a pointed metric space, i.e. a metric space with a fixed point $0$.
We denote by $\Lip_0(M)$ the Banach space of all Lipschitz functions $f\colon M\to \mathbb{R}$ satisfying $f(0)=0$, with the norm
\begin{equation*}
  \|f\|=\sup\left\{\frac{|f(x)-f(y)|}{d(x,y)}\colon x,y\in M,\ x\neq
    y\right\}.
\end{equation*}
Denote by $\delta\colon M\to \Lip_0(M)^*$ the canonical isometric
embedding of $M$ into $\Lip_0(M)^*$, given by
$\delta\mapsto \delta_x$, where $\delta_x(f)=f(x)$ for $f\in \Lip_0(M)$.
The \emph{Lipschitz-free space} over $M$, denoted by $\mathcal{F}(M)$,
is the closed linear span of $\delta(M)$ in $\Lip_0(M)^*$.
It is known that $\mathcal{F}(M)^*=\Lip_0(M)$.

The study of when the Lipschitz-free space $\mathcal{F}(M)$ 
is a dual space has garnered a lot of attention 
over the years by several authors
\cite{MR4745886, MR4402669, MR3376824, MR3778926, MR3693571, MR2068975, MR1386169, MR3792558}. 
However, outside of a few examples, 
this line of study has mostly been confined to proper metric spaces $M$. 
An omnibus theorem given in~\cite[Theorem~3.2]{MR4402669} 
shows that for a proper metric space $M$, 
the Lipschitz-free space $\mathcal{F}(M)$ is a dual 
if and only if $M$ is purely $1$-unrectifiable. 
One of the few results extending outside the proper spaces
was given by Weaver in~\cite{MR1386169},
who showed that the Lipschitz-free space $\mathcal{F}(M)$
of certain rigidly locally compact spaces $M$ is a dual.
The first aim of this paper is to break the
confines of proper metric spaces and
characterise the duality of $\mathcal{F}(M)$ 
for all separable ultrametric spaces $M$.

The duality and related properties of Lipschitz-free spaces over
ultrametric spaces have previously been studied in \cite{MR3376824},
where it was shown that the Lipschitz-free space over an ultrametric
space has the metric approximation property, and that $\mathcal{F}(M)$ is a dual space if $M$
is a proper ultrametric space.
We borrow the machinery for the proof of our main theorem
from \cite{MR3530906}, where it was shown that
if $M$ is a separable ultrametric space, then the space
$\mathcal{F}(M)$ has a monotone Schauder basis and is isomorphic to
$\ell_1$.

An ultrametric space $M$ is said to be spherically complete (see,
e.g. ~\cite{MR2810332}) if any nested sequence of balls in $M$ has
nonempty intersection. Our main result is the following.

\begin{mainthm*}
  Let $M$ be a complete separable ultrametric space.
  The following are equivalent:
  \begin{enumerate}
  \item\label{main:i}
    $\mathcal{F}(M)$ is a dual Banach space;
  \item\label{main:ii}
    $\mathcal{F}(M)$ is $1$-complemented in its bidual
    $\mathcal{F}(M)^{**}$;
  \item\label{main:iii}
    $M$ is spherically complete.
  \end{enumerate}
\end{mainthm*}

\noindent 
More generally, we show that the implications
\ref{main:i}$\Rightarrow$\ref{main:ii}$\Rightarrow$\ref{main:iii}
hold even if $M$ is not separable.
Section~\ref{sec:non-dual-lipschitz} is dedicated
to the proof of \ref{main:ii}$\Rightarrow$\ref{main:iii}.
The rest of the proof of the Main theorem comes
in the first half of Section~\ref{sec:dual-lipschitz-free},
where we construct a predual $Y$ for $\mathcal{F}(M)$
when $M$ is separable and spherically complete.
In Section~\ref{sec:dual-lipschitz-free} we also show that
our predual $Y$ coincides with the predual given
by Dalet \cite{MR3376824} when $M$ is a proper
ultrametric space.

The second half of Section~\ref{sec:dual-lipschitz-free},
we study $M$-ideals.
We denote by $\lip^u_0(M)$ the Banach space of all uniformly locally
flat functions in $\Lip_0(M)$ that are also flat at infinity.
See the Preliminaries section below for definitions.
It is known that if $M$ is any proper metric space, then for any $\varepsilon > 0$,
$\lip_0^u(M)$ is $(1+\varepsilon)$-isometric to a subspace of $c_0$.
This was shown by Kalton \cite[Theorem~6.6]{MR2068975}
for compact metric spaces and extended to proper metric spaces
by Dalet \cite[Lemma~3.9]{MR3376824}.
It follows that $\lip_0^u(M)$ is an $M$-ideal in $\Lip_0(M)$
when $M$ is a proper metric space such that
$\lip_0^u(M)$ is a predual of $\mathcal{F}(M)$.
We show that for any ultrametric space $M$, $\lip_0^u(M)$
is an $M$-ideal in $\Lip_0(M)$.
For spherically complete $M$, our predual $Y$ of 
$\mathcal{F}(M)$ is an $M$-ideal in its bidual $\Lip_0(M)$
if and only if $M$ is proper.

In~\cite{MR4472717}, Werner asked whether for all compact
metric spaces $M$, the space $\lip_0^u(M)$ is an $M$-ideal of
$\Lip_0(M)$.
In Example~\ref{exmp:noWerner}, we answer this question negatively by giving an example of
a (non-ultrametric) compact metric space for which this is not the case.

Finally, in Section~\ref{sec:strongly-extr-points},
we show that for an ultrametric space $M$,
the space $\Lip_0(M)$ has a strongly extreme point and is therefore
not locally almost square.

\section{Preliminaries}
\label{sec:preliminaries}

We study real Banach spaces only.
Given a real Banach space $X$, we denote the closed unit ball, the
unit sphere, and the dual space of $X$ by $B_X$, $S_X$, and $X^*$,
respectively.
Recall that if $X$ is a Banach space and $E$ is a subspace of $X^*$,
then $E$ is \emph{$a$-norming} where $a \ge 1$ if
\begin{equation*}
  \|x\| \le a \sup_{x^* \in B_E} |x^*(x)|
  \qquad \text{for all $x \in X$}.
\end{equation*}
If $a = 1$, we call $E$ \emph{norming}.
That $E$ is $a$-norming is equivalent to requiring that $B_{X^*}$
is contained in the weak$^*$-closure of $a B_E$.

Let $Y$ be a subspace of a Banach space $X$
with annihilator $Y^\perp$ in $X^*$.
Recall that $Y$ is an \emph{$M$-ideal} in $X$
if there is a projection $P$ on $X^*$
with $\ker P = Y^\perp$ such that for all $x^* \in X^*$,
\begin{equation*}
  \|x^*\| = \|Px^*\| + \|x^* - Px^*\|.
\end{equation*}
A Banach space $X$ is \emph{$M$-embedded} if
$X$ is an $M$-ideal in its bidual $X^{**}$.

Throughout the paper, $M$ will stand for a metric space
with metric $d$.
We will assume without mention that $M$ is pointed, that is,
there is a distinguished point $0 \in M$.
Given a metric space $M$, we denote
by $B(x,r)$ the \emph{open} ball in $M$ centered at $x$
with radius $r$, i.e.
$B(x,r) = \{ y \in M : d(x,y) < r \}$.
The corresponding closed ball is
$B[x,r] = \{ y \in M : d(x,y) \le r \}$.
Recall that a metric space $M$ is \emph{proper}
if every closed ball is compact.

For $x,y\in M$ with $x\neq y$, we define a \emph{molecule} $m_{x,y}$
as the norm one element
$\tfrac{\delta_x-\delta_y}{d(x,y)}$ in $\mathcal{F}(M)$.

\subsection{Preliminaries on little Lipschitz spaces.}
Unfortunately, there is no uniform agreement in the literature on
what it means for a Lipschitz function to be
flat at infinity or what the space of little Lipschitz functions
should be, so let us start with some definitions.

\begin{defn}
  Let $M$ be a metric space and let $f \in \Lip_0(M)$.
  Then
  \begin{enumerate}
  \item\label{d:flat_lf}
    $f$ is \emph{locally flat} if for every $p \in M$
    and every $\varepsilon > 0$, there exists $\delta > 0$
    such that
    \begin{equation*}
      x,y \in B(p,\delta)
      \implies
      |f(x) - f(y)| < \varepsilon d(x,y);
    \end{equation*}
  \item\label{d:flat_ulf}
    $f$ is \emph{uniformly locally flat} if for every
    $\varepsilon > 0$, there exists $\delta > 0$ such that
    \begin{equation*}
      d(x,y) < \delta
      \implies
      |f(x) - f(y)| < \varepsilon d(x, y);
    \end{equation*}
  \item\label{d:flat_fai}
    $f$ is \emph{flat at infinity} if
    \begin{equation*}
      \lim_{r \to \infty}
      \sup_{\overset{x\;\text{or}\;y \notin B[0,r]}{x \neq y}}
      \frac{f(x) - f(y)}{d(x,y)} = 0.
    \end{equation*}
  \end{enumerate}
\end{defn}
Note that $f \in \Lip_0(M)$ is flat at infinity
if and only if
\begin{equation*}
  \lim_{r \to \infty} \|f|_{M \setminus B[0,r]}\| = 0.
\end{equation*}
The proof of the non-trivial implication
is similar to the one in
Lemma~4.18 in Weaver's book \cite{MR3792558}.
Note, however, that Weaver's definition of flat at infinity
is slightly different from ours in that Weaver requires 
$f$ to be flat outside of a compact set.
If $M$ is proper (aka. boundedly compact), then
the two definitions agree.
The above widely used definition of flat at infinity
is more suitable for our purposes when dealing with
non-proper metric spaces.

For a metric space $M$, we define
\begin{equation*}
  \lip_0^u(M) := \{ f \in \Lip_0(M) :
  f\ \text{is uniformly locally flat and flat at infinity}
  \}.
\end{equation*}
If $M$ is proper, $\lip_0^u(M)$ coincides with the space
$\lip_0(M)$ as defined in \cite[Definition~3.1]{MR4402669}
or \cite[Definition~4.15]{MR3792558}.
Since our metric spaces will not all be proper,
we use the notation $\lip_0^u(M)$ to highlight
that we assume the little Lipschitz functions
to be \emph{uniformly} locally flat.

\subsection{Preliminaries on ultrametric spaces.}
Recall that a metric space $M$ with metric $d$ is called
\emph{ultrametric} if it satisfies the strong triangle inequality,
i.e. for all $x,y,z\in M$,
\begin{equation*}
  d(x,y)\leq \max\{d(x,z),d(z,y)\}.
\end{equation*}

Let us list some results about ultrametric spaces,
which we will use throughout the paper.
The proof of the following lemma is straightforward,
see e.g. \cite[Proposition~1.3.28]{MR3931701}.

\begin{lem}\label{lem: prop_ultram}
  Let $M$ be an ultrametric space, let $x,y,z \in M$, and let $r>0$.
  \begin{enumerate}
  \item\label{item:1}
    If $d(x,z) \neq d(y,z)$,
    then $d(x,y) = \max\{d(x,z),d(y,z)\}$;
  % \item\label{item:2}
  %   If $y\in B(x,r)$ and $z\not\in B(x,r)$, then $d(y,z)\geq r$;
  \item\label{item:3}
    If $d(x,y)<r$, then $B(x,r) = B(y,r)$;
  \item\label{item:4}
    $B(x,r)$ is both open and closed.
  \end{enumerate}
\end{lem}

We define a sequence of partitions for an ultrametric space.
\begin{lem}\label{lem: partitioning}
    Let $M$ be an ultrametric space and let $q\in (0,1)$. There exist subsets $C_n$ of $M$, $n\in \mathbb{Z}$, satisfying for all $n\in \mathbb{Z}$,
    \begin{enumerate}[label=(\alph*)]
        \item\label{item:a} $0\in C_n$;
        \item\label{item:b} $C_n\subset C_{n+1}$;
        \item\label{item:c} $d(x,y)\geq q^n$ for all $x,y\in C_n$ with $x\neq y$;
        \item\label{item:d} $M = \bigcup_{x\in C_n}B(x,q^n)$.
    \end{enumerate}
\end{lem}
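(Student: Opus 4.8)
The plan is to exploit the fact that in an ultrametric space the balls of a fixed radius partition the space. Fix $n\in\mathbb{Z}$. By Lemma~\ref{lem: prop_ultram}\ref{item:3} and \ref{item:4}, any two balls $B(x,q^n)$ and $B(y,q^n)$ are either equal or disjoint: if they share a point $z$, then $d(x,z)<q^n$ and $d(y,z)<q^n$, so $B(x,q^n)=B(z,q^n)=B(y,q^n)$. Since every point lies in its own ball, the family $\mathcal{P}_n:=\{B(x,q^n):x\in M\}$ is a partition of $M$. Moreover, because $q<1$ we have $q^{n+1}<q^n$, hence $B(x,q^{n+1})\subseteq B(x,q^n)$, so the partition $\mathcal{P}_{n+1}$ refines $\mathcal{P}_n$. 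The next observation is that a set $C_n\subseteq M$ satisfies \ref{item:c} and \ref{item:d} precisely when it is a transversal (a system of distinct representatives) of $\mathcal{P}_n$: by the equivalence $d(x,y)\ge q^n$ iff $x,y$ lie in different cells, \ref{item:c} says that $C_n$ contains at most one point of each cell, while \ref{item:d} says that the cells $B(x,q^n)$ with $x\in C_n$ exhaust $M$, i.e. $C_n$ meets every cell.

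Thus the task reduces to choosing nested transversals $C_n$ of the refining partitions $\mathcal{P}_n$, all containing $0$. I would first pick $C_0$ to be any transversal of $\mathcal{P}_0$ with $0\in C_0$, letting $0$ represent its own cell and using the axiom of choice for the remaining cells. Then I build $C_n$ for $n>0$ \emph{upward}: given a transversal $C_n$ of $\mathcal{P}_n$, each cell of $\mathcal{P}_n$ splits into cells of $\mathcal{P}_{n+1}$, and I let $C_{n+1}$ retain every $x\in C_n$ as the representative of the unique cell of $\mathcal{P}_{n+1}$ containing $x$, while choosing fresh representatives for the other finer cells. This gives $C_n\subseteq C_{n+1}$, a transversal of $\mathcal{P}_{n+1}$, still containing $0$. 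For $n<0$ I proceed \emph{downward}: given a transversal $C_n$ of $\mathcal{P}_n$, each cell $B\in\mathcal{P}_{n-1}$ is a union of cells of $\mathcal{P}_n$, so $C_n\cap B\neq\emptyset$; I define $C_{n-1}$ by selecting one element of $C_n\cap B$ for each such $B$, taking $0$ for the cell containing $0$. Then $C_{n-1}\subseteq C_n$ is a transversal of $\mathcal{P}_{n-1}$ containing $0$.

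With this construction, \ref{item:a} and \ref{item:b} hold by design, and \ref{item:c}, \ref{item:d} hold because each $C_n$ is a transversal of $\mathcal{P}_n$, as noted above. The main obstacle is the bi-infinite index set: since $\mathbb{Z}$ has no least element, one cannot induct in a single direction, so the nesting must be arranged in both directions from the anchor level $n=0$. The upward step is an easy refinement (keep the old representatives, add new ones), whereas the downward step must draw the coarser representatives from within the already-fixed finer transversal $C_n$; this is possible exactly because every coarser cell contains a finer cell and therefore meets $C_n$. Keeping $0$ in every $C_n$ is automatic once $0$ is always chosen to represent its own cell.
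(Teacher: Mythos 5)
Your proof is correct and is essentially the paper's argument in different clothing: a transversal of the partition $\mathcal{P}_n=\{B(x,q^n):x\in M\}$ is exactly a maximal $q^n$-separated set containing one point per ball, your upward refinement step matches the paper's choice of the sets $A_x$ inside each $B(x,q^n)$, and your downward selection from $C_n\cap B$ is the paper's choice of representatives for the equivalence $x\equiv y\iff d(x,y)<q^{n-1}$. The two-sided induction anchored at a single level, keeping $0$ as the representative of its own cell throughout, is likewise how the paper handles the index set $\mathbb{Z}$.
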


\begin{proof}
    Fix $n\in \mathbb{Z}$. 
    First, we see that there exists a subset $C_n$ of $M$ 
    satisfying \ref{item:a}, \ref{item:c}, and \ref{item:d}. 
    Define
    \begin{align*}
    \mathcal{A} = \{&B\subset M\colon \text{$0\in B$; for all $x,y\in B$ with $x\neq y$, $d(x,y)\geq q^n$}\}.
    \end{align*}
    A standard application of Zorn's lemma gives us
    a maximal (by inclusion) set $C_n$ in $\mathcal{A}$. 
    Assume for contradiction that $M \neq \bigcup_{x\in C_n}B(x,q^n)$
    and let $z\in M\setminus \bigcup_{x\in C_n} B(x,q^n)$. 
    For all $x\in C_n$, we have $d(x,z)\geq q^n$, and therefore, $C_n\cup\{z\}$ is in $\mathcal{A}$, 
    which contradicts $C_n$ being maximal.
        
    Define $C_{n+1}$ by first taking for any $x\in C_n$ a maximal set $A_x$ from 
    \begin{align*}
    \mathcal{A}_{x} = \{&B\subset M\colon \text{$x\in B$ and} \\
    &\text{for all $y_1,y_2\in B$ with $y_1\neq y_2$, $q^{n+1}\leq d(y_1,y_2)< q^n$}\},
    \end{align*}
    and note that $\bigcup_{y\in A_x} B(y,q^{n+1}) = B(x,q^n)$. 
    Indeed, if there existed $z\in B(x,q^n)\setminus \bigcup_{y\in A_x} B(y,q^{n+1})$, 
    then for all $y\in A_x$,
    \[
    q^{n+1}\leq d(y,z)\leq \max\{d(y,x),d(x,z)\}< \max\{q^n, q^n\} = q^n, 
    \]
    which would contradict $A_x$ being maximal because $A_x\cup \{z\}\in \mathcal{A}_x$. Let $C_{n+1} = \bigcup_{x\in C_n} A_{x}$ and note that $C_{n+1}$ satisfies \ref{item:a}, \ref{item:c}, and \ref{item:d} when replacing $n$ with $n+1$, and that $C_n\subset C_{n+1}$. Continue this process iteratively to define $C_{n+k}$ for all $k\in \mathbb{N}$.
    
    We now define $C_{n-1}$. First, define an equivalence relation on $C_n$ with $x\equiv y$ if and only if $d(x,y)< q^{n-1}$. It is clear that $\equiv$ is reflexive and symmetric; for transitivity, we note that if $d(x,y)<q^{n-1}$ and $d(y,z)<q^{n-1}$, then $d(x,z)<q^{n-1}$. Pick an element from each equivalence class and let $A$ be the collection of such elements. To make sure that $0\in C_{n-1}$, take $x\in A$ with $x\equiv 0$, let $C_{n-1}=(A\setminus \{x\})\cup\{0\}$, and note that $C_{n-1}$ satisfies \ref{item:a}, \ref{item:c}, and \ref{item:d} when replacing $n$ with $n-1$, and that $C_{n-1}\subset C_{n}$. Continue this process iteratively to define $C_{n-k}$ for all $k\in \mathbb{N}$.
    \end{proof}

\subsection{Preliminaries on spherical completeness.}
    The notion of spherical completeness is maybe not well-known,
    so we include a few results and definitions that we will need.
    Proofs we have left out can be found in, e.g. \cite{MR361697,MR2810332}.
    \begin{defn}
        A sequence of sets $(A_n)$ is said to be
          \emph{nested} if $A_{n+1} \subset A_n$ for all $n\in \mathbb{N}$.
    \end{defn}

    \begin{defn}
        An ultrametric space $M$ is said to be \emph{spherically complete} if every nested sequence of balls in $M$ has nonempty intersection.
    \end{defn}

    \begin{prop}
        A spherically complete ultrametric space is complete.
    \end{prop}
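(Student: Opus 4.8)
The plan is to prove completeness directly from the definition: take an arbitrary Cauchy sequence $(x_n)$ in $M$ and manufacture from it a nested sequence of balls, so that spherical completeness hands me a point in the intersection which I then identify as the limit.

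First I would use the Cauchy condition to select the data of the balls. Fix a strictly decreasing sequence of radii $r_k \to 0$ (say $r_k = 1/k$). For each $k$, the Cauchy property yields an index $N_k$ with $d(x_m, x_n) < r_k$ whenever $m, n \ge N_k$, and I may arrange $N_1 \le N_2 \le \cdots$. Set $B_k = B[x_{N_k}, r_k]$. To see that $(B_k)$ is nested, suppose $y \in B_{k+1}$, so that $d(y, x_{N_{k+1}}) \le r_{k+1} \le r_k$; since $N_k, N_{k+1} \ge N_k$ we also have $d(x_{N_{k+1}}, x_{N_k}) < r_k$, and the strong triangle inequality gives
\[
  d(y, x_{N_k}) \le \max\{d(y, x_{N_{k+1}}),\, d(x_{N_{k+1}}, x_{N_k})\} \le r_k,
\]
so $y \in B_k$. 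Thus $B_1 \supseteq B_2 \supseteq \cdots$. Now by spherical completeness $\bigcap_k B_k \neq \emptyset$; pick $x$ in this intersection. For each $k$ we then have $d(x, x_{N_k}) \le r_k$, so for every $n \ge N_k$ the strong triangle inequality gives $d(x, x_n) \le \max\{d(x, x_{N_k}),\, d(x_{N_k}, x_n)\} \le r_k$. As $r_k \to 0$ this shows $x_n \to x$, and hence $M$ is complete.

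The only genuine subtlety, and the step I would be most careful about, is the interplay between the notion of ``ball'' appearing in the definition of spherical completeness and the balls I construct. I have used the closed balls $B[x_{N_k}, r_k]$; if one insists on open balls in the definition, the same computation goes through verbatim with $B(x_{N_k}, r_k)$ and strict inequalities, using that the maximum of two quantities each strictly below $r_k$ is again strictly below $r_k$. Everything else is a routine application of the strong triangle inequality (cf.\ Lemma~\ref{lem: prop_ultram}).
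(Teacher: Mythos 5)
Your proof is correct. The paper does not actually prove this proposition --- it is stated without proof, with the reader referred to the cited references on ultrametric spaces --- so there is no argument to compare against; what you give is the standard derivation (Cauchy sequence $\Rightarrow$ nested balls of shrinking radii via the strong triangle inequality $\Rightarrow$ point in the intersection is the limit), and it is complete. You were right to flag the open-versus-closed-ball issue as the only point needing care, and your remark that the computation survives verbatim with strict inequalities settles it.
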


    \begin{defn}
        A sequence $(x_n)$ in $M$ is said to be \emph{pseudo-Cauchy} if, given $n_1,n_2,n_3\in \mathbb{N}$, $n_1<n_2<n_3$, we have $d(x_{n_3},x_{n_2})<d(x_{n_2},x_{n_1})$.
    \end{defn}

We note that a sequence $(x_n)$ in an ultrametric space
is Cauchy if and only if $\lim_n d(x_n,x_{n+1}) = 0$
(see e.g. \cite[Lemma~1.5]{MR2810332}).

    \begin{defn}
        An element $x\in M$ is said to be a \emph{pseudo-limit} of a pseudo-Cauchy sequence $(x_n)$ if, given $n_1,n_2\in \mathbb{N}$, $n_1<n_2$, we have $d(x_{n_2},x)<d(x_{n_1},x)$.
    \end{defn}

    \begin{prop}
      \label{prop:SpComplete=PseudoCauchyLimit}
        An ultrametric space is spherically complete if and only if every pseudo-Cauchy sequence in $M$ has a pseudo-limit.
    \end{prop}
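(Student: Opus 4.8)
The plan is to prove the two implications separately, in both cases exploiting the ultrametric trichotomy of Lemma~\ref{lem: prop_ultram}\ref{item:1} to upgrade inequalities to exact values of the metric.

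First I would assume $M$ is spherically complete and take a pseudo-Cauchy sequence $(x_n)$. Put $r_n = d(x_{n+1},x_n)$. Applying the definition with $n_1=n,\ n_2=n+1,\ n_3=n+2$ shows that $(r_n)$ is strictly decreasing, so in particular $r_n>0$ and the $x_n$ are distinct. Using the pseudo-Cauchy condition with indices $n,\ n+1,\ m$ gives $d(x_m,x_{n+1})<r_n=d(x_n,x_{n+1})$ for $m>n+1$, whence Lemma~\ref{lem: prop_ultram}\ref{item:1} forces $d(x_m,x_n)=r_n$ for all $m>n$. Consequently the closed balls $B[x_n,r_n]$ are nested, since $d(y,x_{n+1})\le r_{n+1}$ implies $d(y,x_n)\le\max\{r_{n+1},r_n\}=r_n$. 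Spherical completeness then yields a point $x\in\bigcap_n B[x_n,r_n]$, and because $d(x,x_{n+1})\le r_{n+1}<r_n=d(x_{n+1},x_n)$, another use of Lemma~\ref{lem: prop_ultram}\ref{item:1} gives $d(x,x_n)=r_n$ for every $n$; as $(r_n)$ is strictly decreasing, $x$ is a pseudo-limit.

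For the converse I would assume every pseudo-Cauchy sequence has a pseudo-limit and take nested balls $B_1\supseteq B_2\supseteq\dotsb$. If the inclusions are eventually equalities, say $B_n=B_N$ for $n\ge N$, then $\bigcap_n B_n=B_N\neq\emptyset$. Otherwise, passing to a strictly decreasing subsequence, I may assume $B_1\supsetneq B_2\supsetneq\dotsb$ and choose $x_k\in B_k\setminus B_{k+1}$. For $k_1<k_2<k_3$ the points $x_{k_2},x_{k_3}$ lie in $B_{k_2}$ while $x_{k_1}\notin B_{k_2}$; using that every point of a ball is a centre (Lemma~\ref{lem: prop_ultram}\ref{item:3}), in the open-ball case $d(x_{k_3},x_{k_2})$ is strictly less than the radius of $B_{k_2}$ and $d(x_{k_2},x_{k_1})$ is at least that radius (Lemma~\ref{lem: prop_ultram}\ref{item:2}), whereas in the closed-ball case $d(x_{k_3},x_{k_2})$ is at most the radius and $d(x_{k_2},x_{k_1})$ strictly exceeds it (the closed analogue of Lemma~\ref{lem: prop_ultram}\ref{item:2}, which follows from Lemma~\ref{lem: prop_ultram}\ref{item:1}). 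Either way $d(x_{k_3},x_{k_2})<d(x_{k_2},x_{k_1})$, so $(x_k)$ is pseudo-Cauchy. Let $x$ be a pseudo-limit. From $d(x,x_{k+1})<d(x,x_k)$ and Lemma~\ref{lem: prop_ultram}\ref{item:1} we get $d(x,x_k)=d(x_{k+1},x_k)$, which is bounded by the radius of $B_k$ since $x_k,x_{k+1}\in B_k$; combining this with $d(x_k,\text{centre of }B_k)\le\text{radius}$ via the strong triangle inequality gives $x\in B_k$ for every $k$, hence $x\in\bigcap_k B_k$.

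The first implication is routine once one sees that the tail distances $d(x_m,x_n)$ collapse to the single value $r_n$; the substance of the argument is the converse, where a pseudo-Cauchy sequence must be manufactured from an abstract nested family of balls. The two points I expect to require the most care are the case distinction isolating a stabilising family (where the set-differences $B_k\setminus B_{k+1}$ are empty and no points can be chosen), and the uniform treatment of open versus closed balls: the "inside" bound and the "outside" bound must each be matched to the correct strict or non-strict inequality so that the one strict inequality needed for the pseudo-Cauchy and pseudo-limit conditions always survives. All of this reduces to careful bookkeeping with Lemma~\ref{lem: prop_ultram}.
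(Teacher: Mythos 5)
Your proof is correct. Note that the paper itself does not prove this proposition --- it defers to the cited references (Narici--Beckenstein--Bachman and Schneider) --- and your argument is essentially the classical one found there: in the forward direction the pseudo-Cauchy condition plus the ultrametric trichotomy collapses all tail distances to $r_n=d(x_n,x_{n+1})$, producing nested balls whose common point is a pseudo-limit; in the converse a strictly decreasing subsequence of balls yields a pseudo-Cauchy sequence of points $x_k\in B_k\setminus B_{k+1}$ whose pseudo-limit lies in every $B_k$. Both directions check out, including the open/closed-ball bookkeeping and the stabilising case; the only cosmetic remark is that if one reads the paper's ``nested sequence of balls'' as open balls only, your closed balls $B[x_n,r_n]$ in the forward direction should be replaced by (or sandwiched between) the open balls $B(x_n,r_{n-1})$, which changes nothing.
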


    \begin{lem}\label{lem: not spherically complete}
    Let $M$ be a complete ultrametric space that is not spherically
    complete. Then there exists a nested sequence of balls
    \begin{equation*}
      B(x_1,r_1)\supsetneq B(x_2,r_2)\supsetneq \dotsb
    \end{equation*}
    in $M$ with empty intersection,
    where $(r_n)$ is a strictly decreasing sequence satisfying $\lim_n r_n>0$.
    \end{lem}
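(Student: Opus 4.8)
The plan is to begin from the definition: since $M$ is not spherically complete, there is \emph{some} nested sequence of balls $B_1\supseteq B_2\supseteq\cdots$ with $\bigcap_n B_n=\emptyset$. Two cheap reductions come first. If the sequence were eventually constant its intersection would be a (nonempty) ball, so it is not; hence for every $N$ there is $n>N$ with $B_n\subsetneq B_N$, and a greedy choice extracts a strictly decreasing subsequence, which still has empty intersection (for a nested family every subsequence has the same intersection). Relabelling, I may assume $B_1\supsetneq B_2\supsetneq\cdots$. Moreover, as $M\neq\emptyset$ and the intersection is empty, not every $B_n$ equals $M$; since the balls equal to $M$ (if any) form an initial segment, I may also assume $B_n\neq M$, i.e. $M\setminus B_n\neq\emptyset$, for every $n$.

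The key idea is to discard the given radii and attach to each set $B_n$ the intrinsic radius
\[
  \tau_n := \operatorname{dist}(x_n, M\setminus B_n) = \inf\{\, d(x_n,v) : v\notin B_n \,\},
\]
where $x_n$ is any point of $B_n$. Using Lemma~\ref{lem: prop_ultram}\ref{item:2} I would first check that $\tau_n$ does not depend on the chosen centre $x_n$ (if $x_n,x_n'\in B_n$ and $v\notin B_n$, then $d(x_n,x_n')<d(x_n,v)$ forces $d(x_n',v)=d(x_n,v)$) and that $0<\tau_n<\infty$. Then, combining Lemma~\ref{lem: prop_ultram}\ref{item:2} and \ref{item:3}, I would show $B_n=B(x_n,\tau_n)$: every $y\in B_n$ satisfies $d(x_n,y)<\rho_n\le\tau_n$ for the original radius $\rho_n$, while any $y$ with $d(x_n,y)<\tau_n$ cannot lie in $M\setminus B_n$ by the definition of $\tau_n$. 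This normalisation is what lets me control the radii, and I regard it as the heart of the argument.

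With this representation in hand, strict monotonicity of $(\tau_n)$ is easy. Given $n$, pick $w\in B_n\setminus B_{n+1}$, which exists by strictness, and let $x_{n+1}\in B_{n+1}\subseteq B_n$ be the centre used for $\tau_{n+1}$. Since $w\notin B_{n+1}$ we have $\tau_{n+1}\le d(x_{n+1},w)$, while $x_{n+1},w\in B_n=B(x_n,\tau_n)$ give $d(x_n,x_{n+1})<\tau_n$ and $d(x_n,w)<\tau_n$, so the strong triangle inequality yields $d(x_{n+1},w)<\tau_n$. Hence $\tau_{n+1}<\tau_n$.

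Finally I would prove $\lim_n\tau_n>0$, and this is where completeness enters and is the only place one must genuinely work. As $(\tau_n)$ is decreasing it has a limit $L\ge0$; suppose $L=0$. Choosing centres $x_n\in B_n$, the inclusion $x_{n+1}\in B_{n+1}\subseteq B_n=B(x_n,\tau_n)$ gives $d(x_n,x_{n+1})<\tau_n\to0$, so $(x_n)$ is Cauchy by the characterisation of Cauchy sequences in ultrametric spaces recalled above, and converges to some $x\in M$ by completeness. For each fixed $n$ we have $x_m\in B_m\subseteq B_n$ for all $m\ge n$, and $B_n$ is closed by Lemma~\ref{lem: prop_ultram}\ref{item:4}, so $x\in B_n$; thus $x\in\bigcap_n B_n=\emptyset$, a contradiction. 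Therefore $L>0$, and setting $r_n=\tau_n$ produces the required strictly decreasing radii with $\lim_n r_n=L>0$ representing the strictly nested balls $B(x_n,r_n)=B_n$ with empty intersection. The main obstacle throughout is the non-uniqueness of the radius of an ultrametric ball; replacing it by the canonical $\tau_n$ is the device that simultaneously yields strict monotonicity and, through completeness, a strictly positive limit.
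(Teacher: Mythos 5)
Your proof is correct, and its skeleton matches the paper's: extract a strictly nested subsequence (still with empty intersection), show the radii strictly decrease, and use completeness of $M$ plus closedness of balls to rule out $\lim_n r_n = 0$. The one genuine difference is your normalisation device: you discard the given radii and re-represent each ball $B_n$ as $B(x_n,\tau_n)$ with the intrinsic radius $\tau_n = \operatorname{dist}(x_n, M\setminus B_n)$, which requires the extra reduction $B_n \neq M$ and the verification that $\tau_n$ is well defined and recovers $B_n$. The paper avoids this entirely by observing that the \emph{original} radii already work: once consecutive balls are strictly nested, picking any $x \in B(x_k,r_k)\setminus B(x_{k+1},r_{k+1})$ gives
\begin{equation*}
  r_{k+1} \le d(x,x_{k+1}) \le \max\{d(x,x_k),\, d(x_k,x_{k+1})\} < r_k,
\end{equation*}
so $(r_k)$ is strictly decreasing with no renormalisation needed. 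Your route costs a little more bookkeeping (non-uniqueness of ultrametric radii is real, but it turns out not to be an obstacle here); what it buys is a canonical choice of radius, which is a reasonable instinct but not necessary for the statement. The completeness step at the end is identical in both arguments.
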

    
      \begin{proof}
        Let $(B(y_n, s_n))_{n\in \mathbb{N}}$ be a nested sequence of balls
        in $M$ with empty intersection. Since the intersection is empty
        there exists a subsequence $(n_k)$ of $\mathbb N$ such that for all
        $k\in \mathbb{N}$, 
        \[
          B(y_{n_k},s_{n_k})\neq B(y_{n_{k+1}},s_{n_{k+1}}).
        \]
        Clearly $\bigcap_{k} B(y_{n_k},s_{n_k}) \subset
        \bigcap_{n} B(y_{n},s_{n})=\emptyset.$

        For $k \in \mathbb N$, let $x_k = y_{n_k}$ and $r_k = s_{n_k}$. Then
        \[
          B(x_1,r_1)
          \supsetneq B(x_2,r_2)
          \supsetneq
          \dotsb
          \supsetneq B(x_k,r_k)
          \supsetneq\dotsb
        \]
        Observe that if
        $x \in B(x_{k}, r_{k}) \setminus B(x_{k+1}, r_{k+1}),$ then
        \[
        r_{k+1} \le d(x, x_{k+1}) \le \max\{d(x, x_k), d(x_k, x_{k+1})\}
        < r_k,
        \]
        so $(r_k)$ is strictly decreasing. Finally, if $\lim_k r_k=0$,
        then $(x_k)$ is a Cauchy sequence and 
        $\lim_k x_k\in \bigcap_k B(x_k,r_k)$. Therefore, $\lim_k r_k>0$.
      \end{proof}
      
\section{Non-dual Lipschitz-free spaces over ultrametric spaces}
\label{sec:non-dual-lipschitz}
In this section, we prove that the implication \ref{item:ii}$\Rightarrow$\ref{item:iii} of the Main theorem holds. We get an even stronger result, as we do not assume that $M$ is separable.

Let $\mathcal{U}$ be a free ultrafilter on $\mathbb{N}$.
Given a Banach space $X$, $r > 0$, and a sequence
$(x_n)$ in $r B_X$,
we have $w^*\textrm{-}\lim_{\mathcal{U}} x_n \in r B_{X^{**}}$, where
\[
w^*\textrm{-}\lim_{\mathcal{U}} x_n (f) = \lim_{\mathcal{U}} f(x_n)\qquad \text{for all $f\in X^*$.}
\]

\begin{prop}\label{prop: ultrametric not dual}
    Let $M$ be a complete ultrametric space. If $\mathcal{F}(M)$ is $1$-complemented in its bidual, then $M$ is spherically complete.
\end{prop}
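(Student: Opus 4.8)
The plan is to prove the contrapositive: assuming $M$ is complete but not spherically complete, I will show that $\mathcal{F}(M)$ is not $1$-complemented in its bidual. By Lemma~\ref{lem: not spherically complete}, failure of spherical completeness gives a nested sequence of balls $B(x_1,r_1) \supsetneq B(x_2,r_2) \supsetneq \dotsb$ with empty intersection, where $(r_n)$ is strictly decreasing with $\lim_n r_n = r_\infty > 0$. The empty intersection together with $r_\infty > 0$ is the crucial tension: the balls shrink but never collapse to a point, and completeness forbids a limit point, so the sequence $(\delta_{x_n})$ has no convergent subsequence in $\mathcal{F}(M)$ yet the points stay a bounded distance apart.

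\medskip
\noindent
First I would build a candidate functional to detect the defect. Consider the molecules $m_{x_n, x_{n+1}} = \frac{\delta_{x_n} - \delta_{x_{n+1}}}{d(x_n, x_{n+1})}$. Since $x_{n+1} \in B(x_n, r_n)$ while $x_n \notin B(x_{n+1}, r_{n+1})$ (the balls are strictly nested), Lemma~\ref{lem: prop_ultram} forces $d(x_n, x_{n+1})$ to lie in the range $[r_{n+1}, r_n)$, so these distances are comparable to $r_\infty$ from below and hence bounded away from $0$. The idea is to take the weak$^*$-limit along the free ultrafilter $\mathcal{U}$ of the sequence $(\delta_{x_n})$ (suitably recentred so it lands in $\mathcal{F}(M)^{**}$), producing an element $\xi = w^*\textrm{-}\lim_{\mathcal{U}} \delta_{x_n} \in \mathcal{F}(M)^{**}$ that does \emph{not} lie in $\mathcal{F}(M)$, precisely because no genuine limit point exists.

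\medskip
\noindent
The heart of the argument is a norm computation showing that any norm-one projection $P$ from $\mathcal{F}(M)^{**}$ onto $\mathcal{F}(M)$ would have to move $\xi$, yet the geometry of the molecules forces a contradiction with $\|P\| = 1$. Concretely, I would exhibit a Lipschitz function $f \in \Lip_0(M) = \mathcal{F}(M)^*$ (for instance something like $f(x) = \max\{0, r_\infty - \mathrm{dist}(x, \bigcap_k B(x_k,r_k))\}$ adapted to the ultrametric so it separates the tail of $(x_n)$ from the rest while remaining norm one) and use its iterated bidual action, exploiting that $f$ evaluates consistently on $\xi$ but that $P\xi \in \mathcal{F}(M)$ must be approximated by finite combinations of Dirac masses from a single ball, forcing a strictly larger norm when reconstructing $\xi$. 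The $L$-decomposition $\|x^{**}\| = \|Px^{**}\| + \|x^{**} - Px^{**}\|$ that a norming $1$-complementation would entail is what I expect to break.

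\medskip
\noindent
The main obstacle is the last step: turning the qualitative "$\xi \notin \mathcal{F}(M)$" into a quantitative violation of the isometric splitting. The subtlety is that $1$-complementation is far weaker than $M$-embeddedness, so I cannot simply invoke a known characterisation; instead I must produce explicit test functionals witnessing that the projection cannot preserve norms. The strict decrease of $(r_n)$ with positive limit is what I expect to supply the needed quantitative gap, since it guarantees the molecules $m_{x_n,x_{n+1}}$ form a sequence whose weak$^*$-cluster behaviour is incompatible with being captured inside $\mathcal{F}(M)$ isometrically; carefully choosing the separating Lipschitz functions to realise this gap will be the technical crux.
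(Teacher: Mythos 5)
Your setup matches the paper's: argue by contradiction, invoke Lemma~\ref{lem: not spherically complete} to get strictly nested balls $B(x_n,r_n)$ with empty intersection and $r_n\downarrow\alpha>0$, form $F=w^*\text{-}\lim_{\mathcal{U}}\delta_{x_n}\in\mathcal{F}(M)^{**}$, and apply the norm-one projection. But the ``heart of the argument'' is exactly where your proposal stops being a proof, and both concrete devices you offer for it fail. First, the candidate test function $f(x)=\max\{0,\,r_\infty-\mathrm{dist}(x,\bigcap_k B(x_k,r_k))\}$ is vacuous: the intersection is \emph{empty} by hypothesis, so the distance to it is undefined (or $+\infty$) and no single bump ``centred on the intersection'' can exist --- this emptiness is the whole difficulty, not a detail to adapt away. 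Second, you invoke the $L$-decomposition $\|x^{**}\|=\|Px^{**}\|+\|x^{**}-Px^{**}\|$, which does \emph{not} follow from $1$-complementation; a norm-one projection gives only $\|Px^{**}\|\le\|x^{**}\|$. You even note that $1$-complementation is far weaker than such structural hypotheses, yet the contradiction you ``expect to break'' is precisely that unavailable splitting.

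The missing quantitative engine in the paper is this. From $\|T\|\le 1$ one gets only the chain $\|\delta_{x_n}-T(F)\|\le\|\delta_{x_n}-F\|<r_n$ for every $n$, i.e.\ the single element $\phi=T(F)\in\mathcal{F}(M)$ is simultaneously within $r_n$ of $\delta_{x_n}$ for all $n$. To contradict this, the paper writes $\phi=\sum_i\lambda_i m_{p_i,q_i}$ as an absolutely convergent series of molecules, partitions the first ball into the annuli $A_n=B(x_n,r_n)\setminus B(x_{n+1},r_{n+1})$, and tracks the signed mass $\beta_n$ of $\phi$ sitting in each $A_n$. Two explicit families of $1$-Lipschitz test functions (constant on each annulus, with Lipschitz bounds coming from the fact that distinct annuli are at distance at least $r_{n+1}>\alpha$) then yield $\sum_{k\in K}\beta_k\ge 1$ on one hand and $\alpha\ge\sum_{k\in K}r_{k+1}\beta_k>\alpha\sum_{k\in K}\beta_k\ge\alpha$ on the other --- a contradiction. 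None of this bookkeeping (the molecular decomposition of $T(F)$, the annuli, the $\beta_n$, the two test families) is present or suggested in your proposal, so as written there is a genuine gap at the decisive step.
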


\begin{proof}    
    Let 
    $T\colon \mathcal{F}(M)^{**}\to \mathcal{F}(M)$ be a projection onto with $\|T\|\leq 1$. 
    Assume for contradiction that $M$ is not spherically complete. By Lemma~\ref{lem: not spherically complete}, 
    there exists a nested sequence of balls 
    $B(x_1,r_1)\supsetneq B(x_2,r_2)\supsetneq \dotsb$ in $M$ with empty intersection,
    where $(r_n)$ is a strictly decreasing sequence satisfying $\alpha := \lim_n r_n>0$.
    We may and do assume that $0\not\in B(x_1,r_1)$.

    Let $\mathcal{U}$ be a free ultrafilter on $\mathbb{N}$ and
    define 
    \[
        F=w^*\textrm{-}\lim_{\mathcal{U}}\delta_{x_n}\in \mathcal{F}(M)^{**}\quad \text{and} \quad \phi = T(F)\in \mathcal{F}(M).
    \]
    For every $n\in \mathbb{N}$, we have $\|\delta_{x_n}-\phi\|< r_n$. 
    Indeed, for all $m>n$, we have
    \begin{align*}
    \|\delta_{x_n}-\delta_{x_m}\|&=d(x_n,x_m) < r_{n},
    \end{align*}
    and therefore, 
    \[
    \|\delta_{x_n}-\phi\|\leq \|T\| \|\delta_{x_n}-F\|\leq \|\delta_{x_n}-F\|< r_n.
    \]
    Write $\phi = \sum_{i=1}^\infty \lambda_i m_{p_i,q_i}$, where
    $\lambda_i>0$, $p_i,q_i\in M$, $i\in \mathbb{N}$, and
    $\sum_i\lambda_i<\infty$. For $n\in \mathbb{N}$, let
    \begin{align*}
        A_n &= B(x_n,r_{n})\setminus B(x_{n+1},r_{n+1})\subset M,\\
        I_n &=\{i\in \mathbb{N}\colon p_i\in A_n\},\\
        J_n &= \{i\in \mathbb{N}\colon q_i\in A_n\},
    \end{align*}
    and let
    \begin{align*}
        \beta_n &=\sum_{i\in I_n} \frac{\lambda_i}{d(p_i,q_i)}-\sum_{i\in J_n} \frac{\lambda_i}{d(p_i,q_i)}=\sum_{i\in I_n\setminus J_n} \frac{\lambda_i}{d(p_i,q_i)}-\sum_{i\in J_n\setminus I_n} \frac{\lambda_i}{d(p_i,q_i)}.
    \end{align*}

  \begin{claim}{1}
  Let $n\in \mathbb{N}$. If $x\in A_n$ and $y\in M\setminus A_n$, then $d(x,y)\geq r_{n+1}$.
  \end{claim}
  \begin{claimproof}{}
    We use Lemma~\ref{lem: prop_ultram} %\ref{item:2} and 
    \ref{item:3}.    
    If $y\in B(x_{n+1},r_{n+1})$, then  
    $ B(x_{n+1},r_{n+1})=B(y,r_{n+1})$, so $x\not\in B(y,r_{n+1})$.
    If $y\not\in B(x_n,r_n)=B(x,r_n)$, then $d(x,y)\geq r_n>r_{n+1}$.
  \end{claimproof}
  \begin{claim}{2}
     The series $\sum_{n\in \mathbb{N}}\beta_n$ converges absolutely.
  \end{claim}
  \begin{claimproof}
    Let $n\in \mathbb{N}$.
    If $i\in I_n\setminus J_n$,
    then $p_i\in A_n$ and $q_i\in M\setminus A_n$,
    which implies $d(p_i,q_i)\geq r_{n+1}>\alpha$ by Claim~1.
    Analogously, for all $i\in J_n\setminus I_n$,
    we have $d(p_i,q_i)>\alpha$.
    Therefore,
    \begin{align*}
      |\beta_n|
      &\leq
      \sum_{i\in I_n\setminus J_n} \frac{\lambda_i}{d(p_i,q_i)}
      + \sum_{i\in J_n\setminus I_n} \frac{\lambda_i}{d(p_i,q_i)} \\
      &\leq
      \sum_{i\in I_n\setminus J_n}\frac{\lambda_i}{\alpha}
      +
      \sum_{i\in J_n\setminus I_n}\frac{\lambda_i}{\alpha}
      \leq
      \sum_{i \in I_n}\frac{\lambda_i}{\alpha}
      +
      \sum_{i\in J_n} \frac{\lambda_i}{\alpha}.
    \end{align*}
    Given $\varepsilon > 0$, there exists $N\in \mathbb{N}$ so
    that $\sum_{i > N} \lambda_i < \frac{\varepsilon \alpha}{2}$.
    Note that for any $k\in \mathbb{N}$,
    \begin{equation*}
      \bigcup_{n\geq k}A_n \subset
      \bigcup_{n\geq k} B(x_n,r_{n})
      = B(x_k,r_k).
    \end{equation*}
    Since $\bigcap_{k\in \mathbb{N}} B(x_k,r_{k})=\emptyset$,
    we have $\limsup_k A_k=\emptyset$.
    Therefore, there exists $k\in \mathbb{N}$
    such that, for $i = 1,\ldots, N$, 
    $p_i,q_i\not\in \bigcup_{n\geq k}A_n$,
    i.e. $i \not\in \bigcup_{n\geq k} I_n$
    and $i \notin  \bigcup_{n\geq k} J_n$.
    Since $I_n \cap I_m = \emptyset = J_n \cap J_m$
    for all $n \neq m$,
    we get
    \begin{equation*}
      \sum_{n\geq k} |\beta_n|
      \leq
      \sum_{n\geq k}
      \sum_{i \in I_n}\frac{\lambda_i}{\alpha}
      +
      \sum_{n\geq k}
      \sum_{i\in J_n} \frac{\lambda_i}{\alpha}
      \le
      \frac{2}{\alpha} \sum_{i > N} \lambda_i
      < \varepsilon,
    \end{equation*}
    which proves our claim.
  \end{claimproof}
  Let $K=\{k\in \mathbb{N}\colon \beta_k>0\}$ and for $n\in \mathbb{N}$, let $K_n=K\cap \{1,\dotsc,n-1\}$.
  We show that $\sum_{k\in K}\beta_k\geq 1$. Let $n\in \mathbb{N}$, define
    \[
    f_n(x)=\begin{cases}
        r_1-r_{k+1},  & \text{if $x\in A_k$, $k\in K_n$;}\\
        r_1, & \text{if $x\in B(x_1,r_1)\setminus \big(\bigcup_{k\in K_n}A_k\big) $;}\\
        0, & \text{otherwise,}
    \end{cases}
    \]
    and use Claim~1 to verify that $f_n\in B_{\Lip_0(M)}$.
    Note that
    \begin{align*}
        r_n&\geq \|\delta_{x_n}-\phi\|\geq f_n(\delta_{x_n}-\phi)\\
        &= r_1-\sum_{k\in K_n}\big(\sum_{i\in I_k} \lambda_i \frac{r_1-r_{k+1}}{d(p_i,q_i)}-\sum_{i\in J_k} \lambda_i \frac{r_1-r_{k+1}}{d(p_i,q_i)}\big)\\
        &\qquad-\sum_{k\in \mathbb{N}\setminus K_n}\big(\sum_{i\in I_k} \lambda_i \frac{r_1}{d(p_i,q_i)}-\sum_{i\in J_k} \lambda_i \frac{r_1}{d(p_i,q_i)}\big)\\
        &= r_1-\sum_{k\in K_n}(r_1-r_{k+1})\beta_k -\sum_{k \in \mathbb{N}\setminus K_n} r_1\beta_k\\
        &\geq r_1-\sum_{k\in K_n}(r_1-r_{k+1})\beta_k -\sum_{k \in K\setminus K_n} r_1\beta_k.
    \end{align*}
    Taking $n\to \infty$ and using Claim~2, we get
    \begin{align*}
        \alpha &\geq r_1-\sum_{k\in K}(r_1-r_{k+1})\beta_k,
    \end{align*}
    which gives us
    \begin{align*}
    \sum_{k\in K}(r_1-r_{k+1})\beta_k \geq r_1-\alpha.
    \end{align*}
    Since $r_{k+1}> \alpha$ and $\beta_k>0$ for all $k\in K$, we get
    \begin{align*}
    (r_1-\alpha)\sum_{k\in K}\beta_k \geq  \sum_{k\in K}(r_1-r_{k+1})\beta_k \geq  r_1-\alpha.
    \end{align*}
    Therefore, $\sum_{k\in K}\beta_k \geq 1$.

    Let $n\in \mathbb{N}$, define
    \[
    g_n(x)=\begin{cases}
        r_{k+1},  & \text{if $x\in A_k$, $k\in K_n$;}\\
        0, & \text{otherwise,}
    \end{cases}
    \]
    and use Claim~1 to verify that $g_n\in B_{\Lip_0(M)}$. Note that
    \begin{align*}
        r_n&\geq \|\phi-\delta_{x_n}\|\geq g_n(\phi-\delta_{x_n})= \sum_{k\in K_n}\big(\sum_{i\in I_k} \lambda_i \frac{r_{k+1}}{d(p_i,q_i)}-\sum_{i\in J_k} \lambda_i \frac{r_{k+1}}{d(p_i,q_i)}\big)\\
        &=\sum_{k\in K_n} r_{k+1}\beta_k.
    \end{align*}
    Taking $n\to \infty$ and using Claim~2, we get $\alpha \geq \sum_{k\in K}r_{k+1}\beta_k$. However, we have $\alpha<r_{k+1}$ for all $k\in \mathbb{N}$ and $\sum_{k\in K} \beta_k\geq 1$. This is a contradiction.
\end{proof}

We give an example of a separable ultrametric space $M$ for which $\mathcal{F}(M)$ is not a dual space. It is notable that the metric space $M$ is bounded and uniformly discrete because to our knowledge, only one example of such a metric space $M$, for which it is known that $\mathcal{F}(M)$ is not a dual, has previously appeared in literature~\cite[Example~5.8]{MR3778926}.
\begin{exmp}
    Let $M=(\mathbb{N}\cup\{0\},d)$ be an ultrametric space, where the distances between two elements are defined for $m,n\in \mathbb{N}\cup\{0\}$, $m\neq n$,
    \[
    d(m,n)= 1+\frac{1}{2^{\min\{m,n\}}}.
    \]
    Note that $M$ is complete and that any subsequence of $\mathbb{N}$
    is a pseudo-Cauchy sequence in this space that has no pseudo-limit. Therefore, $M$ is not spherically complete and $\mathcal{F}(M)$ is not a dual space by Proposition~\ref{prop: ultrametric not dual}.
\end{exmp} 

The metric space $M$ in the previous example can be made spherically complete with the addition of a single element.
\begin{exmp}
    Let $M=(\mathbb{N}\cup\{0\}\cup\{\omega\},d)$ be an ultrametric space, where the distances between two elements are defined for $m,n\in \mathbb{N}\cup\{0\}\cup\{\omega\}$, $m\neq n$,
    \[
    d(m,n)=\begin{cases}
        1+\frac{1}{2^{n}}, & \text{if $m=\omega$};\\
        1+\frac{1}{2^{m}}, & \text{if $n=\omega$};\\
        1+\frac{1}{2^{\min\{m,n\}}}, & \text{otherwise}.
    \end{cases}
    \]
    Then $M$ is spherically complete because every pseudo-Cauchy sequence in $M$ is a subsequence of $\mathbb{N}\cup\{0\}$ and $\omega$ is the pseudo-limit for all such sequences.
\end{exmp}

The next example shows that we cannot apply Proposition~\ref{prop: ultrametric not dual} unless $M$ is complete.

\begin{exmp}\label{exmp: F(M) with dual where d(x_n,x_n+1) goes to 0}
    Let $M=(\mathbb{N}\cup\{0\}\cup\{\omega\},d)$ be an ultrametric space, where the distances between two elements are defined for $m,n\in \mathbb{N}\cup\{0\}\cup\{\omega\}$, $m\neq n$,
    \[
    d(m,n)=\begin{cases}
        \frac{1}{2^{n}}, & \text{if $m=\omega$;}\\
        \frac{1}{2^{m}}, & \text{if $n=\omega$;}\\
        \frac{1}{2^{\min\{m,n\}}}, & \text{otherwise}.
    \end{cases}
    \]
    Then $\mathcal{F}(M)$ is a dual space because $M$ is a compact
    ultrametric space \cite[Theorem~3.7]{MR3376824}. Take
    $N=M\setminus\{\omega\}$ and note that
    $\mathcal{F}(M)=\mathcal{F}(N)$ because $M$ is the closure of $N$.
    However, $N$ is not complete, and therefore, it is not spherically complete.
\end{exmp}

\section{Dual Lipschitz-free spaces over separable ultrametric spaces}
\label{sec:dual-lipschitz-free}
We construct and study the predual of $\mathcal{F}(M)$ 
for a separable spherically 
complete ultrametric space $M$.
Recall the Main theorem
stated in the introduction:

\begin{thm}\label{thm: main, duality characterisation}
  Let $M$ be a complete separable ultrametric space.
  The following are equivalent:
  \begin{enumerate}
  \item\label{item:i}
    $\mathcal{F}(M)$ is a dual Banach space;
  \item\label{item:ii}
    $\mathcal{F}(M)$ is $1$-complemented in its bidual
    $\mathcal{F}(M)^{**}$;
  \item\label{item:iii}
    $M$ is spherically complete.
  \end{enumerate}
\end{thm}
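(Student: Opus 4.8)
The plan is to establish the cycle \ref{item:i} $\Rightarrow$ \ref{item:ii} $\Rightarrow$ \ref{item:iii} $\Rightarrow$ \ref{item:i}. The first implication is the standard duality fact: if $\mathcal{F}(M) = Y^*$ isometrically for some Banach space $Y$, then the canonical embedding $Y \hookrightarrow Y^{**}$ has as adjoint a norm-one projection of $\mathcal{F}(M)^{**} = Y^{***}$ onto (the canonical image of) $Y^* = \mathcal{F}(M)$, giving \ref{item:ii}. The implication \ref{item:ii} $\Rightarrow$ \ref{item:iii} is precisely Proposition~\ref{prop: ultrametric not dual}, which moreover does not require separability. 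Hence the entire remaining content is \ref{item:iii} $\Rightarrow$ \ref{item:i}: for a separable, spherically complete (hence complete) ultrametric space $M$, the space $\mathcal{F}(M)$ is isometrically a dual.

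For this last implication I would build the predual from the basis machinery of \cite{MR3530906}. Fix $q \in (0,1)$, take the nested partitions $(C_n)_{n\in\mathbb{Z}}$ of Lemma~\ref{lem: partitioning}, and organise the balls $B(x,q^n)$, $x\in C_n$, into the natural refining tree. Along this tree one obtains a \emph{monotone} Schauder basis $(e_k)$ of $\mathcal{F}(M)$ whose members are molecules $m_{x,y}$ joining a freshly appearing representative $x\in C_{n+1}$ to its parent $y\in C_n$ with $x\in B(y,q^n)$. Let $(e_k^*)\subset\Lip_0(M)$ be the biorthogonal coordinate functionals---concretely, suitably normalised differences of indicator functions of balls, which lie in $\Lip_0(M)$ because balls are clopen by Lemma~\ref{lem: prop_ultram}\ref{item:4}---and put $Y = \overline{\operatorname{span}}\{e_k^* : k\}$. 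Monotonicity of the basis makes $Y$ a $1$-norming subspace, so the canonical map $\mathcal{F}(M)\to Y^*$ is an isometric embedding; by the classical theory of boundedly complete bases this embedding is surjective---so that $\mathcal{F}(M)=Y^*$ isometrically---exactly when the basis $(e_k)$ is boundedly complete.

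Everything therefore reduces to the claim that $(e_k)$ is boundedly complete when $M$ is spherically complete, and this is where the hypothesis does its work. Suppose the partial sums $S_N=\sum_{k\le N}a_k e_k$ are norm-bounded but $(S_N)$ fails to converge. Tracking the coefficients through the tree, I would argue that non-convergence forces a fixed positive amount of mass to persist arbitrarily deep along a single branch, and extract from it---much as in Lemma~\ref{lem: not spherically complete} and in the proof of Proposition~\ref{prop: ultrametric not dual}---a genuinely nested sequence of balls $B(z_1,q^{n_1})\supsetneq B(z_2,q^{n_2})\supsetneq\cdots$ carrying this escaping mass. Spherical completeness now supplies a point $w\in\bigcap_j B(z_j,q^{n_j})$ (equivalently a pseudo-limit, via Proposition~\ref{prop:SpComplete=PseudoCauchyLimit}); feeding $w$ back in shows that the escaping mass is in fact absorbed by $\delta_w$, so $(S_N)$ must converge after all, a contradiction. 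Thus $(e_k)$ is boundedly complete and $\mathcal{F}(M)=Y^*$.

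I expect the bounded-completeness step to be the main obstacle. The soft assertion ``bounded but non-convergent partial sums'' must be converted into a concrete nested family of balls along which a definite proportion of mass escapes, and one must then verify that the common point granted by spherical completeness genuinely forces convergence rather than merely displacing the difficulty one branch further. This computation runs parallel to---but in the opposite direction from---Proposition~\ref{prop: ultrametric not dual}, where it was exactly the absence of such a common point (with radii bounded below by $\alpha>0$) that destroyed the norm-one complementation any isometric predual would have to provide.
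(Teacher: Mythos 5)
Your handling of \ref{item:i}$\Rightarrow$\ref{item:ii} and \ref{item:ii}$\Rightarrow$\ref{item:iii} matches the paper exactly (the latter is Proposition~\ref{prop: ultrametric not dual}). For \ref{item:iii}$\Rightarrow$\ref{item:i} your framing is genuinely different from the paper's: you reduce the problem to showing that the monotone Schauder basis of \cite{MR3530906} is boundedly complete, whence $\mathcal{F}(M)$ is isometrically the dual of the closed span of the coordinate functionals; the paper instead defines a concrete function space $Y$ (Definition~\ref{defn: predual Y of F(M)}) and applies the Petun{\=\i}n--Pl{\=\i}\v{c}ko criterion \cite{MR336299}, so that the work goes into proving that $Y$ is closed, norming, and that every $f\in Y$ attains its norm (Propositions~\ref{prop:Y-closed_sep_subspace_of_lip0u} and~\ref{prop: norm-attainment}). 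Your reduction is legitimate: for a monotone basis the canonical map into $[e_k^*]^*$ is an isometric embedding, and it is onto precisely when the basis is boundedly complete. But note this cannot be softened: $\mathcal{F}(M)$ is always isomorphic to $\ell_1$, yet the basis fails to be boundedly complete exactly in the non-spherically-complete case, so the entire content of the theorem is concentrated in the step you leave as a sketch.

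That step contains a genuine gap, and the mechanism you propose for closing it is wrong. You say that spherical completeness supplies a common point $w$ of the nested balls $B(z_j,q^{n_j})$ carrying the escaping mass, and that ``the escaping mass is in fact absorbed by $\delta_w$.'' By Lemma~\ref{lem: not spherically complete} the radii of a genuinely nested non-convergent chain are bounded below by some $\alpha>0$, so the centres $z_j$ stay at distance at least $\alpha$ from $w$ and $\delta_{z_j}\not\to\delta_w$ in $\mathcal{F}(M)$; the second example of Section~\ref{sec:non-dual-lipschitz} ($\mathbb{N}\cup\{0\}\cup\{\omega\}$ with all distances at least $1$) shows this concretely: the space is spherically complete, yet the sequence $\delta_n$ does not converge to $\delta_\omega$ or to anything else, so no ``absorption'' occurs. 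The way spherical completeness actually enters is subtler: in the paper it is Lemma~\ref{lem: s_n distances decreasing}, which shows that a pseudo-Cauchy subsequence of the dense set $S$ that is compatible with the retractions ($r_{n_j}(s_{n_{j+1}})=s_{n_j}$) must be genuinely Cauchy, because the pseudo-limit can be approximated by some $s_N\in S$ and the retraction property then forces the successive distances below $\varepsilon$. In other words, for points coming from the basis construction the problematic chains with radii bounded below simply cannot arise, rather than being resolved by a limit point. Without an argument of this kind your bounded-completeness claim is unsupported, and as written the proposal does not prove \ref{item:iii}$\Rightarrow$\ref{item:i}.
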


\begin{proof}
Any dual Banach space is $1$-complemented in its bidual,
so \ref{item:i}$\Rightarrow$\ref{item:ii}
holds even if we replace the Lipschitz-free space
$\mathcal{F}(M)$ with any Banach space. The implication
\ref{item:ii}$\Rightarrow$\ref{item:iii} holds by
Proposition~\ref{prop: ultrametric not dual}.

It remains to prove \ref{item:iii} $\Rightarrow$ \ref{item:i}.
We will show that if $M$ is spherically complete,
then the subset $Y$ of $\Lip_0(M)$,
given in Definition~\ref{defn: predual Y of F(M)} below,
satisfies  $Y^*=\mathcal{F}(M)$.
By \cite[Theorem~4]{MR336299}
(see also \cite[Theorem~IV.2]{MR886417}), 
it is enough to show that $Y$ is a closed linear subspace of
$\Lip_0(M)$ that is norming and that all $f\in Y$ attain their norm.
This will be done in Propositions~\ref{prop:Y-closed_sep_subspace_of_lip0u}~\ref{item:Y-props1} and~\ref{prop: norm-attainment}.
\end{proof}

Before proceeding, let us point out that Dalet's \cite{MR3376824}
result that for proper ultrametric spaces $M$, the space
$\mathcal{F}(M)$ is dual space, is immediate from the
above theorem.

\begin{cor}\label{cor:proper_is_dual}
  If $M$ is a proper ultrametric space,
  then $\mathcal{F}(M)$ is a dual Banach space.
\end{cor}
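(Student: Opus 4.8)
The plan is to deduce this directly from Theorem~\ref{thm: main, duality characterisation} by verifying that a proper ultrametric space $M$ meets all of its hypotheses and satisfies condition~\ref{item:iii}. The theorem requires $M$ to be complete and separable and then asserts that sphericial completeness is equivalent to $\mathcal{F}(M)$ being dual; so it suffices to check that ``proper'' supplies completeness, separability, and spherical completeness, after which the implication \ref{item:iii}$\Rightarrow$\ref{item:i} finishes the argument.

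First I would record the two soft topological facts. Properness means that every closed bounded set is compact, so in particular each closed ball $B[0,n]$ is compact. Writing $M = \bigcup_{n} B[0,n]$ exhibits $M$ as $\sigma$-compact; since each $B[0,n]$ is separable and a countable union of separable sets is separable, $M$ is separable. For completeness, any Cauchy sequence is bounded, hence contained in some compact $B[0,R]$, where it has a convergent subsequence; being Cauchy, it then converges. Thus $M$ is complete and separable, and the hypotheses of the Main theorem are in force.

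The crux is verifying spherical completeness, and here the ultrametric structure does the work. By Lemma~\ref{lem: prop_ultram}\ref{item:4}, every open ball $B(x,r)$ is also closed; being bounded and closed in a proper space, it is therefore compact, and the same holds trivially for closed balls $B[x,r]$. Consequently, given any nested sequence of balls in $M$, we have a nested sequence of nonempty compact sets, and Cantor's intersection theorem (finite intersection property) guarantees that their intersection is nonempty. Hence $M$ is spherically complete, which is condition~\ref{item:iii}.

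Having established that a proper ultrametric space is complete, separable, and spherically complete, I would invoke the implication \ref{item:iii}$\Rightarrow$\ref{item:i} of Theorem~\ref{thm: main, duality characterisation} to conclude that $\mathcal{F}(M)$ is a dual Banach space. I do not anticipate a genuine obstacle in this argument; the only subtlety worth flagging explicitly is the observation that in the ultrametric setting open balls coincide with closed balls, so that properness forces \emph{every} ball---regardless of whether spherical completeness is phrased via open or closed balls---to be compact, rendering the nested-intersection property automatic.
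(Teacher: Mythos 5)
Your proposal is correct and follows essentially the same route as the paper: open balls in an ultrametric space are also closed, hence compact when $M$ is proper, so a nested sequence of balls has nonempty intersection by compactness, giving spherical completeness, and the Main theorem then applies. The only difference is that you explicitly verify the completeness and separability hypotheses of the theorem (which the paper leaves implicit), which is a reasonable bit of extra care.
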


\begin{proof}
  Balls (both closed and open) in an ultrametric space are closed.
  If $M$ is proper, then balls are compact and
  the intersection of a nested sequence of balls has
  non-empty intersection, so $M$ is spherically complete.
\end{proof}

Let $M$ be a separable ultrametric space.
We recall and fix some notation from~\cite[Proof of Theorem~1]{MR3530906}, 
which we use throughout the section. Let $S=\{s_n\colon n\in \mathbb{N}\}$ 
be a countable dense one-to-one sequence in $M$ with $s_1=0$. 
For all $n\in \mathbb{N}$ and $x\in M$, denote 
 \begin{align*}
 S_n&=\{s_1,\dotsc,s_n\},\\
 I_n(x)&=\{k\leq n\colon d(x,S_n)=d(x,s_k)\},\\
 i_n(x)&=\min I_n(x).
 \end{align*}
 Define $r_n\colon M\to S_n$ by $r_n(x)=s_{i_n(x)}$.
 \begin{lem}[{\cite[Proof of Theorem~1]{MR3530906}}]\label{lem: r_n is a retract}
    Let $n\in \mathbb{N}$. The function $r_n\colon M\to S_n$ is a $1$-Lipschitz retraction, i.e. $r_n(x)=x$ for all $x\in S_n$, and for all $x,y\in M$,
 \[
 d(r_n(x),r_n(y)) \leq d(x,y).
 \]
 Furthermore, $r_n \circ r_m = r_{\min\{n,m\}}$.
\end{lem}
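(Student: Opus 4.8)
The plan is to verify the three assertions in turn, treating the $1$-Lipschitz estimate as the heart of the matter. Throughout I write $i=i_n(x)$ and $j=i_n(y)$, so that $r_n(x)=s_i$ and $r_n(y)=s_j$. For the \emph{retraction property}, if $x=s_k$ with $k\le n$ then $d(x,s_k)=0$, so $d(x,S_n)=0$; since the sequence $(s_m)$ is one-to-one, $s_k$ is the only point of $S_n$ at distance $0$ from $x$, whence $I_n(x)=\{k\}$ and $r_n(x)=s_k=x$.

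The \emph{$1$-Lipschitz estimate} is the step I expect to be the main obstacle, and it is exactly where the strong triangle inequality and the minimal-index tie-breaking in the definition of $i_n$ interact. If $i=j$ there is nothing to prove, so I would assume $i\neq j$ and, for contradiction, that $d(s_i,s_j)>d(x,y)$. By the strong triangle inequality $d(s_i,s_j)\le \max\{d(s_i,x),d(x,y),d(y,s_j)\}$, and since $d(x,y)<d(s_i,s_j)$ the maximum must be attained by one of the outer terms; after possibly interchanging the roles of $(x,i)$ and $(y,j)$ I may assume $d(x,s_i)>d(x,y)$. Then $d(x,s_i)\neq d(x,y)$, so Lemma~\ref{lem: prop_ultram}\ref{item:1} yields $d(y,s_i)=d(x,s_i)$. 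The key is now to combine this with the two minimality inequalities $d(y,s_j)=d(y,S_n)\le d(y,s_i)$ and $d(x,s_i)=d(x,S_n)\le d(x,s_j)$ and with $d(x,s_j)\le\max\{d(x,y),d(y,s_j)\}$: since $d(x,s_j)\ge d(x,s_i)>d(x,y)$, the last inequality forces $d(y,s_j)\ge d(x,s_j)$, and the chain of estimates collapses to the four equalities $d(x,s_i)=d(x,s_j)=d(y,s_i)=d(y,s_j)$. This says precisely $j\in I_n(x)$ and $i\in I_n(y)$, so minimality of $i=\min I_n(x)$ and $j=\min I_n(y)$ gives $i\le j$ and $j\le i$, i.e. $i=j$, contradicting our assumption. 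Hence $d(s_i,s_j)\le d(x,y)$.

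For the identity $r_n\circ r_m=r_{\min\{n,m\}}$ I would split into two cases. If $m\le n$, then $r_m(x)\in S_m\subseteq S_n$, and the retraction property already established gives $r_n(r_m(x))=r_m(x)=r_{\min\{n,m\}}(x)$ immediately. The remaining case $m>n$ amounts to showing $i_n(r_m(x))=i_n(x)$. Writing $p=r_m(x)$, the essential input is that $d(x,p)=d(x,S_m)\le d(x,s)$ for every $s\in S_n$, because $S_n\subseteq S_m$; feeding this into the strong triangle inequality gives $d(p,s)\le d(x,s)$ for all $s\in S_n$, and comparing the nearest points of $S_n$ to $x$ and to $p$ through Lemma~\ref{lem: prop_ultram}\ref{item:1}, as in the Lipschitz step, shows that $x$ and $p$ realise the same distance to $S_n$ and the same minimising index. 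I would organise this comparison by cases according to whether $d(x,p)<d(x,S_n)$ or $d(x,p)=d(x,S_n)$: in the former case every nearest point of $S_n$ to $x$ remains nearest to $p$ and conversely, while in the latter case a minimality argument (again using $S_n\subseteq S_m$) shows that $p$ already lies in $S_n$ and coincides with $r_n(x)$, so that $r_n(p)=p=r_n(x)$ by the retraction property. Either way $r_n(r_m(x))=r_n(x)=r_{\min\{n,m\}}(x)$, completing the proof.

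The routine parts are the retraction property and the case $m\le n$ of the composition identity; the genuine work, and the place where ultrametricity is indispensable, is the $1$-Lipschitz estimate, whose argument then also drives the case $m>n$ of the composition identity.
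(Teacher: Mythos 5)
Your proof is correct; all three parts (the retraction property, the $1$-Lipschitz estimate via the strong triangle inequality combined with the minimal-index tie-breaking, and the two cases of the composition identity) check out, including the subtle case $d(x,r_m(x))=d(x,S_n)$ where you correctly deduce $i_m(x)=i_n(x)$. The paper itself gives no proof of this lemma --- it is quoted from the proof of Theorem~1 in C\'uth and Doucha's paper --- so there is nothing in the text to compare against, but your argument is the natural one and uses exactly the tools (Lemma~\ref{lem: prop_ultram}\ref{item:1} and minimality of $i_n$) that the cited source relies on.
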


Let us introduce a subset of $\Lip_0(M)$, 
which will be our candidate for a predual of $\mathcal{F}(M)$ when $M$ is spherically complete.
\begin{defn}\label{defn: predual Y of F(M)}
  Let $Y$ be the set of Lipschitz functions $f\in \Lip_0(M)$
  satisfying:
  for all $\varepsilon>0$,
  there exists $N\in \mathbb{N}$ such that
  \begin{equation*}
    r_N(x) = r_N(y)\implies 
    |f(x)-f(y)| \le \varepsilon d(x,y).
  \end{equation*}
  We will sometimes write $N(\varepsilon)$ or $N_f(\varepsilon)$
  to highlight $N$'s dependence on $\varepsilon$ and $f$.
\end{defn}
\noindent
It is important to keep in mind that $Y$ is not necessarily uniquely defined by $M$;
the definition may depend on the ordering of a countable dense one-to-one sequence $S$ in $M$.
From now on, we assume that such an $S$ is fixed whenever $Y$ is mentioned.

\begin{prop}\label{prop:Y-closed_sep_subspace_of_lip0u}
  The set $Y$ has the following properties:
  \begin{enumerate}
  \item\label{item:Y-props1}
    $Y$ is a closed linear subspace of $\Lip_0(M)$
    that is norming;
  \item\label{item:Y-props2}
    $Y$ is a subspace of $\lip_0^u(M)$.
  \end{enumerate}
\end{prop}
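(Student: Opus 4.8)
The plan is to handle the two items separately, the whole argument being driven by the interplay between the retractions $r_N$ and the defining condition of $Y$ (Definition~\ref{defn: predual Y of F(M)}). I first record a monotonicity remark: by Lemma~\ref{lem: r_n is a retract} we have $r_{N'} \circ r_N = r_{\min\{N,N'\}}$, so if $N' \le N$ then $r_N(x) = r_N(y)$ implies $r_{N'}(x) = r_{N'}(y)$; consequently, once the flatness estimate in the definition of $Y$ holds for some index it holds for every larger index. With this, the fact that $Y$ is a linear subspace is routine: for $f,g \in Y$ and $\varepsilon > 0$ I would pick a common index $N$ working for both (feeding $\varepsilon/2$ to each) and add the two estimates; closedness is the same computation applied to $f$ and an approximant $f_n$ with $\|f - f_n\| < \varepsilon/2$.

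The substantive part of~\ref{item:Y-props1} is norming, and the key observation is that $g \circ r_N \in Y$ for every $g \in \Lip_0(M)$ and every $N$: since $r_N$ is $1$-Lipschitz and fixes $0$ we have $\|g \circ r_N\| \le \|g\|$ and $(g\circ r_N)(0)=0$, and the defining condition holds with the same $N$ for free, because $r_N(x) = r_N(y)$ forces $g(r_N(x)) = g(r_N(y))$. To norm $\mathcal{F}(M)$ it suffices to norm the dense set of finitely supported elements, since $\mu \mapsto \sup_{f \in B_Y}|f(\mu)|$ is a seminorm bounded above by $\|\cdot\|$ and hence, by the triangle inequality, agreement on a dense set forces agreement everywhere. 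Given a finitely supported $\mu = \sum_i a_i \delta_{x_i}$ and $\eta > 0$, I would take $g \in B_{\Lip_0(M)}$ with $g(\mu) \ge \|\mu\| - \eta$; as $S$ is dense we have $r_N(x_i) \to x_i$, so by continuity $(g\circ r_N)(\mu) = \sum_i a_i\, g(r_N(x_i)) \to \sum_i a_i\, g(x_i) = g(\mu)$. Since each $g \circ r_N$ lies in $B_Y$, this gives $\sup_{f \in B_Y} f(\mu) \ge \|\mu\|$, proving norming.

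For~\ref{item:Y-props2} I would isolate two elementary consequences of Lemma~\ref{lem: prop_ultram}, setting $\rho_N = \min_{1\le i<j\le N} d(s_i,s_j)$ and $D_N = \max_{1\le i\le N} d(s_i,0)$: (A) $d(x,y) < \rho_N$ implies $r_N(x)=r_N(y)$, i.e. $r_N$ is constant on balls of radius $\rho_N$; and (B) $d(x,0) > D_N$ implies $r_N(x) = 0$. Fact~(A) yields uniform local flatness immediately: given $\varepsilon$, take the index $N$ from the definition of $Y$ and put $\delta = \rho_N$. For flatness at infinity, fix $\varepsilon$, take the corresponding $N$, and let $R \ge D_N$. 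For a pair $x \ne y$ with $x \notin B[0,R]$, fact~(B) gives $r_N(x) = 0 = r_N(0)$. If moreover $d(y,0) > D_N$, then $r_N(y) = 0 = r_N(x)$ and $|f(x)-f(y)| \le \varepsilon d(x,y)$. Otherwise $d(y,0) \le D_N < d(x,0)$, whence $d(x,y) = d(x,0) > R$ by Lemma~\ref{lem: prop_ultram}\ref{item:1}, and
\[
  |f(x)-f(y)| \le |f(x)-f(0)| + |f(y)| \le \varepsilon d(x,y) + \|f\| D_N,
\]
so the ratio is at most $\varepsilon + \|f\| D_N / R$. Letting $R \to \infty$ bounds the relevant supremum by $\varepsilon$, and since $\varepsilon$ was arbitrary, $f$ is flat at infinity.

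The step I expect to be the main obstacle is the verification of fact~(A): because the nearest-point retraction $r_N$ resolves ties by least index, one must check that perturbing $x$ by less than $\rho_N$ alters neither the set of nearest points in $S_N$ nor the tie-break. This is where the strong triangle inequality does the real work, via a case split according to whether $d(x,y)$ is smaller or larger than $d(x,S_N)$ (in the latter case $\rho_N$-separation forces the nearest point to be unique). Everything else in the proof reduces to facts~(A) and~(B) together with the observation that $g \circ r_N \in Y$.
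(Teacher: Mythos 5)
Your argument is correct, and for linearity, closedness, and item (ii) it follows essentially the same route as the paper: a common index $N$ via the identity $r_{N'}\circ r_N=r_{\min\{N,N'\}}$, the observation that $d(x,y)<\min_{i\neq j\le N}d(s_i,s_j)$ forces $r_N(x)=r_N(y)$, and the observation that $r_N$ collapses everything outside a large ball to $0$. Two remarks. First, the step you single out as the main obstacle --- your fact (A) --- is a one-liner given the paper's Lemma~\ref{lem: r_n is a retract}: since $r_N$ is $1$-Lipschitz, $d(r_N(x),r_N(y))\le d(x,y)<\rho_N$ with both points in $S_N$, so they coincide; no analysis of the tie-breaking rule is needed. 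Second, your norming argument genuinely differs from the paper's: the paper works only with molecules supported on the dense sequence $S$, extends a function from a finite subset of $S_N$ by McShane--Whitney, composes with $r_N$, and then invokes Kalton's Proposition~3.3 to conclude; you instead norm arbitrary finitely supported elements directly, by taking a near-norming $g\in B_{\Lip_0(M)}$ and using that $g\circ r_N\in B_Y$ together with $r_N(x_i)\to x_i$, then pass to all of $\mathcal{F}(M)$ by a standard seminorm/density argument. Your version is more self-contained (it avoids both the external citation and the extension theorem) at the cost of a slightly longer limiting argument; both are valid.
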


\begin{proof}
  \ref{item:Y-props1}.
  Let us first show that $Y$ is a linear subspace.
  It is clear that $\alpha f \in Y$ if $f \in Y$
  and $\alpha \in \mathbb{R}$ (use $N(\varepsilon/|\alpha|)$).
  Assume $f,g \in Y$ and $\varepsilon > 0$.
  Find $N_f(\varepsilon/2)$ and $N_g(\varepsilon/2)$.
  Let $N = \max(N_f,N_g)$.
  If $r_N(x) = r_N(y)$, then using Lemma~\ref{lem: r_n is a retract}
  \begin{equation*}
    r_{N_f}(x) = r_{N_f}(r_N(x))
    = r_{N_f}(r_N(y)) = r_{N_f}(y)
  \end{equation*}
  and similarly for $N_g$ so that
  \begin{equation*}
    |(f+g)(x) - (f+g)(y)|
    \le
    |f(x) - f(y)| + |g(x) - g(y)|
    \le
    (\varepsilon/2 + \varepsilon/2)d(x,y).
  \end{equation*}
  This shows that $f + g \in Y$.

  Next we show that $Y$ is closed.  Assume that
  $(f_k)$ in $Y$ satisfies $f_k \to f \in \Lip_0(M)$.  
  Choose $n_0\in \mathbb{N}$ so
  that $\|f_{n_0} - f\| \le \varepsilon/2$ and choose $N\in \mathbb{N}$ so that
  $|f_{n_0}(x) - f_{n_0}(y)| \le \varepsilon d(x,y)/2$ whenever
  $r_N(x) = r_N(y)$.
  If $x, y \in M$ with $r_N(x) = r_N(y)$, then
  \begin{equation*}
    |f(x) - f(y)| \le
    |(f-f_{n_0})(x) - (f-f_{n_0})(y)|
    +
    |f_{n_0}(x) - f_{n_0}(y)|
    \le
    \varepsilon d(x,y)
  \end{equation*}
  as desired.

  Note that $\mathcal{F}(M)$ is isometrically isomorphic to $\mathcal{F}(S)$.
  To show that $Y$ is norming, it is enough, by \cite[Proposition~3.3]{MR2068975},
  to let $A$ be a finite subset of $S$ and $f \in \Lip_0(A)$,
  and show that there exists $g \in Y$ with $\|g\| = \|f\|$ and $g(x) = f(x)$ for all $x \in A$.
  Find the minimal $N$ so that $A \subset S_N$.
  Extend $f$ to $\hat{f}$ on $S_N$ by McShane--Whitney.
  Define $g = \hat{f} \circ r_N \in \Lip_0(M)$.
  Then $\|g\| = \|f\|$ and if $r_N(x) = r_N(y)$,
  then $g(x) = g(y)$, so $g \in Y$.

  \ref{item:Y-props2}.
    Let $f\in Y$, let $\varepsilon>0$, and let $N\in \mathbb{N}$ be such that $N\geq 1/\varepsilon$ and for all $x,y\in M$,
    \[
        r_N(x)=r_N(y)\Longrightarrow |f(x)-f(y)|\leq \varepsilon d(x,y).
    \]

    First, we show that $f$ is uniformly locally flat. Let $\delta=\min\{d(x,y)\colon x,y\in S_N,\ x\neq y\}$ and let $x,y\in M$ with $d(x,y)<\delta$. Then
    \[
    d(r_N(x),r_N(y))\leq d(x,y)< \delta.
    \]
    By the definition of $\delta$, we have $r_N(x)=r_N(y)$, which implies
    \[
    |f(x)-f(y)|\leq \varepsilon d(x,y).
    \]

    Next, we show that $f$ is flat at infinity.
    Let $R =\max_{s\in S_N} d(s,0)$.
    For all $z\in M$ with $d(z,0)> R$ and $s\in S_N$, we have $d(s,0)<d(z,0)$, which implies $d(z,s)=d(z,0)$. Therefore, $d(z,S_N)=d(z,0)$, i.e. $r_N(z)=0$.

    Let $x,y\in M$ and assume without loss of generality that $x\not\in B(0,NR)$. If $d(y,0)\leq R$, then $d(x,y)=d(x,0)\geq N R$ and
    \begin{align*}
    |f(x)-f(y)|&\leq |f(x)|+|f(y)|\leq \varepsilon d(x,0)+d(y,0)\leq \varepsilon d(x,y)+R \\
    &\leq \varepsilon d(x,y)+\frac{1}{N}d(x,y)=  (\varepsilon+\frac{1}{N}) d(x,y)\leq 2\varepsilon d(x,y).
    \end{align*}
    If $d(y,0)>R$, then $r_N(y)=0=r_N(x)$, and therefore,
    \[
    |f(x)-f(y)|\leq \varepsilon d(x,y). \qedhere
    \]
\end{proof}

Our next aim is to show that every $f \in Y$
attains its norm when $M$ is spherically complete.
The proof depends on a few lemmas which we will prove first.
We know that for $f \in S_Y$,
there exist sequences $(x_k)$ and $(y_k)$ in $M$ satisfying
\begin{equation*}
  \lim_k \frac{f(x_k) - f(y_k)}{d(x_k,y_k)} = 1.
\end{equation*}
Since $f$ is flat at infinity, both
$(x_k)$ and $(y_k)$ can be assumed to be bounded.
The next lemma connects $f$'s values on $x_k$
with its values on $r_{n_k}(x_k)$ for an increasing sequence
$(n_k)$ of natural numbers.
Note that we do not know if $(r_{n_k}(x_k))$ converges
in this lemma.
The subsequent lemmas will use spherical completeness
to extract a subsequence of $(x_k)$ so that we have convergence.
This allows us to replace $(x_k)$ and $(y_k)$ with sequences
of elements from $S$ and show norm-attainment of $f$.

\begin{lem}\label{lem: f(x_k) converges if and only if f(r_n(x_k)) converges}
  Let $f\in Y$, let $(x_k)$ be a bounded sequence in $M$,
  and let $(n_k)$ be a strictly increasing sequence in $\mathbb{N}$.
  Then
  \begin{equation*}
    \lim_k \big(f(x_k)-f(r_{n_k}(x_{k}))\big)=0.
  \end{equation*}
\end{lem}

\begin{proof}
  Let $R= \sup_{k\in \mathbb{N}} d(x_k,s_1)$, let $\varepsilon>0$,
  and let $N\in \mathbb{N}$ be such that
  \begin{equation*}
    r_N(x)=r_N(y)\Longrightarrow |f(x)-f(y)|\leq \frac{\varepsilon}{R}.
  \end{equation*}
  For $k\geq N$, we have $r_N(r_{n_k}(x_k))=r_N(x_k)$, and therefore,
  \begin{equation*}
    |f(x_k)-f(r_{n_k}(x_k))|\leq \frac{\varepsilon}{R}
    d(x_k,r_{n_k}(x_k))\leq \frac{\varepsilon}{R} d(x_k,s_1)\leq
    \varepsilon.
    \qedhere
  \end{equation*}
\end{proof}

\begin{lem}\label{lem: s_n distances decreasing}
    Let $M$ be spherically complete and let $(s_{n_j})$ be a pseudo-Cauchy subsequence of $S$ satisfying $r_{n_j}(s_{n_{j+1}})=s_{n_j}$ for all $j\in \mathbb{N}$. Then $(s_{n_j})$ is Cauchy.
\end{lem}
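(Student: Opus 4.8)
The plan is to reduce everything to the distances between consecutive terms. Since a sequence in an ultrametric space is Cauchy precisely when the distance between consecutive terms tends to $0$, it suffices to show $\lim_j d_j = 0$, where I write $d_j = d(s_{n_j}, s_{n_{j+1}})$. Because $(s_{n_j})$ is pseudo-Cauchy, applying the definition to the three indices $j < j+1 < j+2$ gives $d_{j+1} < d_j$, so $(d_j)$ is strictly decreasing and converges to some $L \ge 0$. I would argue by contradiction, assuming $L > 0$, and derive that some term of $S$ lies strictly closer to $s_{n_{j+1}}$ than $s_{n_j}$ does, in violation of the retraction hypothesis.

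The engine of the argument is spherical completeness. By Proposition~\ref{prop:SpComplete=PseudoCauchyLimit}, the pseudo-Cauchy sequence $(s_{n_j})$ admits a pseudo-limit $x \in M$, so that $d(s_{n_j}, x)$ is strictly decreasing in $j$. Combining $d(s_{n_{j+1}}, x) < d(s_{n_j}, x)$ with the strong triangle inequality in the form of Lemma~\ref{lem: prop_ultram}\ref{item:1} applied to $s_{n_j}, s_{n_{j+1}}, x$, I would obtain
\begin{equation*}
  d_j = d(s_{n_j}, s_{n_{j+1}}) = \max\{d(s_{n_j}, x), d(s_{n_{j+1}}, x)\} = d(s_{n_j}, x),
\end{equation*}
so that $d(s_{n_j}, x) = d_j > L$ for every $j$.

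Next I would exploit density. Since $L > 0$ and $S$ is dense, choose $s_m \in S$ with $d(s_m, x) < L$. Then for each $j$ we have $d(s_m, x) < L < d_{j+1} = d(s_{n_{j+1}}, x)$, so Lemma~\ref{lem: prop_ultram}\ref{item:1} applied to $s_{n_{j+1}}, s_m, x$ yields $d(s_{n_{j+1}}, s_m) = d_{j+1}$. Comparing with $d(s_{n_{j+1}}, s_{n_j}) = d_j$ and using $d_{j+1} < d_j$ shows that $s_m$ is strictly closer to $s_{n_{j+1}}$ than $s_{n_j}$ is. Finally, fixing $j$ large enough that $m \le n_j$ puts $s_m \in S_{n_j}$; but $r_{n_j}(s_{n_{j+1}}) = s_{n_j}$ forces $s_{n_j}$ to realise $d(s_{n_{j+1}}, S_{n_j})$, which contradicts the existence of the strictly closer point $s_m \in S_{n_j}$. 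Hence $L = 0$ and $(s_{n_j})$ is Cauchy.

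The main obstacle I anticipate is the middle step: pinning down the identity $d(s_{n_j}, x) = d_j$ and then carrying out the ultrametric distance computations carefully enough that the single, fixed point $s_m$ beats $s_{n_j}$ for all large $j$ simultaneously. Once these exact-distance equalities are in place, translating the hypothesis $r_{n_j}(s_{n_{j+1}}) = s_{n_j}$ into the nearest-neighbour statement ``$s_{n_j}$ minimises the distance to $s_{n_{j+1}}$ over $S_{n_j}$'' is what delivers the contradiction, and it is precisely here that both spherical completeness (to produce $x$) and density (to produce $s_m$) are indispensable.
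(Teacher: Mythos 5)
Your proposal is correct and follows essentially the same route as the paper: both arguments produce a pseudo-limit $x$ via spherical completeness, establish the key identity $d(s_{n_j},s_{n_{j+1}})=d(s_{n_j},x)$ from the strong triangle inequality, and then use density of $S$ to find a point of $S_{n_j}$ strictly closer to $s_{n_{j+1}}$ than $s_{n_j}$, which the retraction hypothesis forbids. The only difference is presentational: the paper runs the estimate directly for an arbitrary $\varepsilon>0$, whereas you phrase it as a contradiction with $L=\lim_j d_j>0$.
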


\begin{proof}
  Since $M$ is spherically complete, there exists by
  Proposition~\ref{prop:SpComplete=PseudoCauchyLimit}
  a pseudo-limit $x$ of $(s_{n_j})$.
  For all $j\in \mathbb{N}$, we have
  $d(x,s_{n_{j+1}}) < d(x,s_{n_j})$, and therefore,
  $d(x,s_{n_j})=d(s_{n_j},s_{n_{j+1}})$.
  
  Let $\varepsilon > 0$.
  Since $S$ is dense in $M$, we can
  find $N\in \mathbb{N}$ with $d(s_N,x) < \varepsilon$.
  For all $n_j > N$, we have $r_{n_j}(s_{n_{j+1}})=s_{n_j}$ 
  by our assumption and hence
  \begin{align*}
    d(s_{n_j},s_{n_{j+1}})
    &< d(s_N,s_{n_{j+1}})
      \le \max\{d(s_N,x),d(x,s_{n_{j+1}})\} \\
    &\leq \max\{\varepsilon, d(s_{n_j},s_{n_{j+1}})\}
      = \varepsilon.
  \end{align*}
  This shows that $\lim_j d(s_{n_j},s_{n_{j+1}}) = 0$,
  hence $(s_{n_j})$ is Cauchy.
\end{proof}

\begin{lem}\label{lem: subsequence comverge}
    Let $M$ be spherically complete and let $(x_k)$ be a sequence in $M$. There exists a subsequence $(x_{k_n})$ of $(x_k)$ such that $(r_n(x_{k_n}))_{n\in \mathbb{N}}$ converges and $r_l(x_{k_n})=r_l(x_{k_l})$ for all $l\leq n$.
\end{lem}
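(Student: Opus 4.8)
The plan is to produce the subsequence by a diagonal argument exploiting that each $S_n$ is finite, and then to show that the diagonal values $t_n := r_n(x_{k_n})$ converge by recognising them as (essentially) a pseudo-Cauchy subsequence of $S$, to which Lemma~\ref{lem: s_n distances decreasing} applies. The compatibility requirement $r_l(x_{k_n}) = r_l(x_{k_l})$ will fall out of the construction for free; the real work is the convergence of $(t_n)$.

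First I would build nested infinite index sets. Set $A_0 = \mathbb{N}$, and given an infinite $A_{n-1}$, use that $r_n$ takes values in the finite set $S_n$ to find, by the pigeonhole principle, a value $t_n \in S_n$ for which $A_n := \{ k \in A_{n-1} : r_n(x_k) = t_n \}$ is infinite. Picking $k_1 < k_2 < \cdots$ with $k_n \in A_n$ yields a subsequence with $r_n(x_{k_n}) = t_n$. For $l \le n$ we have $k_n \in A_n \subseteq A_l$, so $r_l(x_{k_n}) = t_l = r_l(x_{k_l})$, which is precisely the required compatibility. Applying $r_m$ for $m \le n$ and using $r_m \circ r_n = r_{\min\{m,n\}} = r_m$ from Lemma~\ref{lem: r_n is a retract} gives the key coherence relation $r_m(t_n) = r_m(x_{k_n}) = t_m$.

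Next I would analyse the shape of $(t_n)$. Since $r_n$ fixes $S_n$ pointwise, $t_{n+1} \neq t_n$ forces $t_{n+1} = s_{n+1}$; thus $(t_n)$ is a staircase, constant on consecutive blocks and taking a fresh generator as value at each jump. If there are only finitely many jumps then $(t_n)$ is eventually constant and we are done. Otherwise let $(v_j) = (s_{m_j})$ with $m_1 = 1 < m_2 < \cdots$ be the sequence of distinct consecutive values; the coherence relation gives $r_{m_j}(s_{m_{j+1}}) = r_{m_j}(v_{j+1}) = v_j = s_{m_j}$, which is exactly the retraction hypothesis of Lemma~\ref{lem: s_n distances decreasing}.

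Finally I would verify that $(v_j)$ is pseudo-Cauchy. Here $v_{j+1} = r_{m_{j+1}}(v_{j+2})$ is the closest point of $S_{m_{j+1}}$ to $v_{j+2}$ under the minimal-index tie-break, while $v_j \in S_{m_j} \subseteq S_{m_{j+1}}$ has index $m_j < m_{j+1}$; so the equality $d(v_{j+2}, v_{j+1}) = d(v_{j+2}, v_j)$ would make $r_{m_{j+1}}$ select $v_j$ rather than $v_{j+1}$, a contradiction. Hence $d(v_{j+2}, v_{j+1}) < d(v_{j+2}, v_j)$, and the isosceles property Lemma~\ref{lem: prop_ultram}\ref{item:1} upgrades this to $d(v_j, v_{j+1}) = d(v_j, v_{j+2}) > d(v_{j+1}, v_{j+2})$, so consecutive distances strictly decrease; in an ultrametric this is readily seen to imply pseudo-Cauchy. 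Lemma~\ref{lem: s_n distances decreasing} then yields that $(v_j)$ is Cauchy, and since a spherically complete space is complete it converges, whence the reindexing $(t_n)$ converges to the same limit. The main obstacle is precisely this pseudo-Cauchy step: extracting the strict decrease of consecutive distances genuinely relies on the minimal-index convention built into $r_n$, and it is here that spherical completeness is indispensable, since strictly decreasing distances need not tend to $0$ in a merely complete space.
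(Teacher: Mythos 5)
Your proposal is correct and takes essentially the same route as the paper: a pigeonhole/diagonal extraction over the finite sets $S_n$, followed by recognising the sequence of distinct values of the diagonal as a pseudo-Cauchy subsequence of $S$ satisfying the retraction hypothesis of Lemma~\ref{lem: s_n distances decreasing}. The paper derives the strictly decreasing distances by comparing $d(r_{n_i}(y_{n_k}),y_{n_k})$ for increasing $i$, while you use the minimal-index tie-break directly on consecutive terms; these are the same underlying argument.
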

\begin{proof}
    The idea of the proof is to construct subsequences $(x^n_{k})_{k\in \mathbb{N}}$ 
    of $(x_k)_{k\in \mathbb{N}}$ inductively by $n\in \mathbb{N}$,
    and then show that $(x^n_n)_{n\in \mathbb{N}}$ 
    is a subsequence of $(x_k)$ that satisfies the conditions in the lemma, i.e. $(r_n(x^n_n))_{n\in \mathbb{N}}$ converges and $r_l(x_n^n)=r_l(x^l_l)$ for all $l\leq n$.
    
    First, let $x^1_k=x_k$ for all $k\in \mathbb{N}$ and note that $r_1(x^1_k)=s_1$. 
    
    Fix $n\in \mathbb{N}$ and assume that 
    for all $l\in \mathbb{N}$ with $2\leq l\leq n$, 
    there is a subsequence $(x^l_k)_{k\in \mathbb{N}}$ 
    of $(x^{l-1}_k)_{k\in \mathbb{N}}$ satisfying 
    $r_l(x^l_k)=r_l(x^l_l)$ for all $k\in \mathbb{N}$.
    
    Since $S_{n+1}$ is finite, 
    there exists a subsequence $(x^{n+1}_k)_{k\in \mathbb{N}}$
    of $(x^n_k)_{k\in \mathbb{N}}$ such that
    $r_{n+1}(x^{n+1}_k)=r_{n+1}(x^{n+1}_1)$ for all $k\in \mathbb{N}$. 
    For any $l\in \mathbb{N}$ with $1\leq l\leq n+1$ and $k\in \mathbb{N}$, 
    $r_l(x^{n+1}_k)=r_l(x^l_l)$ because $(x^{n+1}_k)_{k\in \mathbb{N}}$ 
    is a subsequence of $(x^l_k)_{k\in \mathbb{N}}$. 
    
    It remains to show that the sequence 
    $(y_n) = (r_n(x^n_{n}))$ converges. 
    This is clear if $(y_n)$ is eventually constant, 
    so assume that this is not the case. 
    
    Let $(y_{n_i})$ be a subsequence of $(y_n)$ 
    satisfying $y_{n_i}\neq y_{n_{i+1}}$ 
    for all $i\in \mathbb{N}$. 
    Note that for all $i,j\in \mathbb{N}$, $i<j$, we have
    \[
    r_{n_i}(y_{n_j})=r_{n_i}(r_{n_j}(x^{n_j}_{n_j}))=r_{n_i}(x^{n_j}_{n_j})=r_{n_i}(x^{n_i}_{n_i})=y_{n_i},
    \]
    and therefore, for $i,j,k\in \mathbb{N}$ with $i<j<k$, we have 
    \[
    r_{n_i}(y_{n_k})=y_{n_i}\neq y_{n_j}= r_{n_j}(y_{n_k}),
    \]
    which implies
    \[
    d(y_{n_i},y_{n_k})=d(r_{n_i}(y_{n_k}),y_{n_k})>d(r_{n_j}(y_{n_k}),y_{n_k})=d(y_{n_j},y_{n_k}).
    \]
    Thus, $(y_{n_i})$ is a pseudo-Cauchy sequence with $r_{n_i}(y_{n_{i+1}}) = y_{n_i}$.
    By Lemma \ref{lem: s_n distances decreasing}, 
    $(y_{n_i})$ is Cauchy, so $(y_n)$ converges.
\end{proof}

\begin{prop}\label{prop: norm-attainment} If $M$ is spherically complete, then every $f \in Y$ attains its norm.
\end{prop}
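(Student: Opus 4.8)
The plan is to take a maximising sequence of molecules and to produce an honest maximiser as a limit, using the retraction machinery to compensate for the lack of compactness. We may assume $\|f\|=1$ (rescale; if $f=0$, it attains its norm trivially). Choose pairs $x_k\neq y_k$ in $M$ with $\frac{|f(x_k)-f(y_k)|}{d(x_k,y_k)}\to 1$. Since $Y\subset\lip_0^u(M)$ by Proposition~\ref{prop:Y-closed_sep_subspace_of_lip0u}~\ref{item:Y-props2}, $f$ is uniformly locally flat and flat at infinity. Uniform local flatness applied with $\varepsilon=\tfrac12$ furnishes $\delta>0$ with $d(x_k,y_k)\ge\delta$ for all large $k$ (otherwise the slope would be $<\tfrac12$), while flatness at infinity furnishes $R>0$ with $x_k,y_k\in B[0,R]$ for all large $k$ (otherwise the slope would again be $<\tfrac12$, since the set $\{x\text{ or }y\notin B[0,R]\}$ is symmetric). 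After discarding finitely many terms I have a bounded maximising sequence of pairs that is uniformly separated.

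Next I extract limits. Running the construction in the proof of Lemma~\ref{lem: subsequence comverge} simultaneously for the two sequences $(x_k)$ and $(y_k)$---which is legitimate because at each step $S_{n+1}$ is finite, so one may stabilise the pair $(r_{n+1}(x_k),r_{n+1}(y_k))\in S_{n+1}\times S_{n+1}$ along a subsequence, and because the convergence of each retracted diagonal sequence is a coordinatewise statement settled by Lemma~\ref{lem: s_n distances decreasing}---I obtain a subsequence, still written $(x_k,y_k)$, together with points $u,v\in M$ (here completeness guarantees the limits lie in $M$) such that $r_n(x_n)\to u$ and $r_n(y_n)\to v$. Applying Lemma~\ref{lem: f(x_k) converges if and only if f(r_n(x_k)) converges} to the bounded sequences $(x_n)$ and $(y_n)$ with index sequence $n_k=k$, and using that $f$ is continuous, gives
\[
f(x_n)=\big(f(x_n)-f(r_n(x_n))\big)+f(r_n(x_n))\longrightarrow f(u),
\]
and likewise $f(y_n)\to f(v)$.

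Now I run the squeeze. From slope $\to1$ and $|f(x_n)-f(y_n)|\to|f(u)-f(v)|$, and since the denominators lie in $[\delta,2R]$, the denominators converge: $d(x_n,y_n)\to|f(u)-f(v)|=:D$, so in particular $D\ge\delta>0$. Since each $r_n$ is $1$-Lipschitz (Lemma~\ref{lem: r_n is a retract}), $d(r_n(x_n),r_n(y_n))\le d(x_n,y_n)$; letting $n\to\infty$ and using $r_n(x_n)\to u$, $r_n(y_n)\to v$ gives $d(u,v)\le D=|f(u)-f(v)|$. On the other hand $|f(u)-f(v)|\le\|f\|\,d(u,v)=d(u,v)$. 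Hence $d(u,v)=|f(u)-f(v)|=D\ge\delta>0$, so $u\neq v$ and $\frac{|f(u)-f(v)|}{d(u,v)}=1=\|f\|$; that is, $f$ attains its norm at the molecule $m_{u,v}$.

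The main obstacle is the extraction step: because $M$ need not be proper, bounded sequences need not have convergent subsequences, and the retraction index in Lemma~\ref{lem: subsequence comverge} is tied to the position in the sequence, so one cannot simply apply the lemma twice (first to $(x_k)$, then to $(y_k)$) and retain a common retraction index $n$ in $r_n(x_n)$ and $r_n(y_n)$---which is exactly what the $1$-Lipschitz squeeze $d(r_n(x_n),r_n(y_n))\le d(x_n,y_n)$ requires. This forces the simultaneous two-coordinate version of the extraction; everything else is a direct consequence of uniform local flatness, flatness at infinity, and the $1$-Lipschitz property of the retractions.
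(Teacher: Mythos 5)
Your proof is correct and follows essentially the same route as the paper: use $Y\subset\lip_0^u(M)$ to get a bounded, uniformly separated maximising sequence of pairs, extract via the retraction machinery of Lemmas~\ref{lem: subsequence comverge} and~\ref{lem: f(x_k) converges if and only if f(r_n(x_k)) converges}, and squeeze with the $1$-Lipschitz property of $r_n$. The only difference is bookkeeping in the extraction: you stabilise both coordinates simultaneously, whereas the paper applies Lemma~\ref{lem: subsequence comverge} sequentially and uses its compatibility clause $r_m(x_{k_l})=r_m(x_{k_m})$ for $m\le l$ to keep a common retraction index after the second extraction --- both resolutions of the difficulty you correctly identify are valid.
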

\begin{proof}
  Let $f \in S_Y$ and find $(x_k)$ and $(y_k)$ in $M$ satisfying
  \begin{equation*}
    \lim_k \frac{f(x_k) - f(y_k)}{d(x_k,y_k)} = 1.
  \end{equation*}
  By Proposition~\ref{prop:Y-closed_sep_subspace_of_lip0u}~\ref{item:Y-props2},
  $(x_k)$ and $(y_k)$ are bounded and there exists $\delta>0$ and $N\in \mathbb{N}$ such that $d(x_k,y_k)\geq \delta$ for all $k\geq N$. By Lemma~\ref{lem: subsequence comverge}, there exists a subsequence $(x_{k_l})$ of $(x_k)$ such that $r_l(x_{k_l})$ converges and $r_m(x_{k_l})=r_m(x_{k_m})$ for all $m\leq l$. Furthermore, there exists a subsequence $(y_{k_{l_m}})$ of $(y_{k_l})$ such that $r_m(y_{k_{l_m}})$ converges. Let $x=\lim_l r_{l}(x_{k_{l}})$ and $y=\lim_m r_{m}(y_{k_{l_m}})$. By Lemma~\ref{lem: f(x_k) converges if and only if f(r_n(x_k)) converges},
\begin{align*}
    \lim_m f(x_{k_{l_m}})=\lim_m f(r_{l_m}(x_{k_{l_m}}))= f(x)\quad \text{and} \quad \lim_m f(y_{k_{l_m}})=f(y).
\end{align*}
Therefore, 
\[
0<\delta\leq \lim_m d(x_{k_{l_m}},y_{k_{l_m}})=\lim_m
    \big(f(x_{k_{l_m}})-f(y_{k_{l_m}})\big)=f(x)-f(y)\leq d(x,y).
\]
On the other hand,
  \begin{align*}
  f(x)-f(y)&=\lim_m d(x_{k_{l_m}}, y_{k_{l_m}})\geq \lim_m d\bigl(r_m(x_{k_{l_m}}),r_m(y_{k_{l_m}})\bigr)\\
  &=\lim_m d\bigl(r_m(x_{k_{m}}),r_m(y_{k_{l_m}})\bigr)\\
  &=d(x,y).
  \end{align*}
  Therefore, $f(x)-f(y)=d(x,y)$.
\end{proof}

We have already noted in Corollary~\ref{cor:proper_is_dual}
that our method can recover Dalet's result \cite{MR3376824}
that for proper ultrametric spaces $M$ the free space
$\mathcal{F}(M)$ is a dual.
Dalet showed that $\lip_0^u(M)$ is a predual and
next we will show that actually our predual $Y$
is identical to $\lip_0^u(M)$ in this case.

\begin{lem}\label{lem: Y is lip_0 M if M is proper}
  We have $\lip^u_0(M) = Y$ whenever
  $M$ is a proper ultrametric space.
\end{lem}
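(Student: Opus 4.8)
The goal is to prove $\lip_0^u(M) = Y$ for a proper ultrametric space $M$. We already know from Proposition~\ref{prop:Y-closed_sep_subspace_of_lip0u}~\ref{item:Y-props2} that $Y \subseteq \lip_0^u(M)$, so the content of the lemma is the reverse inclusion $\lip_0^u(M) \subseteq Y$. I need to understand the structure. $Y$ consists of functions $f$ such that for every $\varepsilon > 0$ there is an $N$ with the property that $r_N(x) = r_N(y)$ forces $|f(x) - f(y)| \le \varepsilon d(x,y)$. So I need: given $f$ uniformly locally flat and flat at infinity, and given $\varepsilon$, find such an $N$.

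Let me think about what $r_N(x) = r_N(y)$ means. The function $r_N$ is a retraction onto $S_N = \{s_1, \dots, s_N\}$. Two points have the same image under $r_N$ when they are "closest" (with tie-breaking by index) to the same point of $S_N$. In an ultrametric setting, $r_N(x) = r_N(y) = s_k$ should mean $x, y$ are in some common ball around $s_k$ that is disjoint from the other points of $S_N$ — so $x, y$ lie in a ball of radius roughly $d(s_k, S_N \setminus \{s_k\})$. The key intuition: if $r_N(x) = r_N(y)$, then $x$ and $y$ are close to each other, OR they are both far out (if $r_N(x) = 0$ because $x$ is beyond the range of $S_N$). This is exactly the dichotomy that uniform local flatness (handles the "close" case) and flatness at infinity (handles the "far out" case) are designed to control.

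So the plan is: Let $f \in \lip_0^u(M)$ and $\varepsilon > 0$. I want to produce $N$. From flatness at infinity, get $R$ such that $\|f|_{M \setminus B[0,R]}\| \le \varepsilon$. From uniform local flatness, get $\delta > 0$ such that $d(x,y) < \delta \implies |f(x)-f(y)| \le \varepsilon d(x,y)$. Now I want to choose $N$ large enough — using the density of $S$ — so that $S_N$ is a $\delta$-net on $B[0,R]$, or more precisely so that for any two points with the same $r_N$-image that is NOT $0$, they are forced to be within distance $< \delta$, while if their common image is $0$ they are forced to be outside $B[0,R]$. Because $M$ is proper, $B[0,R]$ is compact, so a finite piece $S_N$ of the dense sequence can $\delta$-cover it, and the ultrametric structure (Lemma~\ref{lem: prop_ultram}) should convert "$r_N(x)=r_N(y)=s_k \ne 0$" into "$x,y$ lie in a small ball of radius $< \delta$ around $s_k$." The claim to verify is that for $N$ large, $d(s_k, S_N \setminus \{s_k\}) < \delta$ for each relevant $s_k$ in the net, so the $r_N$-cell around $s_k$ sits inside a ball of radius $< \delta$.

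Let me sketch the decisive geometric step more carefully, since I expect it to be the main obstacle. Suppose $r_N(x) = r_N(y) = s_k$ with $s_k \ne 0$. I want to bound $d(x,y)$. Fix $R$ and $\delta$ as above; WLOG $\delta \le $ something small. By compactness of $B[0,R]$ and density of $S$, choose $N$ so large that $S_N$ contains a point within $\delta/2$ (say) of every point of $B[0,R]$, and so large that $S_N$ contains many nearby points. The subtle part is the tie-breaking in the definition of $r_N$: $r_N(x) = s_k$ means $s_k$ is the minimal-index nearest point of $S_N$ to $x$. If $x \in B[0,R]$, then there is some $s_j \in S_N$ with $d(x, s_j) < \delta/2$, hence $d(x, S_N) < \delta/2$, so $d(x, s_k) = d(x, S_N) < \delta/2$; likewise $d(y, s_k) < \delta/2$ provided $y$ is also controlled. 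Then $d(x,y) \le \max\{d(x,s_k), d(y,s_k)\} < \delta$ by the ultrametric inequality, and uniform local flatness gives $|f(x)-f(y)| \le \varepsilon d(x,y)$. The genuinely delicate case is when $x$ or $y$ lies outside $B[0,R]$ while still having $r_N$-image $s_k \ne 0$; I must argue this cannot happen for $N$ chosen appropriately, or handle it separately. Concretely, I would argue: if $x \notin B[0,R]$ then (choosing $N$ so that $0 = s_1 \in S_N$ and $S_N \cap (M \setminus B[0,R])$ is suitably controlled) the nearest point behaves like the flat-at-infinity case, and one reduces to the estimate $|f(x)-f(y)| \le \varepsilon d(x,y)$ using $\|f|_{M\setminus B[0,R]}\| \le \varepsilon$, mirroring the computation in the proof of Proposition~\ref{prop:Y-closed_sep_subspace_of_lip0u}~\ref{item:Y-props2}. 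Assembling these cases — both points inside $B[0,R]$ (use uniform local flatness), or one/both outside (use flatness at infinity) — yields $|f(x)-f(y)| \le C\varepsilon\, d(x,y)$ for a fixed constant $C$, and since $\varepsilon$ was arbitrary this places $f$ in $Y$. The main obstacle is precisely managing the interaction between the index-based tie-breaking in $r_N$ and the geometry, i.e. showing that a common $r_N$-cell of nonzero center has diameter below $\delta$ once $N$ is large; properness and the ultrametric ball structure from Lemma~\ref{lem: prop_ultram} are what make this finite $N$ exist.
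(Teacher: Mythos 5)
Your proposal is essentially the paper's proof: use compactness of $B[0,R]$ and density of $S$ to make $S_N$ a $\delta$-net of $B[0,R]$, so that two points of $B[0,R]$ with the same $r_N$-image each lie within $\delta/2$ of that common image and hence within $\delta$ of each other by the ultrametric inequality, while flatness at infinity handles any pair with a point escaping $B[0,R]$. The only place you overcomplicate is the case you call ``genuinely delicate'': the paper's definition of flat at infinity already gives $|f(x)-f(y)|\le\varepsilon\,d(x,y)$ whenever $x$ \emph{or} $y$ lies outside $B[0,R]$, so that case needs no information about $r_N$ at all --- in particular you should not route it through $\|f|_{M\setminus B[0,R]}\|$ (which only controls pairs with both points outside) or through the computation in Proposition~\ref{prop:Y-closed_sep_subspace_of_lip0u}~\ref{item:Y-props2} (which uses the $Y$-membership you are trying to establish).
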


\begin{proof}
  We have already observed that every $f \in Y$
  is uniformly locally flat and flat at infinity
  in Proposition~\ref{prop:Y-closed_sep_subspace_of_lip0u}~\ref{item:Y-props2}.

  To finish the proof, it is enough to let
  $f \in \lip_0^u(M)$ and show that $f \in Y$.
  Let $\varepsilon > 0$.
  Find $R > 0$ so that
  \begin{equation*}
    |f(x) - f(y)| < \varepsilon d(x,y)
  \end{equation*}
  whenever $x$ or $y$ are not in $B[0,R]$ and $x \neq y$.
  Since $f$ is (uniformly) locally flat,
  there exists $\delta > 0$ such that
  $|f(x) - f(y)| < \varepsilon d(x,y)$
  whenever $d(x,y) < \delta$.

  By compactness there exists a finite covering
  $B(x_1,\delta), \ldots, B(x_K,\delta)$ of $B[0,R]$.
  Since $M$ is ultrametric, we may assume
  $B(x_i,\delta) \cap B(x_j,\delta) = \emptyset$ for $i \neq j$.
  Choose $N \ge K$ so that for each $k \le K$ there exists
  $1 \le j \le N$ with $s_j \in B(x_k,\delta)$.
  This ensures that if $r_N(x) = r_N(y)$ for $x,y \in B[0,R]$,
  then $x,y \in B(x_i,\delta)$ for some $i$ and hence
  $|f(x) - f(y)| \le \varepsilon d(x,y)$
  from the flatness of $f$.

  On the other hand, if $r_N(x) = r_N(y)$ and either
  $x$ or $y$ is not in $B[0,R]$, then
  $|f(x) - f(y)| \le \varepsilon d(x,y)$
  from the choice of $R$.
  Consequently, we get $f \in Y$.
\end{proof}
In Corollary~\ref{cor: ultrametric proper equiv}, 
we show that the reverse implication of the previous lemma also holds, 
i.e. $Y=\lip_0^u(M)$ if and only if $M$ is proper. 

A few examples of non-proper ultrametric spaces,
for which the Lipschitz-free space is a dual,
are previously known from literature. 
For instance, consider $M = \mathbb{N} \cup \{0\}$ with discrete metric, $d(n,m) = 1$ for $n \neq m$.
Obviously $M$ is not proper, but in the language of Weaver,
$M$ is ``rigidly locally compact'' and has the ``separation property'',
so the duality of $\mathcal{F}(M)$ was also known in this case
(see \cite[Corollary~5.5]{MR1386169}), with 
\begin{equation*}
  Z = \{ f \in \Lip_0(M) : \lim_n f(n) = 0 \}
\end{equation*}
as a predual for $\mathcal{F}(M)$.
It is clear that $M$ is a spherically complete ultrametric space
and it is not too difficult to show that
our predual $Y$ is exactly $Z$.

\subsection{\texorpdfstring{$M$}{M}-ideals of the space of Lipschitz functions}

In \cite{MR3376824}, it was shown that if $M$ is proper, then for any $\varepsilon>0$, the space $\lip_0^u(M)$ is $(1+\varepsilon)$-isometric to a subspace of $c_0$, which means that it is $M$-embedded. We show that $\lip_0^u(M)$ is an $M$-ideal in $\Lip_0(M)$ for any ultrametric space $M$. However, we first note that this is not the case for metric spaces in general. Namely, we give an example of a compact metric space $M$ for which $\lip_0^u(M)$ is not an $M$-ideal of $\Lip_0(M)$, thereby answering a question by Werner~\cite{MR4472717}. 

\begin{exmp}\label{exmp:noWerner}
    Let $M = [0,1]\cup\{p\}$ be a metric space where 
    \[
    d(x,y)=\begin{cases}
        |x-y|, & \text{if $x,y\in [0,1]$;}\\
        1/2, & \text{if $x=p$ or $y=p$, and $x\neq y$}.
    \end{cases}
    \]
    Then $\lip_0^u(M)$ is not an $M$-ideal in $\Lip_0(M)$.
\end{exmp}

\begin{proof}
Since $\lip_0^u(M)$ is the range of the
  contractive projection $Q$ on $\Lip_0(M)$ defined by
  \[
    Qf(x)
    =
    \begin{cases}
      0, & \text{if $x \in [0,1]$;}\\
      f(x), & \text{if $x = p$},
    \end{cases}
  \]
  we get from \cite[Proposition~I.1.2 and Corollary~I.1.3]{MR1238713}
  that $\lip_0^u(M)$ is an $M$-ideal in $\Lip_0(M)$ if and only if $Q$ satisfies
  \[
    \|f\|
    = \max\{\|Q(f)\|, \|f - Q(f)\|\},
  \]
  for all $f \in \Lip_0(M).$
  However, putting
  \begin{equation*}
    F(x)=\begin{cases}
      x, & \text{if $x\in [0,1]$;}\\
      1/2, &\text{if $x=p$},
    \end{cases}
  \end{equation*}
  and observing
  \begin{align*}
   \|F\|
   = 1 
    < 2
    = \|F - Q(F)\|
    = \max\{\|Q(F)\|, \|F - Q(F)\|\},
  \end{align*}
  we see that $\lip_0^u(M)$ is not an $M$-ideal in $\Lip_0(M)$.
\end{proof}

Let $Z$ be a subspace of a Banach space $X$.
Recall the \emph{3-ball property} characterisation
of $M$-ideals (see e.g. \cite[Theorem~I.2.2]{MR1238713})
which says that $Z$ is an $M$-ideal of $X$ if and only
if for all $z_1, z_2, z_3 \in B_Z,$ all $x \in B_X,$ and all
$\varepsilon > 0$, there is $z \in Z$ such that
\[
  \|z_i + x - z\|
  \le 1 + \varepsilon
\]
for all $i = 1, 2, 3.$

\begin{prop}\label{prop: unif locally flat M-ideal in Lip_0(M)}
    Let $M$ be an ultrametric space, then the space $\lip^u_0(M)$ is an $M$-ideal in $\Lip_0(M)$.
\end{prop}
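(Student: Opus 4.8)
The plan is to verify the $3$-ball property recalled just above for $Z = \lip^u_0(M)$ inside $X = \Lip_0(M)$. Fix $z_1,z_2,z_3 \in B_{\lip^u_0(M)}$, $x \in B_{\Lip_0(M)}$ and $\varepsilon > 0$; I must find $z \in \lip^u_0(M)$ with $\|z_i + x - z\| \le 1+\varepsilon$ for $i=1,2,3$. As there are only three functions, I may fix a common threshold $\tau > 0$ with
\begin{equation*}
  d(a,b) < \tau \implies |z_i(a)-z_i(b)| \le \varepsilon\, d(a,b) \quad (i=1,2,3),
\end{equation*}
and a common radius $R>0$ such that $|z_i(a)-z_i(b)| \le \varepsilon\, d(a,b)$ whenever $a$ or $b$ lies outside $B[0,R]$. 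The idea is to produce a single $z \in \lip^u_0(M)$ that reproduces $x$ at the intermediate scales $\tau \le d(a,b) \le R$, where the $z_i$ may have norm up to $1$, while being flat for $d(a,b) < \tau$ and outside $B[0,R]$, where the $z_i$ are negligible. Morally, on the flat part $z_i + x - z \approx x - z$ and on the intermediate part $z_i + x - z \approx z_i$, so in both regimes the Lipschitz ratio should stay near $1$.

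For the construction I use the nested nets $C_n$, $n \in \mathbb{Z}$, of Lemma~\ref{lem: partitioning} for a fixed $q \in (0,1)$, together with the retraction $\pi_n \colon M \to C_n$ sending each point to the centre of the unique ball $B(c,q^n)$, $c \in C_n$, that contains it. Lemma~\ref{lem: prop_ultram} shows that $\pi_n$ is $1$-Lipschitz, that $\pi_m \circ \pi_n = \pi_{\min\{m,n\}}$, and that $\pi_n(a) \to a$ as $n \to \infty$ while $\pi_n(a) = 0$ for $n$ small enough (depending on $a$). A single composition $x \circ \pi_N$ will not work: it is constant on $q^N$-balls, hence uniformly locally flat, but for $a,b$ in distinct nearby $q^N$-balls the residual $(x - x\circ\pi_N)(a) - (x - x\circ\pi_N)(b)$ can be of order $2q^N$ while $d(a,b)$ is only of order $q^N$, forcing the ratio up towards $2$. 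I therefore smooth the truncation across $K$ consecutive scales and put
\begin{equation*}
  z = \frac{1}{K}\sum_{j=0}^{K-1} x\circ\pi_{N+j}.
\end{equation*}
Then $\|z\| \le \|x\| \le 1$ and $z(0)=0$; moreover $z$ is constant on each ball $B(c,q^{N+K-1})$, so $z$ is uniformly locally flat. A symmetric averaging over coarse scales (damping the increments at scales larger than $R$) renders $z$ flat at infinity as well, so that $z \in \lip^u_0(M)$; if $M$ is bounded this step is unnecessary.

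The verification rests on one clean estimate. Writing $T_m = x(\pi_m(a)) - x(\pi_m(b))$ we have $z(a)-z(b) = \tfrac{1}{K}\sum_{m=N}^{N+K-1} T_m$ and $x(a)-x(b) = \lim_m T_m$, whence
\begin{equation*}
  \bigl|(x-z)(a) - (x-z)(b)\bigr|
  = \Bigl|\tfrac{1}{K}\sum_{m=N}^{N+K-1}\bigl(\lim_l T_l - T_m\bigr)\Bigr|
  \le \frac{1}{K}\sum_{m=N}^{N+K-1} 2q^m
  \le \frac{2q^N}{K(1-q)},
\end{equation*}
since $|\lim_l T_l - T_m| \le d(a,\pi_m(a)) + d(b,\pi_m(b)) < 2q^m$. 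Now split by the size of $d(a,b)$. If $d(a,b) \ge q^N$ the displayed bound already gives residual ratio $\le \tfrac{2}{K(1-q)}$, and combined with $\|z_i\|\le 1$ the increment of $z_i + x - z$ across $a,b$ is at most $(1 + \tfrac{2}{K(1-q)})\,d(a,b)$. If $d(a,b) < q^N$ then, choosing $N$ with $q^N \le \tau$, the pair is flat for each $z_i$; here the residual may be as large as $|x(a)-x(b)| \le d(a,b)$ (this is exactly the intended reproduction of $x$), but the extra smoothing error is again $O\bigl(1/(K(1-q))\bigr)\cdot d(a,b)$, so the total increment is at most $\bigl(1 + \varepsilon + \tfrac{2}{K(1-q)}\bigr)d(a,b)$. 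The region involving points outside $B[0,R]$ is handled identically using the coarse damping and the flatness of the $z_i$ at infinity. The main obstacle is precisely the transition scales $q^{N+K} \le d(a,b) \le q^N$, where a sharp cut-off fails; this is overcome by taking the ramp long, i.e. $K$ so large that $\tfrac{2}{K(1-q)} \le \varepsilon$. With $q$ fixed, then $N$ with $q^N \le \tau$, and finally $K$ large, all cases give Lipschitz ratio at most $1 + 2\varepsilon$, which (after relabelling $\varepsilon$) verifies the $3$-ball property and shows that $\lip^u_0(M)$ is an $M$-ideal in $\Lip_0(M)$.
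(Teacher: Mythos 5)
Your construction of $z$ by averaging $x\circ\pi_{N+j}$ over $K$ consecutive scales is a genuinely different, Kalton-style route from the paper's: the paper takes a single fine scale $q^{2N}$ and defines its correction $h$ on each ball $B(c,q^{2N})$ as the midpoint $\tfrac{1}{2}\bigl(\sup F+\inf F\bigr)$ over that ball, so that $|F-h|\le q^{2N}/2$ pointwise; that halving is exactly what absorbs the factor-of-two loss at the transition scale which your long ramp is designed to avoid. Your local estimates are essentially correct: the retractions $\pi_n$ are $1$-Lipschitz by the strong triangle inequality, $z$ is constant on $q^{N+K-1}$-balls, and in the transition regime the smoothing error is $O\bigl(q^{m_0}/(K(1-q))\bigr)$ where $q^{m_0}\le d(a,b)$ is the first active scale --- this last point is the step you leave implicit and should spell out, since your displayed bound $2q^{N}/(K(1-q))$ is useless when $d(a,b)\ll q^N$. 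For bounded $M$ the argument goes through.

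The genuine gap is flatness at infinity when $M$ is unbounded. Your $z$ reproduces the increments of $x$ at every scale $\ge q^{N}$ everywhere in $M$, including arbitrarily far from the basepoint, so $z$ need not be flat at infinity and hence need not lie in $\lip^u_0(M)$. The proposed remedy --- ``symmetric averaging over coarse scales, damping the increments at scales larger than $R$'' --- cannot repair this: subtracting coarse-scale terms $x\circ\pi_m$ with $q^{m}>R$ only kills increments over pairs at large \emph{mutual distance}, whereas flatness at infinity requires the Lipschitz ratio to be small for every pair with a point outside $B[0,r]$, including pairs at mutual distance of order $q^N$ lying far from $0$ (take $M$ containing clusters of diameter $q^N$ arbitrarily far out on which $x$ has slope $1$; your $z$ inherits that slope there). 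What is actually needed is a radial truncation, $z=0$ off $B[0,R]$, as in the paper; this in turn forces a boundary case for $a\notin B[0,R]$, $b\in B[0,R]$, which closes only because of the ultrametric identity $d(a,b)=d(a,0)\ge R$ combined with $|x(b)-z(b)|\le q^{N}/(K(1-q))\le\varepsilon R$ and the flatness of the $z_i$ at infinity --- the paper's Case~II. That case is absent from your proposal and is not ``handled identically'' to the others.
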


\begin{proof}
  Let $\varepsilon > 0$, let $f_1,f_2,f_3 \in B_{\lip_0^u(M)}$,
  and let $F \in B_{\Lip_0(M)}$.
  It is enough to find $h \in \lip_0^u(M)$ such that for all $i \in \{1,2,3\}$,
  \begin{equation*}
    \|f_i+F - h \| \le 1 + 2\varepsilon.
  \end{equation*}
  Let $q \in (0,1)$.
  Since the $f_i$'s are uniformly locally flat and flat at infinity,
  there exists $\delta=q^N$ for some $N\in \mathbb{N}$
  such that for all $i\in \{1,2,3\}$, we have
  $|f_i(x) - f_i(y)| < \varepsilon d(x,y)$
  whenever $d(x,y) < \delta$, $d(x,0) > R$, or $d(y,0) > R$.
  We can and will assume that
  $\delta \leq  \varepsilon\leq R$.

  By Lemma~\ref{lem: partitioning}, there exists
  a sequence of partitions $(C_n)_{n \in \mathbb{Z}}$ of $B[0,R]$.
  Define $h\in \lip_0^u(M)$ by setting for all $x\in C_{2N}$, 
  \begin{equation*}
    h|_{B(x,q^{2N})}
    =
    \frac{
      \sup_{z\in B(x,q^{2N})}F(z)
      +
      \inf_{z\in B(x,q^{2N})}F(z)
    }{2},
  \end{equation*}
  and $h(x)=0$ if $x \in M\setminus B[0,R]$.
  Note that for all $z\in B[0,R]$,
  \[
  |F(z) - h(z)|\leq q^{2N}/2=\delta^2/2.
  \]
  Let $i\in \{1,2,3\}$ and let $x,y\in M$. To estimate the norm $\|f_i+F - h \|$, we have to look at a few cases.
  
  \noindent
  \textbf{Case I.}
  If either $x,y \in B[0,R]$ with $d(x,y) < q^{2N}$, or
  $x,y\in M\setminus B[0,R]$,
  then $h(x) = h(y)$ and $|f_i(x) - f_i(y)| \le \varepsilon d(x,y)$,
  and hence
  \begin{equation*}
    |f_i(x) + F(x) - h(x) - f_i(y) - F(y) + h(y)|
    \le (1+\varepsilon)d(x,y).
  \end{equation*}

  \noindent
  \textbf{Case II.}
  If $x\in M\setminus B[0,R]$ and
  $y\in B[0,R]$, then $d(x,0)>d(y,0)$,
  which implies $d(x,y)=d(x,0)\geq R$.
  Therefore,
  \begin{align*}
    &|f_i(x) + F(x) - h(x)-f_i(y)-F(y)+h(y)|\\
    &\qquad
    \leq |F(x)|+ |f_i(x)-f_i(y)|+|F(y)-h(y)| \\
    &\qquad
    \leq d(x,0)+\varepsilon d(x,y)+\delta^2 \\
    &\qquad
    \leq d(x,y)+\varepsilon d(x,y)+\varepsilon R
    \leq (1+2\varepsilon)d(x,y).
  \end{align*}

  \noindent
  \textbf{Case III.}
  If $x,y \in B[0,R]$ with $q^{2N}\leq d(x,y)< q^N = \delta$, then
  $|f_i(x) - f_i(y)| \le \varepsilon d(x,y)$
  and
  $|F(x) - h(x)| < q^{2N}/2$ (and similarly for $y$).
  Hence,
  \begin{align*}
    &|f_i(x) + F(x) - h(x) - f_i(y) - F(y) + h(y)| \\
    &\qquad
    \leq \varepsilon d(x,y)+ q^{2N}/2+q^{2N}/2
    \le (1+\varepsilon)d(x,y).
  \end{align*}

  \noindent
  \textbf{Case IV.}
  If $x,y \in B[0,R]$ with $d(x,y)\geq \delta$, then
  again $|F(x) - h(x)| < q^{2N}/2$ (and similarly for $y$),
  \begin{align*}
    &|f_i(x)+F(x)  - h(x) - f_i(y)- F(y) + h(y)| \\
    &\qquad
    \leq d(x,y)+ q^{2N}/2+q^{2N}/2
    = d(x,y)+\delta^2\\
    &\qquad
    \leq d(x,y)+\delta d(x,y)
    \leq (1+\varepsilon) d(x,y).
  \end{align*}
  Thus, we have shown $\|f_i + F - h\| \le 1 + 2\varepsilon$.
\end{proof}

We finish the section by showing that for
a spherically complete $M$,
the predual $Y$ of $\mathcal{F}(M)$ we constructed
is not $M$-embedded if $M$ is not proper.

\begin{lem}\label{lem: not proper not M-embedded}
    Let $M$ be a separable spherically complete ultrametric space. If $M$ is not proper, then there exist $\delta,R>0$, $N\in \mathbb{N}$, and a subsequence $(s_{n_i})$ of $S$ satisfying for all $i\in \mathbb{N}$, $n_i>N$, $\delta \leq d(s_{n_i},s_{N})\leq R$, and $r_{n_i-1}(s_{n_i})=s_N$.
\end{lem}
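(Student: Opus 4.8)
The plan is to read the retractions $r_n$ as a rooted tree on $S$ and to locate a single vertex having infinitely many edges of bounded-below length beneath it. First I would use that $M$ is not proper to fix the scales: some closed ball is not totally bounded, and enlarging it to an open ball $B^{*}=B(z,R)$ (using Lemma~\ref{lem: prop_ultram}) I can choose $R>0$ and $\delta\in(0,R)$ so that $B^{*}$ contains infinitely many pairwise disjoint open balls of radius $\delta$; by density of $S$ each of these meets $S$, so $S\cap B^{*}$ is infinite. Calling $r_{m-1}(s_m)$ the \emph{parent} of $s_m$ turns $S$ into a tree rooted at $s_1=0$. Since any point outside $B^{*}$ is at distance $\ge R$ from any point of $B^{*}$ (Lemma~\ref{lem: prop_ultram}\ref{item:2}) while the first point $s_{N_0}$ of $S$ landing in $B^{*}$ is at distance $<R$, the parent of every point of $S\cap B^{*}$ stays in $B^{*}$. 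Thus $S\cap B^{*}$ carries an infinite subtree $\mathcal T^{*}$ rooted at $s_{N_0}$, all of whose edges have length $<R$, and the first $S$-point of each of the infinitely many occupied $\delta$-balls has parent-edge of length $\ge\delta$; so $\mathcal T^{*}$ has infinitely many \emph{long} edges (length in $[\delta,R)$).

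It therefore suffices to find a vertex of $\mathcal T^{*}$ with infinitely many long children: listing them in increasing index gives the desired $(s_{n_i})$ with $n_i>N$ and $\delta\le d(s_{n_i},s_N)<R$, and being a child of $s_N$ is exactly $r_{n_i-1}(s_{n_i})=s_N$. The core of the argument is a rigidity forced by spherical completeness. On one hand, any infinite descending ray $w_0,w_1,\dots$ of $\mathcal T^{*}$ (each $w_{j+1}$ a child of $w_j$) satisfies $r_{m_j}(w_{j+1})=w_j$, where $w_j=s_{m_j}$, by Lemma~\ref{lem: r_n is a retract}; and using the least-index tie-break together with Lemma~\ref{lem: prop_ultram}\ref{item:1} one checks it is pseudo-Cauchy, so by Lemma~\ref{lem: s_n distances decreasing} it is Cauchy and its consecutive distances tend to $0$. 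On the other hand there is a \emph{concentration} estimate: if $s_n$ has a child $c$ with $d(c,s_n)\ge\delta$ but $s_n$'s own parent-edge is shorter than $\delta$, then that parent lies at the same distance $d(c,s_n)$ from $c$ (Lemma~\ref{lem: prop_ultram}\ref{item:1}) yet has smaller index, so $c$ would attach to it instead, a contradiction. Propagating this up the tree shows that every long edge, together with the entire root-path of its parent, uses only edges of length $\ge\delta$; equivalently all long edges lie in the subtree $\mathcal R_\delta$ spanned from the root by edges of length $\ge\delta$.

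Finally I would run König's lemma on $\mathcal R_\delta$. It contains the infinitely many long edges, hence is an infinite rooted tree, so it either has a vertex of infinite degree—that is, a vertex with infinitely many long children, which is the required $s_N$—or an infinite descending ray with all edges of length $\ge\delta$. The latter contradicts the first observation, since every descending ray has consecutive distances tending to $0$. This dichotomy produces the vertex and finishes the proof. I expect the hard part to be exactly this last mechanism: isolating the concentration estimate and recognising that spherical completeness, via Lemma~\ref{lem: s_n distances decreasing}, rules out arbitrarily long descending rays, so that König's lemma is forced to return an infinite-degree vertex rather than a ray.
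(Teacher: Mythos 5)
Your argument is correct, but it is organised quite differently from the paper's. The paper starts from a bounded $\delta$-separated sequence $(x_k)$ witnessing non-properness, applies Lemma~\ref{lem: subsequence comverge} (a diagonal extraction that itself rests on Lemma~\ref{lem: s_n distances decreasing}) to make the retractions $r_k(x_k)$ coherent and convergent, shows that this sequence must be eventually constant equal to some $y=s_N$ (otherwise $(x_k)$ would converge, contradicting separation), and then reads off each $s_{n_i}$ as the first point of $S$ that pulls the retraction of $x_{k_i}$ away from $y$; the bounds $\delta\le d(s_{n_i},s_N)\le R$ fall out of $d(s_{n_i},y)=d(x_{k_i},y)$. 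You instead make the tree structure of the retractions explicit and argue combinatorially: infinitely many long edges inside a non-totally-bounded ball, a concentration estimate pushing them all into the $\delta$-subtree, and K\"onig's lemma forced to return an infinite-degree vertex because Lemma~\ref{lem: s_n distances decreasing} --- the only place spherical completeness enters, in both proofs --- kills infinite descending rays. Your route gives a clearer picture of \emph{why} the separated points must cluster under a common parent, at the cost of extra bookkeeping (checking that descending rays are pseudo-Cauchy via the least-index tie-break, and propagating edge lengths upward); the paper's route is shorter because Lemma~\ref{lem: subsequence comverge} already packages the extraction. One point to tighten: run K\"onig's lemma on the long-edge subtree \emph{inside} $\mathcal{T}^*$, rooted at $s_{N_0}$, rather than on the global $\mathcal{R}_\delta$. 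On the global tree the infinite-degree vertex could a priori have its infinitely many $\delta$-separated children at unbounded distances, losing the upper bound $R$; inside $B^*$ every edge automatically has length $<R$, so the restriction costs nothing and matches the goal you state ("a vertex of $\mathcal{T}^*$ with infinitely many long children").
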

\begin{proof}
    Since $M$ is not proper, there exist $\delta>0$ and a bounded sequence $(x_k)$ in $M$ satisfying $\delta\leq d(x_k,x_l)$ for all $k,l\in \mathbb{N}$, $k\neq l$.  By Lemma~\ref{lem: subsequence comverge}, 
    we may further assume, by going to a subsequence if necessary, that $(r_k(x_k))_{k\in \mathbb{N}}$ is a converging sequence satisfying $r_k(x_l)=r_k(x_k)$ for all $l\geq k$.

    We first show that $(r_k(x_k))_{k\in \mathbb{N}}$ is eventually constant. 
    Assume for contra\-diction that this is not the case.
    Then for every $k\in \mathbb{N}$,
    there exists $l>k$ such that $r_{l}(x_{l})\neq r_k(x_k)=r_k(x_{l})$.
    We get
    \[
    d(r_{l}(x_{l}),x_{l})<d(r_k(x_{l}),x_{l})=d(r_k(x_k),x_{l}),
    \]
    and therefore, $d(r_k(x_k),x_{l})=d(r_k(x_k),r_{l}(x_{l}))$.
    Since 
    \[
    \lim_{k,l\to \infty} d(r_k(x_k),r_{l}(x_{l}))=0,
    \]
    we get $\lim_l r_l(x_l) = \lim_l x_l$,
    which is a contradiction because $(x_l)$ does not converge.

    Since $(r_k(x_k))$ is eventually constant, there exists $K\in \mathbb{N}$ so that $r_k(x_k)=r_K(x_K)$ for all $k\geq K$. Denote $y=r_K(x_K)\in S_K$ and $R=\sup_{k\in \mathbb{N}}d(x_k,y)$. Note that $R<\infty$ because $(x_k)$ is bounded. There exists at most one $k> K$ satisfying $d(x_k, y)<\delta$ because for any $k,l\in \mathbb{N}$ with $k\neq l$, we have
    \[
    \delta \leq d(x_k,x_l)\leq \max\{d(x_k,y),d(x_l,y)\}.
    \]

    Fix $k_1>K$ so that $d(x_{k_1},y)\geq\delta$. 
    Since $S$ is dense in $M$, there exists 
    \[
    n_1=\min\{n>K\colon r_n(x_{k_1})\neq y\}=\min\{n>K\colon r_n(x_{k_1})=s_n\}.
    \]
    Since $r_{k_1}(x_{k_1})=y$, we have $n_1>k_1$. From
    $d(s_{n_1},x_{k_1})<d(x_{k_1},y)$ we get
    $d(s_{n_1},y)=d(x_{k_1},y)$.
    Therefore,
    \[
    \delta \leq d(s_{n_1},y)\leq R.
    \]
    
    Fix $k_2>n_1$ so that $d(x_{k_2},y)\geq \delta$ and let
    \[
    n_2=\min\{n>K\colon r_n(x_{k_2})\neq y\}=\min\{n>K\colon r_n(x_{k_2})=s_n\}.
    \]
    Similarly to before, we get $n_2>k_2$ and $d(s_{n_2},y)=d(x_{k_2},y)$, which implies
    \[
    \delta \leq d(s_{n_2},y)\leq R.
    \]
    Continuing iteratively, we get $K<k_1<n_1<k_2<\dotsb$ such that for all $i\in \mathbb{N}$, $r_{n_i-1}(s_{n_i})=y$ and 
    $\delta \leq d(s_{n_i},y)\leq R.$ This completes the proof.
\end{proof}

To check that the predual $Y$ of $\mathcal{F}(M)$ is not an $M$-ideal
in $\Lip_0(M)$ for the discrete metric space $M=\mathbb{N}\cup \{0\}$,
it suffices to define $f\in S_Y$ by
\[
f(n)=\begin{cases}
    1, &\text{if $n=1$};\\
    0, &\text{otherwise,}
\end{cases}
\]
define $F\in S_{\Lip_0(M)}$ by setting $F(n)=1$ for all $n\in \mathbb{N}$,
and calculate $\|\pm f + F - g\|$ for $g \in Y$ to see that $Y$ fails the 3-ball property.
The proof for the following proposition is a generalisation 
of this idea.

% Actual calc: ||f + F - g|| >= 2 - g(1) (on m_10)
% and ||-f + F - g|| >= 1 + g(1) (on m_n,1 and n->infty)
% Add the inequalities to get \varepsilon\geq 1/2
\begin{prop}\label{prop: proper and M-embedded equivalence}
    Let $M$ be a separable spherically complete ultrametric space. The predual $Y$ of $\mathcal{F}(M)$ is $M$-embedded if and only if $M$ is proper.
\end{prop}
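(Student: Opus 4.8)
The plan is to prove both implications, with essentially all the work going into the converse. For the easy direction, suppose $M$ is proper. Then Lemma~\ref{lem: Y is lip_0 M if M is proper} gives $Y = \lip_0^u(M)$, and as recalled in the introduction (Dalet~\cite{MR3376824}, extending Kalton~\cite{MR2068975}) this space is, for every $\varepsilon>0$, $(1+\varepsilon)$-isometric to a subspace of $c_0$ and is therefore $M$-embedded; hence $Y$ is $M$-embedded. So the content of the proposition is the statement that if $M$ is \emph{not} proper, then $Y$ is not $M$-embedded.

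For the converse I would pass to the dual formulation of $M$-embeddedness. Since $Y^{**} = \Lip_0(M)$ and $Y^* = \mathcal{F}(M)$, the space $Y$ is $M$-embedded if and only if the canonical projection $\pi\colon \Lip_0(M)^* \to \mathcal{F}(M)$ (that is, $\pi\Phi = \Phi|_Y$, with $\ker\pi = Y^\perp$ and $\pi|_{\mathcal{F}(M)} = \mathrm{id}$) is an $L$-projection, i.e. $\|\Phi\| = \|\pi\Phi\| + \|\Phi - \pi\Phi\|$ for all $\Phi$ (see \cite[Theorem~III.1.2]{MR1238713} and its proof). The strategy is to produce a single $\Phi$ violating this additivity. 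Apply Lemma~\ref{lem: not proper not M-embedded} to obtain $\delta,R>0$, $N$, and a subsequence $(s_{n_i})$ of $S$ with $n_i>N$, $\rho_i := d(s_{n_i},s_N)\in[\delta,R]$, and $r_{n_i-1}(s_{n_i})=s_N$. Using this last identity together with $r_M\circ r_m = r_{\min\{M,m\}}$ (Lemma~\ref{lem: r_n is a retract}), one checks that for $f\in Y$ and the threshold $N(\varepsilon)$ of Definition~\ref{defn: predual Y of F(M)} one has $r_{N(\varepsilon)}(s_{n_i})=r_{N(\varepsilon)}(s_N)$ for all large $i$, whence $|f(s_{n_i})-f(s_N)|\le \varepsilon\rho_i\le\varepsilon R$ eventually; thus $f(s_{n_i})\to f(s_N)$. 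Consequently the molecules $u_i := m_{s_{n_i},s_N}\in S_{\mathcal{F}(M)}$ satisfy $u_i(f)\to0$ for every $f\in Y$, so for a free ultrafilter $\mathcal{U}$ the element $U := w^*\textrm{-}\lim_{\mathcal{U}} u_i \in \Lip_0(M)^*$ lies in $Y^\perp$. Testing $U$ against $F = \sum_i \rho_i\,\mathbf{1}_{B(s_{n_i},\rho_i)}$, which belongs to $B_{\Lip_0(M)}$ because the balls $B(s_{n_i},\rho_i)$ are pairwise disjoint, clopen, and avoid $0$ (all from Lemma~\ref{lem: prop_ultram}), yields $U(F)=1$, so $\|U\|=1$.

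The heart of the argument, and the step I expect to be the main obstacle, is the norm estimate
\[
  \|u_1 - U\| \le 2 - \tfrac{\delta}{R} < 2 .
\]
Granting it, we finish: since $\pi(u_1-U)=u_1$ and $(\mathrm{id}-\pi)(u_1-U)=-U$, additivity of an $L$-projection would force $\|u_1-U\| = \|u_1\| + \|U\| = 2$, a contradiction; hence $\pi$ is not an $L$-projection and $Y$ is not $M$-embedded. To prove the estimate, fix $g\in B_{\Lip_0(M)}$ and set $a = \tfrac{g(s_{n_1})-g(s_N)}{\rho_1}$ and $b_i = \tfrac{g(s_{n_i})-g(s_N)}{\rho_i}$, so $a,b_i\in[-1,1]$ and $(u_1-U)(g) = a - \lim_{\mathcal{U}} b_i$. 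The key geometric input is that $d(s_{n_1},s_{n_i}) = \max\{\rho_1,\rho_i\}$, which follows from Lemma~\ref{lem: prop_ultram}\ref{item:1} when $\rho_1\neq\rho_i$ and from the bound $d(s_{n_1},s_{n_i})\ge\rho_i$ (a consequence of $r_{n_i-1}(s_{n_i})=s_N$ and $s_{n_1}\in S_{n_i-1}$) when $\rho_1=\rho_i$. Combined with $\|g\|\le1$ this gives $|a\rho_1 - b_i\rho_i|\le\max\{\rho_1,\rho_i\}$, and splitting into the cases $\rho_i\le\rho_1$ and $\rho_i>\rho_1$ while factoring out the smaller radius yields in each case $|a-b_i|\le 2 - \min\{\rho_1,\rho_i\}/\max\{\rho_1,\rho_i\} \le 2 - \delta/R$, using $\rho_1,\rho_i\in[\delta,R]$. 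Taking the $\mathcal{U}$-limit gives the displayed bound. The only delicate points beyond this computation are the bookkeeping that $U\in Y^\perp$ and the uniform control $\rho_i\in[\delta,R]$ on the denominators, which are precisely what Lemma~\ref{lem: not proper not M-embedded} is designed to provide.
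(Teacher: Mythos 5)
Your proof is correct, but it takes a genuinely different route from the paper's. Both arguments run on the same engine, Lemma~\ref{lem: not proper not M-embedded}, and on the same geometric configuration (the points $s_{n_i}$ at distance $\rho_i=d(s_{n_i},s_N)\in[\delta,R]$ from $s_N$ with $r_{n_i-1}(s_{n_i})=s_N$; note also that your function $F$ is essentially the paper's $F=\sup_i f_i$). The difference is where the contradiction is extracted. The paper stays primal: it exhibits $f_1,-f_1\in B_Y$ and $F\in B_{\Lip_0(M)}=B_{Y^{**}}$ for which the $3$-ball property fails with a uniform defect $\varepsilon\ge\delta/(3R)$, evaluating candidate functions $g\in Y$ at $s_{n_1}$, $s_{n_i}$, $s_N$. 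You instead pass to $Y^{***}=\Lip_0(M)^*$ and produce a single element $u_1-U$ (molecule minus a weak$^*$ ultrafilter limit of molecules lying in $Y^\perp$) on which the canonical $L$-decomposition fails quantitatively, $\|u_1-U\|\le 2-\delta/R<2=\|u_1\|+\|U\|$; your verification that $U\in Y^\perp$, that $\|U\|=1$ via the disjoint-balls function $F$, and the two-case estimate $|a-b_i|\le 2-\min\{\rho_1,\rho_i\}/\max\{\rho_1,\rho_i\}$ using $d(s_{n_1},s_{n_i})=\max\{\rho_1,\rho_i\}$ all check out. What your approach buys is a clean, single-witness violation of additivity with an explicit constant; what it costs is reliance on the standard but nontrivial fact from \cite[Chapter~III]{MR1238713} that if $Y$ is $M$-embedded then the $L$-projection on $Y^{***}$ with kernel $Y^\perp$ must be the \emph{canonical} projection $\Phi\mapsto\Phi|_Y$ (so that $\pi(u_1-U)=u_1$ has norm exactly $1$) — you cite this correctly, but it deserves to be flagged as the one external input, since an arbitrary $L$-projection with kernel $Y^\perp$ would not a priori fix $u_1$. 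The paper's $3$-ball argument avoids this by using only the characterisation it already quotes.
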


\begin{proof}
  If $M$ is proper, then $Y$ is $M$-embedded by \cite[Lemma~3.9]{MR3376824} and Lemma~\ref{lem: Y is lip_0 M if M is proper}.
  Assume that $M$ is not proper.
    By Lemma~\ref{lem: not proper not M-embedded}, there exist $\delta,R>0$, $N\in \mathbb{N}$, and a subsequence $(s_{n_i})$ of $S$ satisfying for all $i\in \mathbb{N}$, $n_i>N$, $\delta \leq d(s_{n_i},s_{N})\leq R$ and $r_{n_i-1}(s_{n_i})=s_N$.
    
    For every $i\in \mathbb{N}$, define
    \[
    f_i(x)=
    \begin{cases}
    d(s_{n_i},s_N), & \text{if $r_{n_i}(x)=s_{n_i}$;}\\
    0, & \text{otherwise.}
    \end{cases}
    \]
    We show that $f_i\in S_Y$. To see that $f_i\in \Lip_0(M)$ with $\|f_i\|\leq 1$, 
    we take $x,y\in M$ such that $f_i(x)\neq f_i(y)$ 
    (by symmetry, we may assume that $r_{n_i}(x)=s_{n_i}$ and $r_{n_i}(y)\neq s_{n_i}$), 
    and note that
    \[
    |f_i(x)-f_i(y)|=d(s_{n_i},s_N)=  d(s_{n_i}, S_{n_i-1})\leq d(r_{n_i}(x),r_{n_i}(y))\leq d(x,y).
    \]
    It is clear that $f_i\in Y$ because if $r_{n_i}(x)=r_{n_i}(y)$, then $f_i(x)=f_i(y)$. Finally, we have $\|f_i\|\geq 1$ because $f_i(s_{n_i})-f_i(s_N)=d(s_{n_i},s_N)$.

    Define $F\in \Lip_0(M)$ by $F(x)=\sup_{i\in \mathbb{N}} f_i(x)$ and note that $\|F\|\leq 1$ by \cite[Proposition~1.32]{MR3792558}. For all $i\in \mathbb{N}$, we have $F(s_{n_i})\geq d(s_{n_i},s_N)$ and $F(s_N)=0$, so $F(s_{n_i})=d(s_{n_i},s_N)$.

    Let $g\in Y$ and let $\varepsilon>0$. Assume that
    \[
    \|\pm f_1+F-g\|\leq 1+\varepsilon.
    \]
    We will show that $\varepsilon$ is bounded away from $0$
    and hence $Y$ does not satisfy the 3-ball property.
    
    Note that
    \begin{align*}
        (1+\varepsilon)d(s_{n_1},s_N)&\geq (f_1+F-g)(s_{n_1})-(f_1+F-g)(s_N)\\
        &=2d(s_{n_1},s_N)-g(s_{n_1})+g(s_N),
    \end{align*}
    which implies $g(s_{n_1})-g(s_N)\geq (1-\varepsilon)d(s_{n_1},s_N)$.
    Since $g\in Y$, there exists $K>N$ satisfying for all $x\in M$ with $r_K(x)=s_N$,
    \[
    |g(x)-g(s_N)|\leq \varepsilon d(x, s_N).
    \]
    Fix $i\in \mathbb{N}$ such that $n_i> K$. Then $|g(s_{n_i})-g(s_N)|\leq \varepsilon d(s_{n_i},s_N)$ because
    \[
    r_K(s_{n_i})=r_K(r_{n_i-1}(s_{n_i}))=r_K(s_N)=s_N.
    \]
    We have
    \begin{align*}
      (1+\varepsilon)d(s_{n_1},s_{n_i})
      &\geq
      (-f_1+F-g)(s_{n_i}) - (-f_1+F-g)(s_{n_1})\\
      &=
      F(s_{n_i}) + g(s_N) - g(s_{n_i}) + g(s_{n_1}) - g(s_N)\\
      &\ge
      d(s_{n_i},s_N) - \varepsilon d(s_{n_i},s_N) +
      (1-\varepsilon) d(s_{n_1}, s_N) \\
      &=
      (1-\varepsilon) \bigl(d(s_{n_1},s_{N}) + d(s_{n_i},s_N) \bigr)\\
      &\geq
      (1-\varepsilon) \bigl(
      \max\{d(s_{n_1},s_{N}), d(s_{n_i},s_N)\} + \delta
      \bigr)\\
      &
      \geq (1-\varepsilon)\bigl( d(s_{n_1},s_{n_i}) + \delta \bigr).
    \end{align*}
        Therefore,
        \[
        2\varepsilon d(s_{n_1},s_{n_i})\geq (1-\varepsilon)\delta\geq \delta - \varepsilon d(s_{n_1},s_{n_i}),
        \]
        so $3\varepsilon d(s_{n_1},s_{n_i})\geq \delta$.
        Since
        \[
        d(s_{n_1},s_{n_i})\leq \max\{d(s_{n_1},s_N), d(s_N,s_{n_i})\}\leq R,
        \]
        we get $\varepsilon\geq \frac{\delta}{3R}$. Therefore, $Y$ is not $M$-embedded.
\end{proof}

We end this section with a corollary that connects a few threads
to give a characterisation of when a separable spherically complete ultrametric space
is proper in terms of our predual $Y$.

\begin{cor}\label{cor: ultrametric proper equiv}
    Let $M$ be a separable spherically complete ultrametric space. The following are equivalent:
    \begin{enumerate}
        \item\label{item: cor_i} $M$ is proper;
        \item\label{item: cor_ii} $Y=\lip_0^u(M)$;
        \item\label{item: cor_iii} $Y$ is $(1+\varepsilon)$-isometric to a subspace of $c_0$ for any $\varepsilon>0$;
        \item\label{item: cor_iv} $Y$ is $M$-embedded.
    \end{enumerate}
\end{cor}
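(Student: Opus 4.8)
The plan is to treat this corollary as an assembly of the results already proved, arranged into a short cycle of implications. The backbone is Proposition~\ref{prop: proper and M-embedded equivalence}, which already delivers \ref{item: cor_i}$\Leftrightarrow$\ref{item: cor_iv} outright. It then only remains to fold \ref{item: cor_ii} and \ref{item: cor_iii} into this equivalence, and I would do so by proving the two chains \ref{item: cor_i}$\Rightarrow$\ref{item: cor_ii}$\Rightarrow$\ref{item: cor_iv} and \ref{item: cor_i}$\Rightarrow$\ref{item: cor_iii}$\Rightarrow$\ref{item: cor_iv}. Note first that a spherically complete space is complete, so the main theorem applies and $Y$ is a predual of $\mathcal{F}(M)$ throughout.

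First I would dispatch the implications leaving \ref{item: cor_i}. The implication \ref{item: cor_i}$\Rightarrow$\ref{item: cor_ii} is precisely Lemma~\ref{lem: Y is lip_0 M if M is proper}. For \ref{item: cor_i}$\Rightarrow$\ref{item: cor_iii}, I would combine that same lemma (which gives $Y = \lip_0^u(M)$ when $M$ is proper) with Dalet's theorem \cite{MR3376824} that for proper $M$ the space $\lip_0^u(M)$ is $(1+\varepsilon)$-isometric to a subspace of $c_0$ for every $\varepsilon>0$.

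The two return implications to \ref{item: cor_iv} carry what little content there is. For \ref{item: cor_ii}$\Rightarrow$\ref{item: cor_iv}, I would use that $Y^* = \mathcal{F}(M)$, so $Y^{**} = \mathcal{F}(M)^* = \Lip_0(M)$, with the canonical embedding $Y \hookrightarrow Y^{**}$ coinciding with the inclusion $Y \subset \Lip_0(M)$. Assuming \ref{item: cor_ii}, i.e.\ $Y = \lip_0^u(M)$, Proposition~\ref{prop: unif locally flat M-ideal in Lip_0(M)} says $\lip_0^u(M)$ is an $M$-ideal in $\Lip_0(M)$; read through the identification $Y^{**} = \Lip_0(M)$, this says exactly that $Y$ is an $M$-ideal in its bidual, i.e.\ $M$-embedded. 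For \ref{item: cor_iii}$\Rightarrow$\ref{item: cor_iv}, I would invoke the standard principle, already used at the start of this subsection, that a Banach space which is $(1+\varepsilon)$-isometric to a subspace of $c_0$ for every $\varepsilon>0$ is $M$-embedded.

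Finally I would close the loop: \ref{item: cor_ii}$\Rightarrow$\ref{item: cor_iv} and \ref{item: cor_iii}$\Rightarrow$\ref{item: cor_iv}, together with \ref{item: cor_iv}$\Rightarrow$\ref{item: cor_i} from Proposition~\ref{prop: proper and M-embedded equivalence} and the two implications out of \ref{item: cor_i} above, render all four conditions equivalent. Since the heavy lifting is all in the cited lemma and propositions, I do not expect a genuine obstacle; the only point needing care is the bidual bookkeeping in \ref{item: cor_ii}$\Rightarrow$\ref{item: cor_iv}, namely verifying that the abstract predual duality $Y^{**} = \Lip_0(M)$ is implemented by the concrete inclusion $Y \subset \Lip_0(M)$, so that ``$M$-ideal in $\Lip_0(M)$'' and ``$M$-embedded'' genuinely coincide for $Y$.
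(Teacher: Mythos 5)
Your proposal is correct and assembles the corollary from exactly the same ingredients as the paper: Lemma~\ref{lem: Y is lip_0 M if M is proper}, Dalet's embedding into $c_0$, Proposition~\ref{prop: unif locally flat M-ideal in Lip_0(M)}, and Proposition~\ref{prop: proper and M-embedded equivalence}. The only cosmetic difference is that you route \ref{item: cor_ii}$\Rightarrow$\ref{item: cor_iv} explicitly via the identification $Y^{**}=\Lip_0(M)$, whereas the paper phrases the step \ref{item: cor_ii}$\Rightarrow$\ref{item: cor_i} with an appeal to the uniqueness of $M$-embedded preduals; both rest on the same two propositions, and your extra care with the bidual bookkeeping is exactly the point that needs checking.
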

\begin{proof}
    \ref{item: cor_i}$\Rightarrow$\ref{item: cor_ii} holds 
    by Lemma~\ref{lem: Y is lip_0 M if M is proper}. 
    $M$-embedded preduals are unique by \cite[Proposition~IV.1.9]{MR1238713}, 
    so \ref{item: cor_ii}$\Rightarrow$\ref{item: cor_i} 
    follows from Propositions~\ref{prop: unif locally flat M-ideal in Lip_0(M)} 
    and~\ref{prop: proper and M-embedded equivalence}. 
    \ref{item: cor_i}$\Rightarrow$\ref{item: cor_iii} now follows from \cite[Lemma~3.9]{MR3376824}, 
    while~\ref{item: cor_iii}$\Rightarrow$\ref{item: cor_iv} 
    holds even if we replace $Y$ with a general Banach space by the 3-ball property.
    Finally, \ref{item: cor_i}$\Leftrightarrow$\ref{item: cor_iv} 
    holds by Proposition~\ref{prop: proper and M-embedded equivalence}.
\end{proof}

\section{Strongly extreme points in spaces of Lipschitz functions}
\label{sec:strongly-extr-points}
Let $X$ be a Banach space.
Recall that $x \in S_X$ is a \emph{strongly extreme point}
if $\lim_n \|x \pm y_n\| = 1$ for a sequence $(y_n)$ in $X$
implies $\lim_n \|y_n\| = 0$.
Another way of saying this is that the norm is
midpoint locally uniformly rotund at $x$.
At the other end of the scale we have the property that
$X$ is \emph{locally almost square} (LASQ); that is,
if for every $x \in S_X$ and $\varepsilon > 0$, there exists
$y \in S_Y$ such that $\|x \pm y\| \le 1 + \varepsilon$.

Gar{\'i}ca-Lirola and Rueda Zoca proved in
\cite[Proposition~4.1]{MR3619230} that $\Lip_0(M)$ is not LASQ
if $M$ is a bounded uniformly discrete metric space.
In fact, what they prove is that if $f(x) = d(x,0)$
and $\|f \pm g_n\| \to 1$, then $\|g_n\| \to 0$.
Hence, $f$ is strongly extreme.
In particular, this shows that for the ultrametric space
$M = \mathbb{N} \cup \{0\}$ with discrete metric, $\Lip_0(M)$ is not LASQ.
In the same paper \cite[Remark~3.5]{MR3619230} it was remarked
that if $M$ is an ultrametric space that
has a limit point, then $\lip_0^u(M)$ ($S(M)$ in their notation)
is not only locally almost square,
but even unconditionally almost square.

It is not too difficult to show that in general
if $M$ is an ultrametric space, then $f(x) = d(x,0)$ is
not necessarily a strongly extreme point of $B_{\Lip_0(M)}$.
The aim of this section is to prove that we can
always find a strongly extreme point in the unit ball of $\Lip_0(M)$.

\begin{thm}\label{thm:strongly_extreme_and_LASQ}
  Let $M$ be an ultrametric space.
  Then $B_{\Lip_0(M)}$ contains a strongly extreme point
  and in particular, $\Lip_0(M)$ is not LASQ.
\end{thm}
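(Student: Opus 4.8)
The plan is to construct an explicit candidate for a strongly extreme point and verify the midpoint local uniform rotundity condition directly. The natural candidate is a function $f$ built from the metric structure; given the discussion in the excerpt, a good choice is to take a point $p \in M$ with $d(p,0)$ maximal among "large" scales, or more robustly, to mimic the Gar\'ica-Lirola--Rueda Zoca argument. Since $f(x)=d(x,0)$ need not be strongly extreme for general ultrametric $M$, I would instead exploit the ultrametric partitioning from Lemma~\ref{lem: partitioning} to produce a function that is "rigid" in a neighborhood of some point. Concretely, first I would fix a scale $q\in(0,1)$ and use the partition structure to find a function $f\in S_{\Lip_0(M)}$ which, near a suitably chosen reference point, behaves like the distance function and whose Lipschitz constant is attained in an essentially unique direction at every scale.

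The key steps, in order, are as follows. First I would define the candidate $f$ and verify $\|f\|=1$. Second, I would suppose $(g_n)$ is a sequence in $\Lip_0(M)$ with $\lim_n \|f\pm g_n\|=1$ and aim to show $\lim_n\|g_n\|=0$. The core computation is to pick, for each $n$, points $x_n\neq y_n$ with $\frac{|g_n(x_n)-g_n(y_n)|}{d(x_n,y_n)}$ close to $\|g_n\|$, and then use the ultrametric inequality to show that the constraint $\|f\pm g_n\|\le 1+\varepsilon_n$ forces $f$ to "use up" essentially all of the available Lipschitz budget along the same pair, so that $g_n$ must be nearly flat there. Here the strong triangle inequality is decisive: in an ultrametric space, distances take a restricted set of values and balls are nested, so the interaction $\frac{|f(x)\pm g_n(x)-f(y)\mp g_n(y)|}{d(x,y)}\le 1+\varepsilon_n$ for both signs pins down $|g_n(x_n)-g_n(y_n)|$ in terms of how close $\frac{|f(x_n)-f(y_n)|}{d(x_n,y_n)}$ is to $1$.

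The main obstacle I expect is handling the \emph{location} of the near-maximizing pairs $(x_n,y_n)$ for $g_n$: a priori these could drift off to regions where $f$ has small local slope, decoupling the two functions and breaking the forcing argument. To control this I would leverage the earlier machinery, specifically the retractions $r_n$ of Lemma~\ref{lem: r_n is a retract} and the subsequence-convergence technique of Lemma~\ref{lem: subsequence comverge}, to reduce to pairs living at a fixed scale near a fixed point, where $f$ is designed to have slope exactly $1$. The averaging/midpoint trick --- adding the two inequalities for $+g_n$ and $-g_n$ and using $|a+b|+|a-b|\ge 2\max\{|a|,|b|\}$ --- then yields the quantitative bound $\|g_n\|\le C\varepsilon_n \to 0$. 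Once a strongly extreme point is produced, the fact that $\Lip_0(M)$ is not LASQ is immediate, since a LASQ space cannot contain any strongly extreme point: for $x$ strongly extreme one cannot find $y\in S_X$ with $\|x\pm y\|$ arbitrarily close to $1$.
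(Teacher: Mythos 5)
Your overall architecture --- build $f \in S_{\Lip_0(M)}$ whose slope equals $1$ along enough pairs, then use the two-sided bound $\|f\pm g\|\le 1+\varepsilon$ together with $|a+b|+|a-b|\ge 2\max\{|a|,|b|\}$ to force $|g(x)-g(y)|\le C\varepsilon\, d(x,y)$ --- is the right one, and your closing remark that a strongly extreme point rules out LASQ is correct. But there is a genuine gap: the candidate $f$ is never constructed, and that construction is the entire content of the theorem. A function that is merely ``rigid in a neighborhood of some reference point'' cannot work, because $\|g_n\|$ is a supremum over \emph{all} pairs in $M$, and in an ultrametric space the distance function to any fixed point is constant on each sphere around that point, so it exerts no control on $g_n$ along pairs lying in a common sphere. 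What the paper actually does is globalize the control: using the partitions $(C_n)_{n\in\mathbb{Z}}$ of Lemma~\ref{lem: partitioning} it attaches to every $x\in M$ the chain $\mathfrak{s}(x)=(x_0,x_1,\dots)$ of successive nearest points in the $C_n$, and sets $f(x)=\sum_k(-1)^k d(x_k,x)$. The alternation is exactly what makes $|f(x_{k+1})-f(x_k)|=d(x_k,x_{k+1})$ on \emph{every} link of \emph{every} chain, so that telescoping $g$ along $\mathfrak{s}(x)$ and $\mathfrak{s}(y)$ (which share a common prefix, by Lemmas~\ref{lem: strongly extreme, first K} and~\ref{lem: strongly extreme, second K}) yields $|g(x)-g(y)|\le \frac{2\varepsilon}{q^2(1-q)}\,d(x,y)$ via the geometric-series bound of Lemma~\ref{lem: x geometric}. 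Without an explicit $f$ of this kind, your ``forcing'' step has nothing to act on.

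Separately, your proposed fix for the location problem --- passing to near-maximizing pairs $(x_n,y_n)$ for $g_n$ and taming them with the retractions $r_n$ of Lemma~\ref{lem: r_n is a retract} and the subsequence argument of Lemma~\ref{lem: subsequence comverge} --- is not available here: those tools are built on a countable dense sequence $S$ and, in the case of Lemma~\ref{lem: subsequence comverge}, on spherical completeness, whereas Theorem~\ref{thm:strongly_extreme_and_LASQ} assumes only that $M$ is an ultrametric space (neither separable nor complete). The paper's proof sidesteps the issue entirely by never selecting maximizing pairs: the bound on $|g(x)-g(y)|$ is established for arbitrary $x,y$ directly from the chain decomposition.
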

\noindent
We have broken the proof of the theorem
into a series of lemmas.

Throughout the rest of this section let $M$ be an ultrametric space
and let $q\in (0,1)$.
Fix partitions $(C_n)_{n\in\mathbb{Z}}$ of $M$
satisfying the conditions of Lemma~\ref{lem: partitioning}.
From Lemma~\ref{lem: partitioning} \ref{item:c} and \ref{item:d}
we have that for all $n\in \mathbb{Z}$ and $x\in M$,
there exists a uniquely defined element $y\in C_n$
satisfying $d(x,y)=d(x,C_n)< q^n$.
Therefore, for all $n\in \mathbb{Z}$,
we can define a function $\phi_n\colon M\to C_n$ by
\begin{equation*}
    d(x, \phi_n(x))=d(x,C_n)< q^n\qquad \text{for all $x\in M$}.
\end{equation*}
We list and prove some properties of these functions.

\begin{lem}\label{lem: ultrametric phi_n properties}
    Let $x,y\in M$ and let $n\in \mathbb{Z}$.
    \begin{enumerate}
    \item\label{item:phi_n-P1}
      $\phi_n\circ \phi_{n+1}=\phi_n$.
    \item\label{item:phi_n-P2}
      $\phi_n(x)=\phi_n(y)$ if and only if
      \begin{equation*}
        d(x,y) \le \max\{d(\phi_n(x),x), d(\phi_n(y),y)\}.
      \end{equation*}
    \item\label{item:phi_n-P3}
      If either $d(\phi_n(x),x)\leq d(x,y)$ or $d(\phi_n(y),y)\le d(x,y)$,
      then
      \[
        \max\{d(\phi_n(x),x),d(\phi_n(y),y)\} \le d(x,y).
      \]
    \item\label{item:phi_n-P4}
      If $\phi_n(x)\neq\phi_n(y)$, then 
      \[
        d(\phi_n(x),y)= d(\phi_n(y),x)=d(x,y)\geq q^{n}.
      \]
    \item\label{item:phi_n-P5}
      We have $d(\phi_n(x),x) = d(\phi_n(y),y)$ whenever
      \begin{equation*}
        d(x,y) < \max\{d(\phi_n(x),x),d(\phi_n(y),y)\}.
      \end{equation*}
    \end{enumerate}
\end{lem}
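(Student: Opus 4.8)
The plan is to base every part on one structural observation about the point map $\phi_n$. Since distinct points of $C_n$ are at distance at least $q^n$ by Lemma~\ref{lem: partitioning}\ref{item:c}, the open balls $B(c,q^n)$ with $c \in C_n$ are pairwise disjoint; consequently $\phi_n(z)$ is the \emph{only} point of $C_n$ contained in $B(z,q^n)$. I would first record this as the working observation: if $c \in C_n$ and $d(z,c) < q^n$, then $c = \phi_n(z)$ (indeed, if $c,c' \in C_n$ both lie in $B(z,q^n)$, then $d(c,c') \le \max\{d(c,z),d(z,c')\} < q^n$ forces $c = c'$). Combined with the isosceles property Lemma~\ref{lem: prop_ultram}\ref{item:1}, each of the five claims reduces to a short computation.

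For \ref{item:phi_n-P1}, I would set $c = \phi_{n+1}(x)$ and estimate $d(c,\phi_n(x)) \le \max\{d(c,x),d(x,\phi_n(x))\} < \max\{q^{n+1},q^n\} = q^n$; thus $\phi_n(x) \in C_n \cap B(c,q^n)$, and the working observation gives $\phi_n(c) = \phi_n(x)$, which is exactly $\phi_n\circ\phi_{n+1} = \phi_n$. For \ref{item:phi_n-P2}, the forward direction is the strong triangle inequality applied to the triple $x,\phi_n(x)=\phi_n(y),y$; for the converse, the hypothesis gives $d(x,y) < q^n$ (both radii lie below $q^n$), so $d(\phi_n(x),\phi_n(y)) \le \max\{d(\phi_n(x),x),d(x,y),d(y,\phi_n(y))\} < q^n$, and the spacing of $C_n$ forces $\phi_n(x)=\phi_n(y)$.

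The heart of the argument, and the step I expect to be the main obstacle, is \ref{item:phi_n-P4} and \ref{item:phi_n-P5}, where one must correctly identify which triangle is isosceles and read off the right leg. For \ref{item:phi_n-P4}, distinctness of $\phi_n(x),\phi_n(y) \in C_n$ gives $d(\phi_n(x),\phi_n(y)) \ge q^n$, while both radii are $< q^n$; feeding this into $d(\phi_n(x),\phi_n(y)) \le \max\{d(\phi_n(x),x),d(x,y),d(y,\phi_n(y))\}$ forces $d(x,y) \ge d(\phi_n(x),\phi_n(y)) \ge q^n$. Then $d(\phi_n(x),x) < q^n \le d(x,y)$ are unequal, so Lemma~\ref{lem: prop_ultram}\ref{item:1} yields $d(\phi_n(x),y) = d(x,y)$, and symmetrically $d(\phi_n(y),x) = d(x,y)$. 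For \ref{item:phi_n-P5}, I would assume without loss of generality $a := d(\phi_n(x),x) \ge d(\phi_n(y),y)$, so the hypothesis reads $d(x,y) < a$; since $d(x,y)$ and $a$ are unequal, Lemma~\ref{lem: prop_ultram}\ref{item:1} gives $d(\phi_n(x),y) = \max\{a,d(x,y)\} = a < q^n$, whence $\phi_n(x) \in C_n \cap B(y,q^n)$ and the working observation forces $\phi_n(x) = \phi_n(y)$; therefore $d(\phi_n(y),y) = d(\phi_n(x),y) = a = d(\phi_n(x),x)$.

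Finally, \ref{item:phi_n-P3} follows from \ref{item:phi_n-P5}. If, say, $d(\phi_n(x),x) \le d(x,y)$ yet the maximum of the two radii still exceeded $d(x,y)$, then \ref{item:phi_n-P5} would force $d(\phi_n(x),x) = d(\phi_n(y),y)$, so both radii would equal that maximum and hence both exceed $d(x,y)$, contradicting $d(\phi_n(x),x) \le d(x,y)$; the symmetric argument handles the case $d(\phi_n(y),y) \le d(x,y)$. Thus $\max\{d(\phi_n(x),x),d(\phi_n(y),y)\} \le d(x,y)$, as claimed.
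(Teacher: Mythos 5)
Your proof is correct and follows essentially the same route as the paper's: the same three ingredients (the $q^n$-spacing of $C_n$, which makes $\phi_n(z)$ the unique point of $C_n$ within $q^n$ of $z$; the strong triangle inequality; and the isosceles property of Lemma~\ref{lem: prop_ultram}~\ref{item:1}) drive each part in the same way. The only structural difference is that you obtain \ref{item:phi_n-P3} by contradiction from \ref{item:phi_n-P5}, whereas the paper proves it directly from the minimality $d(\phi_n(y),y)=d(y,C_n)\le d(\phi_n(x),y)$; both arguments are equally short and valid.
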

\begin{proof}
    \ref{item:phi_n-P1}. For all $x\in M$,
    \[
      d(\phi_{n+1}(x), \phi_n(x))
      \leq \max\{d(\phi_{n+1}(x),x), d(\phi_n(x),x)\}< q^n,
    \]
    and therefore, $\phi_n(\phi_{n+1}(x))=\phi_n(x)$.
    
    \ref{item:phi_n-P2}. If $\phi_n(x) = \phi_n(y)$, then
    \begin{equation*}
      d(x,y) \le
      \max\{d(\phi_n(x),x),d(\phi_n(x),y)\}
      =
      \max\{d(\phi_n(x),x),d(\phi_n(y),y)\}.
    \end{equation*}
     For the converse, we may assume $d(\phi_n(x),x) \ge d(x,y)$. 
     Then
    \begin{equation*}
      d(\phi_n(x),y) \le \max\{d(\phi_n(x),x),d(x,y)\}
      = d(\phi_n(x),x) < q^{n},
    \end{equation*}
    so $\phi_n(y) = \phi_n(x)$.

    \ref{item:phi_n-P3}. Without loss of generality, we may assume
    that $d(\phi_n(x),x) \le d(x,y)$ since everything
    is symmetric in $x$ and $y$.
    We have $\phi_n(x),\phi_n(y)\in C_n$, so
    \begin{equation*}
      d(\phi_n(y),y) \le
      d(\phi_n(x),y) \le \max\{d(\phi_n(x),x),d(x,y)\} = d(x,y).
    \end{equation*}
    
    \ref{item:phi_n-P4}.
    If $\phi_n(x)\neq \phi_n(y)$, then by definition of $\phi_n$,
    we have
    $d(\phi_n(x),y)\geq q^n$ and $d(\phi_n(y),x)\geq q^n$.
    By \ref{item:phi_n-P2}, we have
    $d(\phi_n(x),x)<d(x,y)$, which implies 
    \[
    q^n\leq d(\phi_n(x),y)=d(x,y).
    \]
    Similarly, we get $d(\phi_n(y),x)=d(x,y)$.
    
    \ref{item:phi_n-P5}. 
    If
    \begin{equation*}
      d(x,y) < \max\{ d(\phi_n(x),x), d(\phi_n(y),y)\},
    \end{equation*}
    then $\phi_n(x) = \phi_n(y)$ by \ref{item:phi_n-P2}.
    If $d(x,y) < d(\phi_n(y),y)$, we get
    \begin{equation*}
      d(\phi_n(x),x)
      =
      d(\phi_n(y),x)
      =
      \max\{d(\phi_n(y),y), d(x,y)\}
      =
      d(\phi_n(y),y),
    \end{equation*}
    and similarly $d(\phi_n(x),x) = d(\phi_n(y),y)$
    if $d(x,y) < d(x,\phi_n(x))$.
\end{proof}

For $x \in M$, we will focus on the indices $n\in \mathbb{Z}$, where the approximation $\phi_n(x)$ to $x$ changes. We define
\begin{equation*}
  I_x = \{ n \in \mathbb{Z} \colon \phi_n(x)\neq \phi_{n-1}(x) \}.
\end{equation*}

\begin{lem}\label{lem:prop_of_Ix}
  Let $x \in M$.
  The set $I_x$ has the following properties:
  \begin{enumerate}
  \item\label{item:Ix-1}
    $I_x = \emptyset$ if and only if $x = 0$;
  \item\label{item:Ix-2}
    $n \in I_x$ if and only if $q^{n} \leq d(\phi_{n-1}(x),x) < q^{n-1}$;
  \item\label{item:Ix-3}
    for $x \neq 0$, $n_1 = \min I_x$ exists and satisfies
    $q^{n_1} \leq d(x,0) < q^{n_1-1}$;
  \item\label{item:Ix-4}
    $I_x$ is a finite set if and only if $\phi_n(x) = x$
    for some $n\in \mathbb{Z}$.
  \end{enumerate}
\end{lem}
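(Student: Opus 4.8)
The plan is to prove part \ref{item:Ix-2} first, since it is the local computation that drives everything else, and then read off the global statements \ref{item:Ix-1}, \ref{item:Ix-3}, and \ref{item:Ix-4} from it together with the behaviour of $n\mapsto\phi_n(x)$ as $n\to\pm\infty$. Two elementary observations will be used throughout. First, since $0\in C_n$ for every $n$ by Lemma~\ref{lem: partitioning}\ref{item:a} and $d(0,C_n)=0$, we have $\phi_n(0)=0$ for all $n$. Second, whenever $y\in C_n$ satisfies $d(x,y)<q^n$, the uniqueness of the point of $C_n$ lying within $q^n$ of $x$ (noted right after the definition of $\phi_n$) forces $\phi_n(x)=y$.

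For \ref{item:Ix-2}, observe that $d(\phi_{n-1}(x),x)<q^{n-1}$ always holds by the definition of $\phi_{n-1}$, so only the lower bound $q^n\le d(\phi_{n-1}(x),x)$ is at issue. The key point is that $\phi_{n-1}(x)\in C_{n-1}\subset C_n$ by Lemma~\ref{lem: partitioning}\ref{item:b}. Hence if $d(\phi_{n-1}(x),x)<q^n$, the second observation gives $\phi_n(x)=\phi_{n-1}(x)$, i.e. $n\notin I_x$; conversely, if $\phi_n(x)=\phi_{n-1}(x)$ then $d(\phi_{n-1}(x),x)=d(\phi_n(x),x)<q^n$. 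Taking contrapositives yields $n\in I_x$ if and only if $q^n\le d(\phi_{n-1}(x),x)$, which together with the automatic upper bound is exactly \ref{item:Ix-2}.

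The global facts follow by controlling $\phi_n(x)$ at the two ends. For $n$ small enough that $q^n>d(x,0)$, the two observations give $\phi_n(x)=0$, so $\phi_n(x)=\phi_{n-1}(x)=0$ there; thus $I_x$ contains no sufficiently negative $n$ and is bounded below. This immediately gives the forward direction of \ref{item:Ix-1}, since $x=0$ makes $\phi_n(x)\equiv 0$. For the reverse direction, if $I_x=\emptyset$ then $\phi_n(x)$ is constant; its value is $0$ by the small-$n$ behaviour, and since $d(x,\phi_n(x))<q^n\to 0$ as $n\to\infty$ we get $d(x,0)=0$, i.e. $x=0$. For \ref{item:Ix-3}, when $x\neq 0$ the lower boundedness makes $n_1=\min I_x$ exist, and $\phi_{n_1-1}(x)=0$ because $\phi_n(x)$ is constantly $0$ for $n<n_1$; substituting $\phi_{n_1-1}(x)=0$ into \ref{item:Ix-2} gives $q^{n_1}\le d(x,0)<q^{n_1-1}$.

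Finally, for \ref{item:Ix-4} the case $x=0$ is trivial, so assume $x\neq 0$. If $\phi_N(x)=x$ for some $N$, then $x=\phi_N(x)\in C_N\subset C_n$ for all $n\ge N$ by Lemma~\ref{lem: partitioning}\ref{item:b}, so $d(x,C_n)=0$ and $\phi_n(x)=x$ for every $n\ge N$; hence $n\notin I_x$ for $n>N$, and with the lower bound from \ref{item:Ix-3} the set $I_x$ is finite. Conversely, if $I_x$ is finite then $\phi_n(x)$ is eventually constant, say equal to $c$ for $n\ge n_2:=\max I_x$, and $d(x,c)=d(x,\phi_n(x))<q^n\to 0$ forces $c=x$, so $\phi_{n_2}(x)=x$. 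The only thing needing care throughout is the bookkeeping of the $n\to\pm\infty$ limits, where the hypotheses $q\in(0,1)$ and the uniqueness clause of Lemma~\ref{lem: partitioning} do the real work; beyond assembling these observations in the right order I do not expect a serious obstacle.
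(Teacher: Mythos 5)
Your proof is correct and follows essentially the same route as the paper: part \ref{item:Ix-2} is established from the definition of $\phi_n$ together with the inclusion $C_{n-1}\subset C_n$ and the uniqueness of the nearest point of $C_n$, and parts \ref{item:Ix-1}, \ref{item:Ix-3}, and \ref{item:Ix-4} are then read off from the behaviour of $\phi_n(x)$ as $n\to\pm\infty$. The only cosmetic difference is that you argue \ref{item:Ix-2} by contraposition via uniqueness where the paper invokes the ultrametric identity $d(\phi_{n-1}(x),x)=d(\phi_{n-1}(x),\phi_n(x))\ge q^n$ directly; both rest on the same separation property of $C_n$.
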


\begin{proof}
  \ref{item:Ix-1}.
  It is clear that $\phi_n(0) = 0$ for all $n \in \mathbb{Z}$.
  If $x \neq 0$, then there exists $n_0 \in \mathbb{Z}$ such
  that $q^{n_0 + 1} \le d(x,0) < q^{n_0}$.
  Then $\phi_m(x) = 0$ for $m \le n_0$ and
  $\phi_m(x) \neq 0$ for $m > n_0$.

  \ref{item:Ix-2}.
  If $d(\phi_{n-1}(x),x) \ge q^n$, then
  $\phi_{n-1}(x) \neq \phi_n(x)$ by the definition of $\phi_n$.
  On the other hand, if $n \in I_x$, then
  $d(\phi_{n-1}(x),x) < q^{n-1}$ by the definition of $\phi_{n-1}$
  and, since $\phi_n(x), \phi_{n-1}(x) \in C_n$
  by Lemma~\ref{lem: partitioning}~\ref{item:b},
  we get
  \begin{align*}
    d(\phi_{n-1}(x),x)
    &= \max\{d(\phi_{n-1}(x),\phi_{n}(x)), d(\phi_{n}(x),x) \} \\
    &= d(\phi_{n-1}(x),\phi_{n}(x)) \ge q^n
  \end{align*}
  by Lemma~\ref{lem: partitioning}~\ref{item:c}.

  \ref{item:Ix-3}.
  Find $n_0$ as in \ref{item:Ix-1}, then use \ref{item:Ix-2} with
  $n_1 = n_0 + 1$ and note that
  $\phi_{n_1 - 1}(x) = \phi_{n_0}(x) = 0$.

  \ref{item:Ix-4}.
  That $I_x$ is finite means that there exists
  $n \in \mathbb{Z}$ such that $\phi_m(x) = \phi_n(x)$
  for all $m \ge n$. This means that
  $d(x,\phi_n(x)) = d(x,\phi_m(x)) < q^m$
  for all $m \ge n$, that is $x = \phi_n(x)$.
\end{proof}

If $x \neq 0$, then
$\min I_x$ exists by Lemma~\ref{lem:prop_of_Ix}, 
and we can assume that
$I_x = \{n_1,n_2,\ldots\}$ is ordered so that
$n_1 < n_2 < \cdots < n_k < \cdots$,
and we denote $i_x(k) = n_k$, the $k$-th smallest number in $I_x$.
If we let $n_0 = n_1 - 1$ and $i_x(0) = n_0$,
we have $\phi_{i_x(0)}(x) = 0$.
For $x = 0$ we can just let $n_0 = 0$ and $i_0(0) = 0$.
This means that for any $x \in M$, we have a well-defined
sequence of elements given by
\begin{equation*}
  \mathfrak{s}(x) :=
  ( \phi_{n_0}(x), \phi_{n_1}(x), \ldots )
  =
  \{ \phi_{i_x(k)}(x)\}_{k=0}^{|I_x|}.
\end{equation*}
We have $\mathfrak{s}(0) = \{0\}$ and for $x \neq 0$,
we have $d(\phi_{n_k}(x),x)<d(\phi_{n_l}(x),x)$ if $0\leq l<k\leq |I_x|$.
Essentially, $\mathfrak{s}(x)$ captures all 
the different values of the sequence 
$(\phi_n(x))_{n\in \mathbb{Z}}$ without repetition.
If $|I_x| < \infty$, then for all $m\geq \max I_x$,
we have $x=\phi_m(x)\in C_m$.
If $I_x$ is an infinite set, then
$\lim_{k} \phi_{n_k}(x) = x$.
Let us also note that
for $0 \le k < |I_x|$,
\begin{equation}
  \label{eq:s_xk}
  \mathfrak{s}(\phi_{i_x(k)}(x))
  = \{\phi_{i_x(l)}(x) \}_{l=0}^k,
\end{equation}
by Lemma~\ref{lem: ultrametric phi_n properties} \ref{item:phi_n-P1}.

\begin{lem}\label{lem: x geometric}
  Let $x \in M$ and denote $(x_k)_{k=0}^{|I_x|} = \mathfrak{s}(x)$.
  Then for all $0 \le K < |I_x|$, we have
  \begin{equation*}
    \sum_{l=K}^{|I_x|} d(x_{l},x)
    \le \frac{1}{q(1-q)} d(x_{K},x).
  \end{equation*}
\end{lem}
  \begin{proof}
    Let $m_x=|I_x|$ and let $0\leq K< m_x$.
    Note that if $I_x$ is finite, 
    then $x_{m_x}=x$. Therefore, we have
  \begin{align*}
    \sum_{l=K}^{m_x} d(x_{l},x)
    &=
      \sum_{l=K}^{m_x-1} d(\phi_{i_x(l)}(x),x)
      =
      \sum_{l=K}^{m_x-1} d(\phi_{i_x(l+1)-1}(x),x) \\
    &<
      \sum_{l=K}^{m_x-1} q^{i_x(l+1)-1}
      \le
      \sum_{l=0}^{\infty} q^{i_x(K+1)+ l-1}
      =
      \frac{q^{i_x(K+1)}}{q(1 - q)}
    \\
    &\le
      \frac{1}{q(1-q)}d(\phi_{i_x(K+1)-1}(x),x)
      =
      \frac{1}{q(1-q)}d(x_{K},x),
  \end{align*}
  where we used Lemma~\ref{lem:prop_of_Ix}~\ref{item:Ix-2}
  for the first and last inequality.
  \end{proof}

\begin{lem}\label{lem: strongly extreme, first K}
  Let $x,y \in M$, $x\neq y$, and let $(x_k)_{k=0}^{|I_x|}=\mathfrak{s}(x)$, $(y_k)_{k=0}^{|I_y|} = \mathfrak{s}(y)$.
  There exists $K \in \mathbb{N}\cup\{0\}$ such that
  \begin{equation*}
    \max\{d(x_{K},x), d(y_{K},y)\} \le d(x,y),
  \end{equation*} 
  and $x_k = y_k$ and
  $d(x_k, x) = d(y_k, y)$ for all $k < K$.
\end{lem}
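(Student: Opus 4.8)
The plan is to locate the exact ``level'' $n^*$ at which the approximating sequences of $x$ and $y$ first split, to translate this into a common prefix of $\mathfrak{s}(x)$ and $\mathfrak{s}(y)$, and then to take $K$ to be the index of the split (pushed one step further in a degenerate case).

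First I would analyse the set $A=\{n\in\mathbb{Z}:\phi_n(x)=\phi_n(y)\}$. Since $C_n\subset C_{n+1}$ gives $d(\phi_{n+1}(x),x)\le d(\phi_n(x),x)$ (and similarly for $y$), Lemma~\ref{lem: ultrametric phi_n properties}~\ref{item:phi_n-P2} shows that once $\phi_n(x)\neq\phi_n(y)$ the two stay apart for all larger indices; hence $A$ is a down-set. As $\phi_n(x)=0=\phi_n(y)$ for all sufficiently negative $n$ (Lemma~\ref{lem:prop_of_Ix}~\ref{item:Ix-1}) and $A\neq\mathbb{Z}$ (otherwise $d(\phi_n(x),x)\to0$ and $d(\phi_n(x),y)\to0$ would force $x=y$), the maximum $n^*:=\max A$ exists. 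Put $p=\phi_{n^*}(x)=\phi_{n^*}(y)$; applying Lemma~\ref{lem: ultrametric phi_n properties}~\ref{item:phi_n-P2} at $n^*$ yields the crucial bound $d(x,y)\le\max\{d(p,x),d(p,y)\}<q^{n^*}$.

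Next I would identify the common prefix. Since $p$ is one of the (pairwise distinct) entries of $\mathfrak{s}(x)$, write $p=x_j$; the relation \eqref{eq:s_xk} gives $\mathfrak{s}(p)=(x_0,\dots,x_j)$, and likewise $\mathfrak{s}(p)=(y_0,\dots,y_{j'})$, forcing $j=j'$ and $x_k=y_k$ for all $k\le j$. To obtain $d(x_k,x)=d(y_k,y)$ for $k<j$, I would note that for such $k$ Lemma~\ref{lem:prop_of_Ix}~\ref{item:Ix-2} gives $d(x_k,x)\ge q^{i_x(k+1)}\ge q^{n^*}>d(x,y)$, and symmetrically $d(y_k,y)>d(x,y)$; since $x_k=y_k$, if these two distances differed then Lemma~\ref{lem: prop_ultram}~\ref{item:1} (with common point $x_k=y_k$) would force $d(x,y)$ to equal their strictly larger maximum, a contradiction.

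Finally I would choose $K$. If $d(x,y)=\max\{d(p,x),d(p,y)\}$ I take $K=j$: properties (ii) and (iii) for $k<j$ are already in hand and $\max\{d(x_j,x),d(y_j,y)\}=\max\{d(p,x),d(p,y)\}=d(x,y)$. The remaining, and subtlest, case is $d(x,y)<\max\{d(p,x),d(p,y)\}$: here Lemma~\ref{lem: ultrametric phi_n properties}~\ref{item:phi_n-P5} at level $n^*$ gives $d(p,x)=d(p,y)$, and since this common value exceeds $d(x,y)\ge q^{n^*+1}$ (the latter because $\phi_{n^*+1}(x)\neq\phi_{n^*+1}(y)$ places $x,y$ in distinct balls of radius $q^{n^*+1}$), Lemma~\ref{lem:prop_of_Ix}~\ref{item:Ix-2} yields $n^*+1\in I_x\cap I_y$, so $x_{j+1}=\phi_{n^*+1}(x)$ and $y_{j+1}=\phi_{n^*+1}(y)$ exist with $d(x_{j+1},x),d(y_{j+1},y)<q^{n^*+1}\le d(x,y)$. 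Taking $K=j+1$ then works, the additional equality $d(x_j,x)=d(p,x)=d(p,y)=d(y_j,y)$ at index $j$ being exactly what the previous line supplies. I expect this degenerate case---recognising via Lemma~\ref{lem: ultrametric phi_n properties}~\ref{item:phi_n-P5} that $d(p,x)=d(p,y)$ and that both sequences must take a further step at level $n^*+1$---to be the main obstacle.
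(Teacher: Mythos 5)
Your proof is correct, but it pivots on a different threshold than the paper's. You take $n^{*}=\max\{n:\phi_n(x)=\phi_n(y)\}$ (the last level of agreement), whereas the paper takes $N=\min\{n:d(\phi_n(x),x)\le d(x,y)\}$ (the first level at which the approximant of $x$ gets within $d(x,y)$ of $x$), shows via Lemma~\ref{lem: ultrametric phi_n properties}~\ref{item:phi_n-P3} that the same $N$ works for $y$, deduces $N\in I_x\cap I_y$ with the same ordinal position, and sets $i_x(K)=i_y(K)=N$; the only case split there is whether this minimum exists (if not, $K=0$ works). These two thresholds differ precisely when $\max\{d(\phi_n(x),x),d(\phi_n(y),y)\}$ equals $d(x,y)$ on some range of levels, which is exactly what forces your two-case ending: your Case~A lands on $K=j$ with equality, while Case~B must push to level $n^{*}+1$, using Lemma~\ref{lem: ultrametric phi_n properties}~\ref{item:phi_n-P4} and Lemma~\ref{lem:prop_of_Ix}~\ref{item:Ix-2} to certify that $n^{*}+1\in I_x\cap I_y$ so that $x_{j+1},y_{j+1}$ exist. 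You also lean on \eqref{eq:s_xk} (which the paper states without proof) to align the indices $j=j'$, and on Lemma~\ref{lem: prop_ultram}~\ref{item:1} to get $d(x_k,x)=d(y_k,y)$ for $k<j$, where the paper gets both facts in one stroke from Lemma~\ref{lem: ultrametric phi_n properties}~\ref{item:phi_n-P2} and~\ref{item:phi_n-P5} applied at all $n<N$. The trade-off: your argument makes the geometric picture (common prefix of the approximating chains) more explicit, at the cost of an extra case and a dependence on \eqref{eq:s_xk}; the paper's choice of threshold absorbs the equality case automatically and is shorter. Both are valid, and the lemma only asserts existence of some $K$, so it does not matter that the two arguments can produce different values of $K$.
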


\begin{proof}
  If $d(\phi_n(x),x)\leq d(x,y)$ for all $n\in \mathbb{Z}$, then
  $d(\phi_n(y),y)\leq d(x,y)$ for all $n\in \mathbb{Z}$ by Lemma~\ref{lem: ultrametric phi_n properties}~\ref{item:phi_n-P3}. Taking $K=0$ and $n\in \mathbb{Z}$ such that $\phi_n(x)=\phi_n(y)=0$, we get
  \[
  \max\{d(x_{K},x),d(y_{K},y)\}=\max\{d(\phi_n(x),x),d(\phi_n(y),y)\}\leq d(x,y).
  \]
  Now assume that there exists
  \begin{equation*}
    N := \min\{ n\in \mathbb{Z}\colon d(\phi_n(x),x)\leq d(x,y)\}.
  \end{equation*}
  By Lemma~\ref{lem: ultrametric phi_n properties}~\ref{item:phi_n-P3},
  we have
  \begin{equation*}
    N = \min\{n\in \mathbb{Z}\colon d(\phi_n(y),y)\leq d(x,y)\}.
  \end{equation*}
  By Lemma~\ref{lem: ultrametric phi_n properties}~\ref{item:phi_n-P2} and~\ref{item:phi_n-P5}, 
  we have $\phi_n(x)=\phi_n(y)$ and $d(\phi_n(x),x)=d(\phi_n(y),y)$ for all $n<N$. 
  Take $K\in \mathbb{N}$ so that $i_x(K)=i_y(K)=N$.
  Then for all $k< K$, we have $x_k=y_k$ and $d(x_k,x)=d(y_k,y)$.
  Furthermore,
  \[
  \max\{ d(x_{K},x),d(y_{K},y)\}= \max\{ d(\phi_{N}(x),x),d(\phi_{N}(y),y)\}\leq d(x,y).
  \qedhere
  \]
  \end{proof}
  
  \begin{lem}\label{lem: strongly extreme, second K}
  Let $x,y \in M$, $x\neq y$, and let $(x_k)_{k=0}^{|I_x|}=\mathfrak{s}(x)$, $(y_k)_{k=0}^{|I_y|} = \mathfrak{s}(y)$.
  There exists $ K \in \mathbb{N}\cup\{0\}$
  such that
  \begin{equation*}
    \max\{d(x_{K},x), d(y_{K},y)\} \le \frac{d(x,y)}{q}
  \end{equation*} 
  and $x_k = y_k$ for all $k \le K$.
  \end{lem}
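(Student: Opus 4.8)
The plan is to derive this lemma directly from the preceding Lemma~\ref{lem: strongly extreme, first K} by a short case analysis. First I would invoke that lemma to obtain an index $K_0 \in \mathbb{N}\cup\{0\}$ with $\max\{d(x_{K_0},x),d(y_{K_0},y)\} \le d(x,y)$ and $x_k = y_k$ for all $k < K_0$. The only gap between this and the present statement is the endpoint $k = K_0$: here we require $x_k = y_k$ for all $k \le K$, so I must decide whether $x_{K_0}$ and $y_{K_0}$ already coincide.

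If $x_{K_0} = y_{K_0}$, then $K = K_0$ works immediately: the equality $x_k = y_k$ now holds for all $k \le K_0$, and the distance bound is even stronger than demanded, since $d(x,y) \le d(x,y)/q$ as $q \in (0,1)$. Note that this case subsumes the degenerate situation of the first lemma (when no index $N$ exists), where $K_0 = 0$ and $x_0 = y_0 = 0$.

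The substantive case is $x_{K_0} \neq y_{K_0}$, where I would instead set $K = K_0 - 1$. I first observe that $K_0 \ge 1$ here, because $x_0 = y_0 = 0$ always, so $K_0 = 0$ would force $x_{K_0} = y_{K_0}$. Writing $N = i_x(K_0) = i_y(K_0)$ for the common index produced by the first lemma, we have $x_{K_0} = \phi_N(x) \neq \phi_N(y) = y_{K_0}$. The key step is to apply Lemma~\ref{lem: ultrametric phi_n properties}~\ref{item:phi_n-P4}: since $\phi_N(x)\neq\phi_N(y)$, it yields $d(x,y) \ge q^{N}$. Meanwhile, because $\phi_n(x)$ is constant for $i_x(K_0-1)\le n < N$, we have $x_{K_0-1} = \phi_{N-1}(x)$, and Lemma~\ref{lem:prop_of_Ix}~\ref{item:Ix-2} (using $N\in I_x$) gives $d(\phi_{N-1}(x),x) < q^{N-1} = q^N/q \le d(x,y)/q$. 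The same estimate holds for $y$, so $\max\{d(x_{K_0-1},x),d(y_{K_0-1},y)\} \le d(x,y)/q$, while $x_k = y_k$ for $k \le K_0 - 1$ is inherited from the first lemma.

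The main obstacle is recognising that the weaker distance bound $d(x,y)/q$ is precisely what is needed to "back up one level" from $K_0$ to $K_0-1$ when the approximations first disagree: property~\ref{item:phi_n-P4} forces $d(x,y)$ to be at least $q^N$, which is exactly the slack that converts $d(\phi_{N-1}(x),x) < q^{N-1}$ into $d(x,y)/q$. Once this is seen, the remaining points (that $K_0 \ge 1$ and that $x_{K_0-1} = \phi_{N-1}(x)$) are routine bookkeeping about the indexing of $\mathfrak{s}(x)$.
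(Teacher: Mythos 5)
Your argument is correct in substance but takes a different route from the paper's: the paper does not pass through Lemma~\ref{lem: strongly extreme, first K} at all. Instead it sets $N=\min\{n\in\mathbb{Z}\colon \phi_n(x)\neq\phi_n(y)\}$ directly, notes that then $N\in I_x$ or $N\in I_y$ (say $N=i_x(K)$), and runs exactly the estimate you describe: $x_{K-1}=\phi_{N-1}(x)=\phi_{N-1}(y)=y_{K-1}$, $d(\phi_{N-1}(x),x)<q^{N-1}$, and $d(x,y)\geq q^{N}$ from Lemma~\ref{lem: ultrametric phi_n properties}~\ref{item:phi_n-P4}. So the core mechanism --- property \ref{item:phi_n-P4} supplying $d(x,y)\geq q^{N}$ at the first level where the approximations disagree, traded against $d(\phi_{N-1}(\cdot),\cdot)<q^{N-1}$ --- is the same in both proofs; what differs is how the level $N$ is located. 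Your version buys a shorter argument by recycling the earlier lemma at the cost of a case split; the paper's version is self-contained.

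One step needs a word of justification: you introduce $N=i_x(K_0)=i_y(K_0)$ as ``the common index produced by the first lemma,'' but the \emph{statement} of Lemma~\ref{lem: strongly extreme, first K} only hands you $K_0$; the equality $i_x(K_0)=i_y(K_0)$ is visible inside its proof, which you should not be quoting. It is, however, derivable from the statement alone. In your second case $K_0\geq 1$, so the first lemma gives $x_{K_0-1}=y_{K_0-1}$ and $d(x_{K_0-1},x)=d(y_{K_0-1},y)$. Writing $N_x=i_x(K_0)$ and $N_y=i_y(K_0)$, we have $x_{K_0-1}=\phi_{N_x-1}(x)$ and $y_{K_0-1}=\phi_{N_y-1}(y)$, so Lemma~\ref{lem:prop_of_Ix}~\ref{item:Ix-2} yields $q^{N_x}\leq d(x_{K_0-1},x)<q^{N_x-1}$ and $q^{N_y}\leq d(y_{K_0-1},y)<q^{N_y-1}$; since the intervals $[q^{n},q^{n-1})$, $n\in\mathbb{Z}$, are pairwise disjoint, this forces $N_x=N_y$. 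With that line inserted, your case analysis goes through as written.
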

  \begin{proof}
  Let $N=\min\{n\in \mathbb{Z}\colon \phi_n(x)\neq \phi_n(y)\}$.
  By Lemma~\ref{lem: ultrametric phi_n properties}~\ref{item:phi_n-P2},
  \[
  \max\{d(\phi_{N}(x),x),d(\phi_{N}(y),y)\}< d(x,y).
  \]
  Since $\phi_{N-1}(x)=\phi_{N-1}(y)$, then $\phi_{N}(x)\neq
  \phi_{N-1}(x)$ or $\phi_{N}(y)\neq \phi_{N-1}(y)$.
  Assume that $\phi_{N}(x)\neq \phi_{N-1}(x)$. Then there exists
  $K\in \mathbb{N}$ such that $i_x(K)=N$.
  We have $x_{K-1}=\phi_{N-1}(x)=\phi_{N-1}(y)=y_{K-1}$. Therefore,
  \[
  d(x_{K-1},x)=d(\phi_{N-1}(x),x)< q^{N-1}
  \]
  and
  \[
  d(y_{K-1},y)=d(\phi_{N-1}(y),y)< q^{N-1}.
  \]
  By Lemma~\ref{lem: ultrametric phi_n properties}~\ref{item:phi_n-P4},
  $d(x,y)\geq q^{N}$, so
  \[
    \max\{d(x_{K-1},x), d(y_{K-1},y)\}< q^{N-1}\leq q^{-1}d(x,y).
    \qedhere
  \]
\end{proof}

\begin{proof}[Proof of Theorem~\ref{thm:strongly_extreme_and_LASQ}]
  We will define $f\in \Lip_0(M)$.
  For $x \in M$ with $x \neq 0$,
  denote $(x_k)_{k=0}^{|I_x|} = \mathfrak{s}(x)$,
  and define
  \begin{equation*}
    f(x) = \sum_{k=0}^{m_x} (-1)^{k} d(x_k,x).
  \end{equation*}
  Note that $0 \le f(x) \le d(x_0,x) = d(0,x)$.

  If $x \in M$, $x \neq 0$, and $0 \le k < m_x$,
  then, using \eqref{eq:s_xk} we have
  \[
  f(x_{k})=\sum_{l=0}^{k-1} (-1)^l d(x_l,x_k)
  \quad \text{and}\quad 
  f(x_{k+1})=\sum_{l=0}^{k} (-1)^l d(x_l,x_{k+1}).
  \]
  For all $l< k$, we have
  \[
  d(x_l,x)>d(x_k,x)>d(x_{k+1},x),
  \]
  so $d(x_l,x_k)=d(x_l,x)=d(x_l, x_{k+1})$.
  Therefore,
  \begin{equation*}
    |f(x_{k+1}) - f(x_k)| = d(x_{k+1},x_k).
  \end{equation*}
  Thus $\|f\| \ge 1$.
  Next we will show that $f\in S_{\Lip_0(M)}$.

  Let $x,y \in M$ with $x\neq y$.
  By Lemma~\ref{lem: strongly extreme, first K}, there exists $K\in \mathbb{N}\cup\{0\}$
  such that $x_k=y_k$ and $d(x_{k},x) = d(y_{k},y)$ for $k < K$,
  and
  \begin{equation*}
    \max\{ d(x_{K},x), d(y_{K},y) \} \le d(x,y).
  \end{equation*}
  We get cancellation in the sums defining $f(x)$ and $f(y)$, giving us
  \begin{align*}
    |f(x)-f(y)|
    &=
      \left|\sum_{l=K}^{m_x} (-1)^{l}d(x_l,x)
      - \sum_{l=K}^{m_y} (-1)^{l} d(y_l,y)\right| \\
    &\le
      \max\{d(x_K,x),d(y_K,y)\}\leq d(x,y),
  \end{align*}
  where the first inequality comes from the fact
  that the first sum is in the interval
  $[0,d(x_{K},x)]$ and the second is in
  $[0,d(y_{K},y)]$ (and we may assume that both
  sums are positive by factoring out $(-1)$).

  Now let $g\in \Lip_0(M)$, $\varepsilon > 0$, and assume
  that $\|f \pm g\| \le 1 + \varepsilon$.

  Let $x \neq 0$ and let $(x_k)_{k=0}^{|I_x|}=\mathfrak{s}(x)$.
  Then we can write $g(x)$ as a telescoping sum
  \begin{align*}
    g(x)&=
          \lim_{k \to m_x} g(x_{k})
          =
          \sum_{k=0}^{m_x-1} (g(x_{k+1})-g(x_{k})).
  \end{align*}
  For all $k$, we have
  \begin{align*}
    |f(x_{k+1})-f(x_{k})| + |g(x_{k+1})-g(x_{k})|
    \leq (1+\varepsilon)d(x_{k},x_{k+1}).
  \end{align*}
  Therefore, and since
  $d(x_{k},x_{k+1}) = d(x_{k},x)$,
  \begin{align*}
    |g(x_{k+1})-g(x_{k})|
    &\leq
      (1+\varepsilon)d(x_{k},x_{k+1})-|f(x_{k+1})-f(x_{k})|\\
    &=
      (1+\varepsilon)d(x_{k},x_{k+1})-d(x_{k},x_{k+1})\\
    &=
      \varepsilon d(x_{k},x_{k+1})\\
    &=
      \varepsilon d(x_{k},x).
  \end{align*}
  By Lemma~\ref{lem: x geometric}, we get
  \begin{align*}
    |g(x)|
    &\leq
      \sum_{k=0}^{m_x-1} |g(x_{k+1})-g(x_{k})|
      \leq
      \sum_{k=0}^{m_x} \varepsilon d(x_{k},x)
      \leq
      \frac{\varepsilon}{q(1-q)}d(0,x).
  \end{align*}
  
  If $y=0$, then we have
  \[
  |g(x)-g(y)|=|g(x)|\leq \frac{\varepsilon}{q(1-q)}d(x,y).
  \]
  Assume that $y \in M\setminus\{0,x\}$ and let $(y_k)_{k=0}^{|I_y|}=\mathfrak{s}(y)$.
  Use Lemma~\ref{lem: strongly extreme, second K} to
  find $K\in \mathbb{N}\cup\{0\}$ such that
  $x_{k} = y_{k}$ for $k\leq K$ and
  \begin{equation*}
    \max\{d(x_{K},x), d(y_{K},y)\}
    \le q^{-1}d(x,y),
  \end{equation*}
  By Lemma~\ref{lem: x geometric},
  \begin{align*}
    |\sum_{l=K}^{m_x-1} g(x_{l+1})-g(x_{l})|
    &\le
      \sum_{l=K}^{m_x-1} \varepsilon d(x_{l},x) 
      \le
      \frac{\varepsilon}{q(1-q)} d(x_{K},x)\\
      &<
    \frac{\varepsilon}{q^2(1-q)}d(x,y)
  \end{align*}
  and similarly for $y$.
  Since $x_{k} = y_{k}$ for $k \leq K$, we get
  \begin{align*}
    |g(x)-g(y)|
    &=
      |\sum_{l=K}^{m_x-1}(g(x_{l+1})-g(x_{l}))
      - \sum_{l=K}^{m_y-1}(g(y_{l+1})-g(y_{l}))|\\
    &\leq
      |\sum_{l=K}^{m_x-1}g(x_{l+1})-g(x_{l})|
      +|\sum_{l=K}^{m_y-1}g(y_{l+1})-g(y_{l})|\\
    &<
      \frac{2\varepsilon}{q^2(1-q)}d(x,y).
  \end{align*}
  This shows that if $\|f \pm g_n\| \to 1$
  for $(g_n)$ in $B_{\Lip_0(M)}$,
  then $\|g_n\| \to 0$, i.e. $f$ is a strongly extreme point.
\end{proof}

\providecommand{\bysame}{\leavevmode\hbox to3em{\hrulefill}\thinspace}
\providecommand{\MR}{\relax\ifhmode\unskip\space\fi MR }
% \MRhref is called by the amsart/book/proc definition of \MR.
\providecommand{\MRhref}[2]{%
  \href{http://www.ams.org/mathscinet-getitem?mr=#1}{#2}
}
\providecommand{\href}[2]{#2}

\end{document}